\numberwithin{equation}{section}
\numberwithin{figure}{section}
\theoremstyle{plain}
\newtheorem{thm}{\protect\theoremname}[section]
  \theoremstyle{plain}
  \newtheorem{lem}[thm]{\protect\lemmaname}
  \theoremstyle{remark}
  \newtheorem{rem}[thm]{\protect\remarkname}
  \theoremstyle{plain}
  \newtheorem{cor}[thm]{\protect\corollaryname}
\DeclareMathAlphabet{\mathcal}{OMS}{cmsy}{m}{n}
  \providecommand{\corollaryname}{Corollary}
  \providecommand{\lemmaname}{Lemma}
  \providecommand{\remarkname}{Remark}
\providecommand{\theoremname}{Theorem}
\begin{document}
\global\long\def\fracap{\genfrac(){0.5pt}{1}{\alpha}{p}}

\global\long\def\lchoosej{\genfrac(){0pt}{1}{l}{j}}

\title{Fine properties of fractional Brownian motions \\ on Wiener space}

\author{Jiawei Li\thanks{Mathematical Institute, University of Oxford, Oxford OX2 6GG. Email:
\protect\href{mailto:jiawei.li@maths.ox.ac.uk}{jiawei.li@maths.ox.ac.uk}} \ and Zhongmin Qian\thanks{Mathematical Institute, University of Oxford, Oxford OX2 6GG. Email:
\protect\href{mailto:zhongmin.qian@maths.ox.ac.uk}{zhongmin.qian@maths.ox.ac.uk}}}
\maketitle
\begin{abstract}
We study several important fine properties for the family of fractional
Brownian motions with Hurst parameter $H$ under the $(p,r)$-capacity
on classical Wiener space introduced by Malliavin. We regard fractional
Brownian motions as Wiener functionals via the integral representation
discovered by Decreusefond and \"{U}st\"{u}nel, and show non differentiability,
modulus of continuity, law of iterated Logarithm(LIL) and self-avoiding
properties of fractional Brownian motion sample paths using Malliavin
calculus as well as the tools developed in the previous work by Fukushima,
Takeda and etc. for Brownian motion case.
\end{abstract}
\selectlanguage{english}%
\textit{MSC: Primary 60G17; Secondary 60H07}

\medskip

\noindent \emph{Keywords}: capacity, fractional Brownian motion, Malliavin
derivative, sample property
\selectlanguage{british}%

\section{Introduction}

Fractional Brownian motions (fBMs for simplicity), as archetypical
examples of Gaussian processes have attracted researchers in recent
years. The stochastic calculus and sample path properties for them
are mainly studied in the setting of Gaussian measures (the Malliavin
calculus for example) and Gaussian processes. In this article, we
explore the fine properties of fBMs as measurable functions on the
Wiener space. By fine properties here we mean those sample properties
which are measured uniformly by the capacities associated with the
classical Wiener space.

Recall that an fBM, $(B_{t})_{t\geq0}$ with Hurst parameter $H\in(0,1)$
is, by definition, a centred Gaussian process with its co-variance
function given by
\[
R(t,s)=\mathbb{E}\left[B_{t}B_{s}\right]=\frac{1}{2}\left(t^{2H}+s^{2H}-|t-s|^{2H}\right)
\]
for $s,t\geq0$. FBMs were firstly introduced by Kolmogorov \citep{Kolmogorov1940b}
in early 1940s, which were named as fractional Brownian motion by
Mandelbrot and Van Ness \citep{Mandelbrot1968} in 1968. An integral
representation for fBM with Hurst parameter $H$ was discovered in
\citep{Mandelbrot1968}, which is given by 
\[
B_{t}=\frac{1}{\sqrt{C(H)}}\left\{ \int_{-\infty}^{0}\left[(t-s)^{H-\frac{1}{2}}-(-s)^{H-\frac{1}{2}}\right]dW_{s}+\int_{0}^{t}(t-s)^{H-\frac{1}{2}}dW_{s}\right\} ,
\]
where $(W_{t})$ is a standard two-sided Brownian motion, and 
\[
C(H)=\int_{-\infty}^{0}\left[(1-s)^{H-\frac{1}{2}}-(-s)^{H-\frac{1}{2}}\right]^{2}ds+\frac{1}{2H}.
\]

The sample paths properties of fBMs, like all other aspects of their
laws, depend crucially on the Hurst parameter $H$. FBM with Hurst
parameter $H=\frac{1}{2}$ is just a standard Brownian motion. The
study of sample paths of Brownian motion has been one of the primary
components in stochastic analysis, see e.g. It\^{o}-McKean \citep{Ito1974},
Karatzas-Shreve \citep{Karatzas2012}, Revuz-Yor \citep{Revuz2013}
and other excellent references there-in. FBMs have stationary increments,
unlike Brownian motion however, the increments of fBMs are no longer
independent in the case where $H\neq\frac{1}{2}$. If $H>\frac{1}{2}$,
the increments over different time intervals are positively correlated,
while for $H<\frac{1}{2}$, the increments are negatively correlated.
fBMs are self-similar Gaussian processes with long time memory if
$H\neq\frac{1}{2}$, which are neither Markov processes, nor semi-martingales.
Decreusefond and \"{U}st\"{u}nel \citep{Decreusefond1999} identified
the Cameron-Martin spaces of fBMs, and deduced another form of representations
for fBMs, in terms of Wiener integrals with respect to Brownian motion,
and thus realised fBMs as measurable functionals of Brownian motion.
FBMs are examples of Wiener functionals which are not solutions to
It\^{o}'s stochastic differential equations.  The advantage of considering
fBMs as Wiener functionals lies in the fact that one may derive results
for fBMs with different Hurst parameters in terms of concepts defined
by Brownian motion, such as capacities. In this paper we derive several
sample properties of fBMs with respect to the capacities defined on
the classical Wiener space by the standard Brownian motion, rather
than on different Gaussian spaces induced by fBMs with different Hurst
parameters. We prove a few interesting fine properties for the family
of fBMs with respect to the $(p,r)$-capacity defined in the sense
of Malliavin \citep{Malliavin1984} on the classical Wiener space.
To be more specific, we will study non-differentiability, modulus
of continuity, law of iterated logarithm and self-intersection of
fBMs measured by capacities on the classical Wiener space. These sample
path properties have been investigated over past few decades, for
both Brownian motion and fBMs, even for general Gaussian processes,
under both probability and $(p,r)$-capacity, see for example \citep{Cohen2013,Karatzas2012,Revuz2013}.
There is a huge amount of literature on this aspect. Paley, Wiener
and Zygmund \citep{Paley1933} showed the almost everywhere non-differentiability
of Brownian motion sample paths (see also the argument by Dvoretzky,
Erd\H{o}s and Kakutani in \citep{Dvoretzky1961}), and Mandelbrot
and Van Ness \citep{Mandelbrot1968} proved that fBM sample paths
are also non-differentiable almost surely. For the modulus of continuity,
L\'{e}vy \citep{Levy1937} established the result on H\"{o}lder
continuity for Brownian motion. In \citep{Decreusefond1999}, it was
shown by Decreusefond and \"{U}st\"{u}nel that sample paths of fBM
with Hurst parameter $H$ are almost surely H\"{o}lder continuous
only of order less than $H$. Khintchine \citep{Khintchine1933a}
extended the law of iterated logarithm from the case of random walk
to Brownian motion. In \citep{Coutin2007}, Coutin \citep{Coutin2007}
mentioned the following result on the law of iterated logarithm for
fBM
\begin{equation}
\limsup_{\varepsilon\to0^{+}}\frac{B_{t+\varepsilon}-B_{t}}{\sqrt{2\varepsilon^{2H}\log\log(1/\varepsilon)}}=1,\quad\text{a.s.}\label{eq:fBM_LIL_prob}
\end{equation}
while, to the best knowledge of the present authors, a written proof
doesn't exist for the case that $H<\frac{1}{2}$, but see e.g. \citep{Arcones1995}
for the functional version of the law of iterated logarithm for Gaussian
processes. For the case that $H\in(0,\frac{1}{2}]$, this was established
in Cohen and Istas \citep{Cohen2013}. Whether a sample path of one
stochastic process intersects itself has been an appealing problem
due to its connection with statistical field theory (see e.g. Itzykson-Drouffe
\citep{Itzykson1989}). It dates back to 1944 when Kakutani \citep{Kakutani1944}
answered this question for Brownian motion. He demonstrated that $d$-dimensional
Brownian motion is self-avoiding when $d\geq5$, and his solution
was accomplished in his joint work with Dvoretzky and Erd\H{o}s \citep{Dvoretzky1950}
showing that $d=4$ is the optimal dimensional for this property.
One can show that, when $d>\frac{2}{H}$, with probability one $(B_{t})_{t\geq0}$
has no double point almost surely by using the classical argument
see e.g. Kakutani \citep{Kakutani1944}. There is little information
on the optimal dimension for self-avoiding property for fractional
Brownian motion case due to the loss of potential theory. In early
1980s, Fukushima \citep{Fukushima1980} introduced the capacity defined
via Dirichlet forms, which is equivalent to $(p,r)$-capacity given
by Malliavin \citep{Malliavin1984} with $r=1$ and $p=2$, and proved
all above sample path properties for Brownian motion with respect
to this capacity. Malliavin \citep{Malliavin1984} introduced the
$(p,r)$-capacity defined via Malliavin derivatives for subsets of
the Wiener space, and Takeda \citep{Takeda1984} extended Fukushima's
result for Brownian motion to the case of $(p,r)$-capacity. Fukushima
\citep{Fukushima1984} also showed the absence of double points under
$(2,1)$-capacity for $d$-dimensional Brownian motion when $d\geq7$,
and later Lyons \citep{Lyons1986} determined the critical dimension
$d=6$ for the absence of double points, by using potential theory
of Brownian motion. Inspired by the argument in \citep{Fukushima1984}
and \citep{Takeda1984}, we will derive similar results for the family
of fBMs with different Hurst parameters $H$ with respect to one uniform
capacity on classical Wiener space. These results describe better
the behaviour of sample paths for fBM as they remain true when $H$
varies. 

The quasi-sure analysis, initiated and created mainly by Malliavin
(see e.g. \citep{Malliavin1978,Malliavin1978a,Malliavin1984,Malliavin2015}),
Fukushima, Watanabe and etc. \citep{Fukushima1980,Fukushima1984,Fukushima1993,Watanabe1984},
is the research area whose main feature is to study various Wiener
functionals (whose laws are typically mutually singular such as Brownian
motion and Brownian bridge). In the past, the majority of Wiener functionals
considered in literature are the solutions of It\^{o}'s stochastic
differential equations, for which It\^{o}'s stochastic calculus and
the potential theory for diffusion processes may be utilised to study
their fine properties. In this article, we take the point-view that
fBMs are typical Wiener functionals, i.e. measurable functionals of
Brownian motion, for which traditional tools such as Markovian or
It\^{o}'s calculus are no longer applicable. In order to derive sample
properties of fBMs in terms of capacities of Brownian motion, we employ
the basic techniques developed by Malliavin, Fukushima, Takeda and
etc. during last decades and adopted their fundamental ideas to our
study. While we have to overcome several difficulties, which were
mainly achieved by carefully controlling the Malliavin derivatives
of fBMs. 

The paper is organised as the following. In Section 2, we introduce
definitions and notations related to classical Wiener capacities and
fractional Brownian motion. In section 3, we establish the modulus
of continuity result following the argument by Fukushima \citep{Fukushima1984},
and hence deduce the quasi-surely H\"{o}lder continuity of fBMs regarded
as Wiener functionals. This allows us to take a continuous modifications
of fBMs and prove non-differentiability in section 4 based on the
argument by Dvoretzky, Erd\H{o}s and Kakutani in \citep{Dvoretzky1961},
as well as the law of iterated logarithm (LIL) when $p=2$ and $r=1$
with restriction $H\leq\frac{1}{2}$ in section 5. Finally, in section
6, we prove the self-avoiding property of $d$-dimensional fBMs under
$c_{2,1}$ when $d>\frac{2}{H}+2$ and $H\leq\frac{1}{2}$. 

\section{Wiener functionals}

The Wiener measure is by definition the distribution of Brownian motion,
which defines in turn the Wiener space, a convenient framework for
the study of Wiener functionals (see e.g. Chapter V Section 8, Ikeda
and Watanabe\citep{Ikeda2014}). Let $\boldsymbol{W}_{0}^{d}$ denote
the space of all continuous paths in the Euclidean space $\mathbb{R}^{d}$,
started at the origin. $\boldsymbol{W}_{0}^{d}$ is a complete separable
Banach space under the norm
\[
\lVert\omega\rVert=\sum_{n=1}^{\infty}2^{-n}\max_{0\leq t\leq n}\left|\omega(t)\right|,
\]
which induces the topology of uniform convergence over every compact
subset of $[0,\infty)$. The Borel $\sigma$-algebra on $\boldsymbol{W}_{0}^{d}$
is denoted by $\mathscr{B}(\boldsymbol{W}_{0}^{d})$ or by $\mathscr{B}$
if no confusion may arise. Following It\^{o} and McKean \citep{Ito1974},
we will use $\omega$ to denote a general element, so that $\omega(t)$
is the value of a sample path $\omega$ at $t\geq0$, the $t$-th
coordinate of a sample point $\omega\in\boldsymbol{W}_{0}^{d}$. The
same notation $\omega(t)$ denotes also the coordinate mapping $\omega\rightarrow\omega(t)$,
and the parametrised family $\left\{ \omega(t):t\geq0\right\} $ is
the coordinate process on $\boldsymbol{W}_{0}^{d}$. The coordinate
mapping $\omega(t)$ may be denoted by $\omega_{t}$ (for $t\geq0$)
too. Then the Borel $\sigma$-algebra $\mathscr{B}\left(\boldsymbol{W}_{0}^{d}\right)$
is the smallest $\sigma$-algebra on $\boldsymbol{W}_{0}^{d}$ with
which all coordinate functions $\omega(t)$ (for $t\geq0$) are measurable
(for a proof, see e.g. Stroock and Varadhan \citep{Stroock2007}).
The Wiener measure $P^{W}$ is the unique probability on $\left(\boldsymbol{W}_{0}^{d},\mathscr{B}\right)$
such that the coordinate process $(\omega(t))_{t\geq0}$ of $\boldsymbol{W}_{0}^{d}$
is a standard Brownian motion in $\mathbb{R}^{d}$. To complete the
definition of the classical Wiener space, one should identify the
Cameron-Martin space of the Wiener measure $P^{W}$. To this end,
it is better to identify the Wiener measure $P^{W}$ as a Gaussian
measure on $\boldsymbol{W}_{0}^{d}$. For simplicity, $\boldsymbol{W}_{0}^{d}$
and $P^{W}$ will be denoted by $\boldsymbol{W}$ and $P$ respectively,
if no confusion is possible. 

Let $\mathcal{H}$ be the space of all $h\in\boldsymbol{W}$ such
that $t\rightarrow h(t)$ is absolutely continuous and its generalized
derivative $\dot{h}$ is square-integrable on $[0,\infty)$. $\mathcal{H}$
is a Hilbert space under the norm $\Vert h\Vert_{\mathcal{H}}=\sqrt{\int_{0}^{\infty}\vert\dot{h}(t)\vert^{2}dt}$,
and the dual space $\boldsymbol{W}^{\star}$ of all continuous linear
functionals on $\boldsymbol{W}$ can be identified as a subset of
$\mathcal{H}$, so that we have the continuous densely embedding $\boldsymbol{W}^{\star}\hookrightarrow\mathcal{H}\hookrightarrow\boldsymbol{W}$
with respect to their corresponding norms. 

$P$ is the unique measure on $\left(\boldsymbol{W},\mathscr{B}\right)$
such that every continuous linear functional $\gamma\in\boldsymbol{W}^{\star}$
has a normal distribution with mean zero and variance $\Vert\gamma\Vert_{\mathcal{H}}^{2}$.
In other words, $P$ is the unique probability measure on $\boldsymbol{W}$
such that
\[
\int_{\boldsymbol{W}}e^{i\gamma(\omega)}P(d\omega)=\exp\left[-\frac{1}{2}\Vert\gamma\Vert_{\mathcal{H}}^{2}\right]
\]
for every $\gamma\in\boldsymbol{W}^{\star}$. Therefore, every $h\in\mathcal{H}$
corresponds (unique up to almost surely) to a random variable on $\boldsymbol{W}$,
still denoted by $h$, which has a normal distribution $N(0,\Vert h\Vert_{\mathcal{H}}^{2})$.
In fact, for every $h\in\mathcal{H}$, the corresponding Gaussian
variable $h$ can be identified with the It$\hat{o}$ integral, denoted
by $\left[h\right]$, $\int_{0}^{\infty}\dot{h}d\omega$ of $\dot{h}$
against the Brownian motion $(\omega(t))_{t\geq0}$, which is defined
in probability sense. Under this sense, the triple $(\boldsymbol{W},\mathcal{H},P)$
is an example of abstract Wiener spaces, a concept introduced by L.
Gross \citep{Gross1967}, called the classical Wiener space. The completion
of the Borel $\sigma$-algebra $\mathscr{B}$ is denoted by $\mathscr{F}$. 

An $\mathscr{F}$-measurable (valued in a separable Hilbert space)
function on $\boldsymbol{W}$ is called, according to the convention
in literature, a Wiener functional. 

\subsection{Malliavin derivative and capacity}

A differential structure on the Wiener space $(\boldsymbol{W},\mathcal{H},P)$
compatible to the Wiener measure was introduced by Malliavin\citep{Malliavin1978},
\citep{Malliavin1978a}. The Malliavin derivative for smooth random
variables of form
\[
F=f([h_{1}],,\cdots,[h_{n}]),\quad h_{i}\in\mathcal{H},
\]
can be defined formally by differentiating $F$, as long as $f\in C_{p}^{\infty}(\mathbb{R}^{n})$,
a function whose partial derivatives have polynomial growth. The Malliavin
derivative of $F$ is an $\mathcal{H}$-valued random variable defined
by 
\[
DF=\sum_{i=1}^{n}\partial_{i}f([h_{1}],,\cdots,[h_{n}])h_{i},
\]
where $\partial_{i}f(x_{1},\cdots,x_{n})$ is the partial derivative
of $f$ in $i$-th component. The high order Malliavin derivatives
$D^{k}F$ for all $k\geq1$ may be defined inductively. The collection
of all such smooth random variables $F$ is denoted by $\mathcal{S}$.
For $r\in\mathbb{N}$ and $1<p<\infty$, let $\mathbb{D}_{r}^{p}$
be the completion of $\mathcal{S}$ with respect to the Sobolev norm
\[
\lVert F\rVert_{\mathbb{D}_{r}^{p}}=\left(\mathbb{E}\left[|F|^{p}\right]+\sum_{k=1}^{r}\mathbb{E}\left[\left|\lVert D^{k}F\rVert_{\mathcal{H}^{\otimes k}}\right|^{p}\right]\right)^{1/p}.
\]
The $(p,r)$-capacity of an open subset $O$ of $\boldsymbol{W}$
is defined by (see e.g. \citep{Malliavin2015}):

\[
c_{p,r}\left(O\right)=\inf\left\{ \lVert\varphi\rVert_{\mathbb{D}_{r}^{p}}:\varphi\in\mathbb{D}_{r}^{p},\ \varphi\geq1\ \text{a.e. on }O,\ \varphi\geq0\ \text{a.e. on }\boldsymbol{W}\right\} ,
\]
and for an arbitrary subset $A$ of $\boldsymbol{W}$, its $(p,r)$-capacity
is
\[
c_{p,r}\left(A\right)=\inf\left\{ c_{p,r}\left(O\right):A\subset O,\ O\ \text{ is open}\right\} .
\]
$A\subset\boldsymbol{W}$ is said to be slim if $c_{p,r}(A)=0$ for
all $r\in\mathbb{N}$ and $1<p<\infty$. A property $\pi$ defined
over $\boldsymbol{W}$ is said to hold quasi-surely (q.s.) if the
set on which this property is not satisfied is slim. 

The notion of slim sets on the classical Wiener space $(\boldsymbol{W},\mathcal{H},P)$
can be studied via the Orenstein-Uhlenbeck operator, which gives rise
to a different but equivalent approach to $(p,r)$-capacity. For a
given $p\in\left[1,\infty\right]$, let $(T_{t})_{t\geq0}$ denote
the Ornstein-Uhlenbeck semi-group on $L^{p}(\boldsymbol{W},P)$, which
is the one-parameter semi-group of contractions on $L^{p}(\boldsymbol{W},P)$
given by
\[
T_{t}u(x)=\int_{\boldsymbol{W}}u\left(e^{-t}x+\sqrt{1-e^{-2t}}\omega\right)P(d\omega).
\]
Let $L$ be the generator of the semi-group $(T_{t})$, that is,
\begin{align*}
\mathcal{D}(L) & =\left\{ u\in L^{p}(\boldsymbol{W},P):\lim_{t\downarrow0}\frac{T_{t}u-u}{t}\textrm{ exists in }L^{p}\textrm{-space}\right\} 
\end{align*}
and
\[
Lu=\lim_{t\downarrow0}\frac{T_{t}u-u}{t}\textrm{ for }u\in\mathcal{D}(L).
\]
For each $r>0$, $(I-L)^{-\frac{r}{2}}$ is again a contraction on
$L^{p}(\boldsymbol{W},P)$, and is given by the following integral
\[
(I-L)^{-\frac{r}{2}}=\frac{1}{\Gamma(r/2)}\int_{0}^{\infty}t^{\frac{r}{2}-1}e^{-t}T_{t}dt
\]
(defined in the sense of Bochner's integrals). The corresponding Sobolev
norm $\lVert\cdot\rVert_{r,p}$ (where $1<p<\infty$) is then defined
by
\[
\lVert u\rVert_{r,p}=\lVert(I-L)^{-\frac{r}{2}}u\rVert_{p}.
\]

The corresponding $(p,r)$-capacity $C_{r,p}$, following Fukushima's
convention in \citep{Fukushima1993}, can be defined in a similar
manner as before, namely, for an open subset $O$ of $\boldsymbol{W}$,
\[
C_{r,p}\left(O\right)=\inf\left\{ \lVert\phi\rVert_{p}^{p}:\ (I-L)^{-\frac{r}{2}}\phi\geq1\ \text{a.e. on }O,\ (I-L)^{-\frac{r}{2}}\phi\geq0\ \text{a.e. on }\boldsymbol{W}\right\} ,
\]
(with convention that $\inf\textrm{�}=\infty$) and 
\[
C_{r,p}\left(A\right)=\inf\left\{ \mathrm{C}_{r,p}\left(O\right):A\subset O,\ O\ \text{ is open}\right\} 
\]
for an arbitrary subset $A$ of $\boldsymbol{W}$. It was Meyer \citep{Meyer1983}
who proved that norms $\lVert\cdot\rVert_{\mathbb{D}_{r}^{p}}$ and
$\lVert\cdot\rVert_{r,p}$ are equivalent, and it follows that there
exists a constant $\alpha_{r,p}>0$ such that
\begin{equation}
\frac{1}{\alpha_{r,p}}C_{r,p}\left(A\right)\leq\left[c_{p,r}\left(A\right)\right]^{p}\leq\alpha_{r,p}C_{r,p}\left(A\right)\label{eq:cap_comparison}
\end{equation}
 for every $A\subset\boldsymbol{W}$. For further details about the
norms $\lVert\cdot\rVert_{r,p}$ and the corresponding capacity, one
should refer to \citep{Fukushima1993}, \citep{Watanabe1984} and
\citep{Takeda1984}.

The important properties about $(p,r)$-capacity are stated below,
which will be used in the following text. Firstly capacities $c_{p,r}$
and $C_{r,p}$ are outer measures in the sense that $c_{p,r}$ and
$C_{r,p}$ are monotonic and sub-additive, that is, $c_{p,r}(A)\leq c_{p,r}(B)$
for any $A\subseteq B$, and $c_{p,r}(A)\leq\sum_{n}c_{p,r}(A_{n})$
if $A\subset\cup_{n}A_{n}$. These properties hold for $C_{r,p}$
as well. Let us point out that the sub-additivity of $c_{p,r}$ follows
from the localization of $\lVert\cdot\rVert_{\mathbb{D}_{r}^{p}}$
, while the sub-additivity of $C_{r,p}$ follows from the triangle
inequality for norms. It follows that the first Borel-Cantelli applies
to these capacities (see e.g. Corollary 1.2.4, Chapter IV, \citep{Malliavin2015}).
More precisely, if $\{A_{n}\}_{n=1}^{\infty}$ is a sequence of subsets
of $\boldsymbol{W}$ such that $\sum_{n=1}^{\infty}c_{p,r}(A_{n})<\infty$,
then $c_{p,r}(\limsup_{n\to\infty}A_{n})=0.$ The capacity version
of the Borel-Cantelli lemma, together with the concept of the Malliavin
derivative, are the major tools in our arguments in this work. In
fact, the definition of the capacity $c_{p,r}$ implies that the following
Chebyshev's inequality (see e.g. Corollary 1.2.5, Chapter IV, \citep{Malliavin2015}).
If $\varphi\in\mathbb{D}_{r}^{p}$ and $\varphi$ is lower-semi continuous,
then
\[
c_{p,r}\left(\varphi>\lambda\right)\leq\lambda^{-1}\lVert\varphi\rVert_{\mathbb{D}_{r}^{p}}
\]
for every $\lambda>0$. 

Lemma 1.1 in \citep{Fukushima1993} with the Meyer's inequality implies
a stronger version of the sub-additivity for $c_{p,r}$, which says
that
\begin{equation}
{\color{black}{\normalcolor \left[c_{p,r}(A)\right]^{p}\leq M_{p,r}\sum_{n=1}^{\infty}\left[c_{p,r}(A_{n})\right]^{p}}}\label{eq:cap_p_subadditivity}
\end{equation}
for some constant $M_{p,r}$ depending only on $p$ and $r$, for
any $A\subset\bigcup_{n}A_{n}$. 

$c_{p,r}$ is lower continuous (see e.g. \citep{Malliavin2015}, Chapter
IV, Theorem 5.1) in the sense that for an increasing sequence of sets
$\{A_{n}\}_{n=1}^{\infty}$,
\begin{equation}
c_{p,r}\left(\bigcup_{n=1}^{\infty}A_{n}\right)=\lim_{n\to\infty}c_{p,r}(A_{n}).\label{eq:cap_lower_continuous}
\end{equation}

\subsection{Fractional Brownian motion}

In this sub-section we consider a class of Wiener functionals, fractional
Brownian motions (fBM) with Hurst parameter $H$, which are defined
as singular It\^{o}'s integrals with respect to Brownian motion.
FBMs are measurable functions on the Wiener space $(\boldsymbol{W},\mathcal{H},P)$
which are smooth in the sense of Malliavin differentiation.

An fBM $(B_{t})_{t\geq0}$ (of dimension one) with Hurst parameter
$H\in(0,1)$ is a centred Gaussian process on a probability space
$(\varOmega,\mathcal{F},\mathbb{P})$ whose covariance function is
given by
\[
R(t,s)=\mathbb{E}\left[B_{t}B_{s}\right]=\frac{1}{2}\left(t^{2H}+s^{2H}-|t-s|^{2H}\right).
\]
An fBM has stationary increments, i.e. $B_{t}-B_{s}$ and $B_{t-s}$
have the same distribution. FBMs are known as examples of self-similar
processes, i.e. for any $\alpha>0$, $\left\{ B_{t}:t\geq0\right\} =\left\{ \alpha^{-H}B_{\alpha t}:t\geq0\right\} $
in distribution. 

In this paper, fBMs will be realised as Wiener functionals on the
classical Wiener space $(\boldsymbol{W},\mathcal{H},P)$, in terms
of the following integral representation (see e.g. \citep{Decreusefond1999}):
\begin{equation}
B_{t}=\int_{0}^{t}K(t,s)d\omega(s),\label{eq:fBM_integral_repn}
\end{equation}
where the integrals on the right-hand side have to be interpreted
as It\^{o} integrals against Brownian motion $\left\{ \omega(t):t\geq0\right\} $
under the Wiener measure $P$. Here, for each pair $t>s\geq0$ define
$K$ to be the reproducing kernel 
\[
K(t,s)=\sqrt{\frac{H(2H-1)}{\beta(2-2H,H-\frac{1}{2})}}s^{\frac{1}{2}-H}\int_{s}^{t}(u-s)^{H-\frac{3}{2}}u^{H-\frac{1}{2}}du,
\]
if $H>\frac{1}{2}$, and for $H<\frac{1}{2}$, 
\[
\begin{aligned}K(t,s)= & \sqrt{\frac{2H}{(1-2H)\beta(1-2H,H+\frac{1}{2})}}\\
 & \cdot\left[\left(\frac{t}{s}\right)^{H-\frac{1}{2}}(t-s)^{H-\frac{1}{2}}-\left(H-\frac{1}{2}\right)s^{\frac{1}{2}-H}\int_{s}^{t}u^{H-\frac{3}{2}}(u-s)^{H-\frac{1}{2}}du\right],
\end{aligned}
\]
and we define $K=1$ when $H=\frac{1}{2}$, so that our results are
compatible with the classical results for Brownian motion. We notice
that $K$ is a non-negative but singular kernel and it satisfies that 

\[
\int_{0}^{t\wedge u}K(t,s)K(u,s)ds=R(t,u).
\]
For further details on the above integral representation and reproducing
kernel $K$, one may refer to \citep{Decreusefond1999} and Chapter
$5$ in \citep{Nualart2006}. $B_{t}$ (for $t>0$) are Gaussian random
variables on Wiener space $(\boldsymbol{W},\mu)$, and $\mathbb{E}\left|B_{t}-B_{s}\right|^{2}=\left|t-s\right|^{2H}$.
By choosing proper modifications of $B_{t}$ we may assume that $t\rightarrow B_{t}$
are continuous. 

For every $t\geq0$, $B_{t}$ defined by the previous integral representation
is smooth in Malliavin's sense, that is, it belongs to Sobolev space
$\mathbb{D}_{r}^{p}$ for any $r\in\mathbb{N}$ and $p\in(1,\infty)$.
In what follows, we will work with this version of fBM only. For example,
the Malliavin derivative of $B_{t}$ as a function on $\boldsymbol{W}$
can be calculated as in the following lemma, which will be used in
our main arguments.
\begin{lem}
\label{lem:Malliavin deriv}Let $H\in(0,1)$, $r\in\mathbb{N}$ and
$p\in(1,\infty)$. Then $B_{t}\in\mathbb{D}_{r}^{p}$ (for every $t>0$)
and its first order Malliavin derivative is given by 
\begin{equation}
DB_{t}(s)=\int_{0}^{s\wedge t}K(t,u)du.\label{eq:fBM_Malliavin_deriv}
\end{equation}
The higher-order derivatives of $B_{t}$ vanish (which reflects the
fact that $B_{t}$ is an integral of a deterministic function against
Brownian motion).
\end{lem}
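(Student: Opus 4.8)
The plan is to recognise that, since the kernel $K(t,\cdot)$ is deterministic, $B_{t}$ is merely an element of the first Wiener chaos, so the whole statement reduces to the elementary identity $D[h]=h$ for $h\in\mathcal{H}$ together with a single square-integrability estimate. First I would introduce the function $h_{t}$ defined by $h_{t}(s)=\int_{0}^{s\wedge t}K(t,u)\,du$, whose generalised derivative is $\dot{h}_{t}(s)=K(t,s)\mathbf{1}_{[0,t]}(s)$, and verify that $h_{t}\in\mathcal{H}$. This is the only quantitative point: one computes
\[
\|h_{t}\|_{\mathcal{H}}^{2}=\int_{0}^{\infty}\dot{h}_{t}(s)^{2}\,ds=\int_{0}^{t}K(t,s)^{2}\,ds=R(t,t)=t^{2H}<\infty,
\]
where the penultimate equality is the reproducing property $\int_{0}^{t\wedge u}K(t,s)K(u,s)\,ds=R(t,u)$ specialised to $u=t$. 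It is here that the singular behaviour of $K$ (near $s=0$ and, for $H<\tfrac{1}{2}$, near $s=t$) must be controlled, but the reproducing identity furnishes the bound outright, so no delicate pointwise estimate of $K$ itself is required.

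Next I would identify $B_{t}$ with the Gaussian variable $[h_{t}]$. By the definition of $[h]$ as the It\^{o} integral $\int_{0}^{\infty}\dot{h}\,d\omega$ and the representation \eqref{eq:fBM_integral_repn}, we have $B_{t}=\int_{0}^{\infty}\dot{h}_{t}(s)\,d\omega(s)=[h_{t}]$, $P$-almost surely, so that $B_{t}$ lies in the first Wiener chaos.

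Finally, I would observe that $B_{t}=\mathrm{id}([h_{t}])$ with $\mathrm{id}\in C_{p}^{\infty}(\mathbb{R})$, whence $B_{t}\in\mathcal{S}\subset\mathbb{D}_{r}^{p}$ for every $r\in\mathbb{N}$ and $p\in(1,\infty)$; finiteness of the Sobolev norms $\lVert\cdot\rVert_{\mathbb{D}_{r}^{p}}$ is automatic, since a first-chaos variable has moments of all orders and a constant $\mathcal{H}$-valued derivative. Applying the definition of the Malliavin derivative to $F=f([h_{t}])$ with $f=\mathrm{id}$ gives $DB_{t}=f'([h_{t}])\,h_{t}=h_{t}$, that is $DB_{t}(s)=h_{t}(s)=\int_{0}^{s\wedge t}K(t,u)\,du$, which is precisely \eqref{eq:fBM_Malliavin_deriv}. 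Since $f''\equiv0$, the inductive definition of the higher-order derivatives yields $D^{k}B_{t}=0$ for all $k\geq2$, reflecting that $B_{t}$ is an integral of a deterministic integrand.

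The main obstacle is confined to Step 1, namely ensuring that the singular kernel produces a genuine Cameron--Martin element, so that $B_{t}$ is a legitimate first-chaos variable rather than a merely formal It\^{o} integral; once $h_{t}\in\mathcal{H}$ is secured, every remaining assertion follows automatically from the structure of $\mathcal{S}$ and the definition of $D$.
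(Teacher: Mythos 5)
Your proof is correct, but it takes a genuinely different and much more economical route than the paper. You observe that $B_{t}$ is the first-chaos variable $[h_{t}]$ with $h_{t}(s)=\int_{0}^{s\wedge t}K(t,u)\,du$, check $h_{t}\in\mathcal{H}$ in one line from the reproducing identity $\int_{0}^{t}K(t,s)^{2}\,ds=R(t,t)=t^{2H}$ (legitimate here since $K\geq0$, so the integral is unambiguously defined and the identity, quoted in Section 2.2 from Decreusefond--\"{U}st\"{u}nel and Nualart, asserts its finiteness), and then read off $DB_{t}=h_{t}$ and $D^{k}B_{t}=0$ for $k\geq2$ directly from the definition of $D$ on the class $\mathcal{S}$ of smooth cylinder functionals, since the identity map lies in $C_{p}^{\infty}(\mathbb{R})$. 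This is essentially the transfer-principle shortcut that the paper explicitly mentions (Proposition 5.2.1 of Nualart) before electing to give an elementary proof ``for completeness.'' The paper's appendix instead builds everything by hand: it approximates the kernel by dyadic step functions $u_{t}^{(n)}$, controls them via the pointwise singular estimate \eqref{eq:K_estimate} and dominated convergence to get $u_{t}^{(n)}\to u_{t}$ in $L^{2}$, realises the corresponding Riemann sums $B_{t}^{(n)}$ as a discrete martingale to obtain $L^{p}$-convergence to $B_{t}$ with uniform moment bounds, and then passes to the limit in $\mathbb{D}_{r}^{p}$ using completeness and the closability of $D$. What the paper's route buys is self-containedness --- it does not take the reproducing identity or the construction of the Wiener integral of a singular deterministic integrand as black boxes, and it re-derives the covariance $R(s,t)$ and Gaussianity of $B_{t}$ along the way. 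What your route buys is brevity and conceptual clarity: once $h_{t}\in\mathcal{H}$ is secured, everything else is definitional. The only caveat is that a fully self-contained version of your Step 1 would still need some control of the singular kernel (as the paper supplies via \eqref{eq:K_estimate}) if one were not willing to cite the reproducing identity; as written, your appeal to that identity is consistent with how the paper itself introduces $K$, so this is a matter of taste rather than a gap.
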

This lemma is a Corollary to the transfer principle provided in Proposition
5.2.1, page 288, \citep{Nualart2006}. We provide an elementary proof
slightly different from that in \citep{Nualart2006} in the appendix
for completeness. 

\begin{rem}
As a consequence, according to Malliavin (Theorem 2.3.3, page 97,
\citep{Malliavin2015}), given a pair $r\in\mathbb{N}$ and $p\geq1$,
for every $\varepsilon>0$, there is an open subset $\mathcal{O}_{\varepsilon}\subseteq\boldsymbol{W}$
with $c_{p,r}(\mathcal{O}_{\varepsilon})<\varepsilon$, and there
is a family of continuous functions $\tilde{B}_{t}$ (for $t>0$)
on $\boldsymbol{W}$ such that $B_{t}=\tilde{B}_{t}$ (for all $t>0$)
$P$-a.e., and $\tilde{B}_{t}$ are continuous on $\boldsymbol{W}\setminus\mathcal{O}_{\varepsilon}$
for all $t>0$.
\end{rem}

\section{Several Technical Facts}

In this section, we shall prove several technical facts about fBM
which will be used in proving our main results. The first one is the
following inequality, which is similar to the result due to Fukushima
in \citep{Fukushima1984}, however the proof of our case is more subtle. 
\begin{lem}
\label{increment inequality}For all $H\in(0,1)$, 
\[
c_{p,r}(|B_{t}-B_{s}|>\eta)\leq\sqrt[p]{2\left[\sum_{l=0}^{r}\left(\frac{\eta}{p(t-s)^{H}}\right)^{lp}\right]}e^{-\frac{\eta^{2}}{2p(t-s)^{2H}}}
\]
 for any $r\in\mathbb{N}$, $1<p<\infty$, $\eta>0$, and $0\leq s<t$.
\end{lem}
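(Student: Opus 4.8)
The plan is to treat the increment $G:=B_{t}-B_{s}$ as a single Wiener functional and to exploit that its Malliavin derivative is \emph{deterministic}. First I would record the Gaussian and differential structure of $G$: from $\mathbb{E}|B_{t}-B_{s}|^{2}=(t-s)^{2H}$ it is centred Gaussian with variance $\sigma^{2}:=(t-s)^{2H}$, and from Lemma~\ref{lem:Malliavin deriv} together with the kernel identity $\int_{0}^{t\wedge u}K(t,s)K(u,s)\,ds=R(t,u)$ one gets $DG=DB_{t}-DB_{s}\in\mathcal{H}$ with
\[
\lVert DG\rVert_{\mathcal H}^{2}=R(t,t)-2R(t,s)+R(s,s)=(t-s)^{2H}=\sigma^{2},
\]
a \emph{nonrandom} constant, while $D^{k}G=0$ for every $k\ge2$. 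This nonrandomness is exactly what will make all the Sobolev norms below explicit.

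Next I would estimate one tail. Fix $\theta>0$ and put $\varphi_{\theta}=e^{\theta(G-\eta)}$, a nonnegative functional with $\{G>\eta\}=\{\varphi_{\theta}>1\}$. Since $D^{k}G=0$ for $k\ge2$, an induction gives $D^{k}\varphi_{\theta}=\theta^{k}(DG)^{\otimes k}\varphi_{\theta}$, hence pointwise $\lVert D^{k}\varphi_{\theta}\rVert_{\mathcal H^{\otimes k}}=(\theta\sigma)^{k}\varphi_{\theta}$, so that
\[
\lVert\varphi_{\theta}\rVert_{\mathbb{D}_{r}^{p}}^{p}=\mathbb{E}[\varphi_{\theta}^{\,p}]\sum_{k=0}^{r}(\theta\sigma)^{kp},\qquad \mathbb{E}[\varphi_{\theta}^{\,p}]=\exp\!\Big(\tfrac12 p^{2}\theta^{2}\sigma^{2}-p\theta\eta\Big),
\]
the last equality being the Gaussian moment generating function. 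Applying the capacitary Chebyshev inequality with $\lambda=1$ to the quasi-continuous version of $\varphi_{\theta}$ furnished by the Remark, and then minimising $-\theta\eta+\tfrac12 p\theta^{2}\sigma^{2}$ at $\theta=\eta/(p\sigma^{2})$ (so that $\theta\sigma=\eta/(p(t-s)^{H})$), I obtain
\[
c_{p,r}(G>\eta)\le\Big(\sum_{l=0}^{r}\big(\tfrac{\eta}{p(t-s)^{H}}\big)^{lp}\Big)^{1/p}\exp\!\Big(-\tfrac{\eta^{2}}{2p(t-s)^{2H}}\Big),
\]
and, replacing $\varphi_{\theta}$ by $e^{-\theta(G+\eta)}$, the identical bound for $c_{p,r}(G<-\eta)$.

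Finally I would assemble $\{|G|>\eta\}=\{G>\eta\}\cup\{G<-\eta\}$. The main obstacle is the \emph{sharp constant}: ordinary sub-additivity only yields $2\,(\cdots)^{1/p}$, which is weaker than the asserted $2^{1/p}(\cdots)^{1/p}$. To recover $2^{1/p}$ one must add the two (equal) one-sided estimates at the level of their $p$-th powers, i.e. invoke the strengthened sub-additivity \eqref{eq:cap_p_subadditivity} so that $[c_{p,r}(|G|>\eta)]^{p}$ is controlled by the \emph{sum} $[c_{p,r}(G>\eta)]^{p}+[c_{p,r}(G<-\eta)]^{p}$, the factor $2$ then sitting inside the root. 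It is instructive that the single test function $e^{\theta(|G|-\eta)}$ would produce the constant $2^{1/p}$ in one stroke, because the folded Gaussian moment obeys $\mathbb{E}[e^{p\theta|G|}]\le 2e^{p^{2}\theta^{2}\sigma^{2}/2}$; the reason this shortcut is unavailable---and the point that needs the most care---is that $|G|$ is not twice Malliavin differentiable, so $e^{\theta(|G|-\eta)}\notin\mathbb{D}_{r}^{p}$ for $r\ge2$, which forces the two-sided split and the $p$-power sub-additivity above. I would also keep track of the quasi-continuity and measurability of $\varphi_{\theta}$ needed to legitimise the capacitary Chebyshev step, which the continuous modification and the equivalence \eqref{eq:cap_comparison} take care of.
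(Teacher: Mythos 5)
Your proposal follows essentially the same route as the paper's proof: the exponential test function $e^{\theta(B_{t}-B_{s})}$ (your $\theta$ is the paper's $\alpha/p$), the explicit computation of its $\mathbb{D}_{r}^{p}$-norm from the fact that $DM_{s,t}$ is deterministic with $\lVert DM_{s,t}\rVert_{\mathcal{H}}^{2}=(t-s)^{2H}$ and all higher derivatives vanish, the capacitary Chebyshev inequality, and optimisation of the exponent at $\theta=\eta/(p(t-s)^{2H})$, which reproduces the paper's one-sided bound exactly.

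Two points need more care. First, you assert $D^{k}\varphi_{\theta}=\theta^{k}(DG)^{\otimes k}\varphi_{\theta}$ ``by induction,'' but $x\mapsto e^{\theta x}$ does not have polynomially growing derivatives, so $\varphi_{\theta}$ is not a smooth cylindrical functional in $\mathcal{S}$ and the chain rule does not apply to it directly; one must approximate by $f\cdot\psi_{N}$ with compactly supported cut-offs, pass to the limit in $L^{p}(\boldsymbol{W};\mathcal{H}^{\otimes l})$, and invoke the closability of $D^{l}$ to conclude that $e^{\theta G}\in\mathbb{D}_{r}^{p}$ with the claimed derivatives. This verification is precisely where the paper spends the first half of its proof, and it cannot be waved away, even though it is routine. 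Second, your diagnosis of the constant is apt --- plain subadditivity only gives the prefactor $2$ outside the $p$-th root rather than $2^{1/p}$ --- but the remedy you propose, namely (\ref{eq:cap_p_subadditivity}), carries the constant $M_{p,r}$ and therefore yields $(2M_{p,r})^{1/p}$, not $2^{1/p}$; the paper itself simply adds the $p$-th powers of the two one-sided capacities without comment. Since every later application of the lemma only uses the exponential decay and tolerates multiplicative constants, neither reading causes trouble downstream, but your claim to ``recover $2^{1/p}$'' from (\ref{eq:cap_p_subadditivity}) as stated is not quite what that inequality delivers.
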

\begin{proof}
Let $M_{s,t}=B_{t}-B_{s}$ with $0\leq s<t$. Then by the definition
of Malliavin derivative, we obtain that
\[
DM_{s,t}(u)=\int_{0}^{u}K(t,r)\mathds{1}_{[0,t]}(r)-K(s,r)\mathds{1}_{[0,s]}(r)dr\in\mathcal{H}
\]
and higher order derivatives of $M_{s,t}$ all vanish. We show that
for $\alpha\geq0$, $e^{\frac{\alpha}{p}M_{s,t}}\in\mathbb{D}_{r}^{p}$,
and 
\[
D^{l}e^{\frac{\alpha}{p}M_{s,t}}=\left(\frac{\alpha}{p}\right)^{l}e^{\frac{\alpha}{p}M_{s,t}}DM_{s,t}\otimes\cdots\otimes DM_{s,t}\in L^{p}(\boldsymbol{W};\mathcal{H}^{\otimes l})
\]
for all $1\leq l\leq r$. 

Set $f(x)=e^{\frac{\alpha}{p}x}$. For each $N\in\mathbb{N}$, let
$\psi_{N}\in C_{0}^{\infty}(\mathbb{R})$ be a cut-off function taking
values in $[0,1]$ such that 
\[
\psi_{N}(x)=\begin{cases}
1, & |x|\leq N\\
0 & |x|\geq N+1,
\end{cases}
\]
and $\sup_{x,N}|\psi_{N}^{(k)}(x)|=C<\infty$ for all $1\leq k\leq r$.
Set $f_{N}(x)=f(x)\cdot\psi_{N}(x)$. For convenience, write $F_{N}=f_{N}(M_{s,t})$,
then $F_{N}\in\mathcal{S}$ as $f_{N}\in C_{0}^{\infty}(\mathbb{R})$,
and by using the chain rule for Malliavin derivatives, we have
\[
D^{l}F_{N}=f_{N}^{(l)}(M_{s,t})DM_{s,t}\otimes\cdots\otimes DM_{s,t}
\]
for $1\leq l\leq r$. Hence,
\begin{align*}
 & \hphantom{=}\ \ \mathbb{E}\left[\Big\vert\big\Vert D^{l}F_{N}-\fracap^{l}e^{\frac{\alpha}{p}M_{s,t}}DM_{s,t}\otimes\cdots\otimes DM_{s,t}\big\Vert_{\mathcal{H}^{\otimes l}}\Big\vert^{p}\vphantom{\sum_{j=0}^{l}}\right]\\
 & =\mathbb{E}\left[\big\vert f_{N}^{(l)}(M_{s,t})-\fracap^{l}e^{\frac{\alpha}{p}M_{s,t}}\big\vert^{p}\big\Vert DM_{s,t}\big\Vert_{\mathcal{H}}^{lp}\vphantom{\sum_{j=0}^{l}}\right]\\
 & =\mathbb{E}\left[\Big\vert\sum_{j=0}^{l}\lchoosej f^{(j)}(M_{s,t})\psi_{N}^{(l-j)}(M_{s,t})-\fracap^{l}e^{\frac{\alpha}{p}M_{s,t}}\Big\vert^{p}\right]\big\Vert DM_{s,t}\big\Vert_{\mathcal{H}}^{lp}\vphantom{\left(\frac{\alpha}{p}\right)^{l}}\\
 & =\mathbb{E}\left[\Big\vert\sum_{j=0}^{l}\lchoosej f^{(j)}(M_{s,t})\psi_{N}^{(l-j)}(M_{s,t})+\fracap^{l}e^{\frac{\alpha}{p}M_{s,t}}\psi_{N}(M_{s,t})-\fracap^{l}e^{\frac{\alpha}{p}M_{s,t}}\Big\vert^{p}\right]\big\Vert DM_{s,t}\big\Vert_{\mathcal{H}}^{lp}\vphantom{\left(\frac{\alpha}{p}\right)^{l}}\\
 & \leq l^{p-1}\mathbb{E}\left[\sum_{j=0}^{l-1}\Big\vert\lchoosej f^{(j)}(M_{s,t})\psi_{N}^{(l-j)}(M_{s,t})\Big\vert^{p}+\Big\vert\fracap^{l}e^{\frac{\alpha}{p}M_{s,t}}\left(\psi_{N}(M_{s,t})-1\right)\Big\vert^{p}\right]\big\Vert DM_{s,t}\big\Vert_{\mathcal{H}}^{lp}\vphantom{\left(\frac{\alpha}{p}\right)^{l}}\\
 & \leq l^{p-1}\mathbb{E}\left[\sum_{j=0}^{l-1}\Big\vert\lchoosej\fracap^{j}e^{\frac{\alpha}{p}M_{s,t}}M\cdot\mathds{1}_{\{|M_{s,t}|\geq N\}}\Big\vert^{p}+\Big\vert\fracap^{l}e^{\frac{\alpha}{p}M_{s,t}}\mathds{1}_{\{|M_{s,t}|\geq N\}}\Big\vert^{p}\right]\big\Vert DM_{s,t}\big\Vert_{\mathcal{H}}^{lp},\vphantom{\left(\frac{\alpha}{p}\right)^{l}}
\end{align*}
which tends to zero as $N\to\infty$ by the dominated convergence
theorem. Since $F_{N}\to e^{\frac{\alpha}{p}M_{s,t}}$ as $N$ goes
to infinity in $L^{p}(\boldsymbol{W})$, and according to the previous
estimate, we get that 
\[
D^{l}F_{N}\to\left(\frac{\alpha}{p}\right)^{l}e^{\frac{\alpha}{p}M_{s,t}}DM_{s,t}\otimes\cdots\otimes DM_{s,t}
\]
 in $L^{p}(\boldsymbol{W};\mathcal{H}^{\otimes l})$. Since $D^{l}$
is closable, together with the definition of $\mathbb{D}_{l}^{p}$,
we deduce that

\[
D^{l}F=\left(\frac{\alpha}{p}\right)^{l}e^{\frac{\alpha}{p}M_{s,t}}DM_{s,t}\otimes\cdots\otimes DM_{s,t}
\]
 for each $1\leq l\leq r$ and $e^{\frac{\alpha}{p}M_{s,t}}\in\mathbb{D}_{r}^{p}$. 

By Chebyshev's inequality for $(p,r)$-capacity, it follows that
\begin{equation}
\begin{aligned}\left[c_{p,r}\left(M_{s,t}-\frac{\alpha}{2}(t-s)^{2H}>\beta\right)\right]^{p} & =\left[c_{p,r}\left(\frac{\alpha}{p}M_{s,t}-\frac{\alpha^{2}}{2p}(t-s)^{2H}>\frac{\alpha\beta}{p}\right)\right]^{p}\\
 & =\left[c_{p,r}\left(\exp\left(\frac{\alpha}{p}M_{s,t}\right)>\exp\left(\frac{\alpha^{2}}{2p}(t-s)^{2H}+\frac{\alpha\beta}{p}\right)\right)\right]^{p}\\
 & \leq\exp\left(-\frac{\alpha^{2}}{2}(t-s)^{2H}-\alpha\beta\right)\big\Vert e^{\frac{\alpha}{p}M_{s,t}}\big\Vert_{\mathbb{D}_{r}^{p}}^{p},
\end{aligned}
\label{eq:increment_ineq_chebyshev}
\end{equation}
 for any $\alpha,\beta>0$. It is clear that 
\[
\begin{aligned}\langle DM_{s,t},DM_{s,t}\rangle_{\mathcal{H}} & =\int_{0}^{\infty}\left[K(t,u)\mathds{1}_{[0,t]}(u)-K(s,u)\mathds{1}_{[0,s]}(u)\right]^{2}du\\
 & =R(t,t)-2R(s,t)+R(s,s)\\
 & =(t-s)^{2H}.
\end{aligned}
\]
Therefore,
\[
\begin{aligned}\langle D^{l}e^{\frac{\alpha}{p}M_{s,t}},D^{l}e^{\frac{\alpha}{p}M_{s,t}}\rangle_{\mathcal{H}^{\otimes l}} & =\left(\frac{\alpha}{p}\right)^{2l}e^{\frac{2\alpha}{p}M_{s,t}}(\langle DM_{s,t},DM_{s,t}\rangle_{\mathcal{H}})^{l}\\
 & =\left(\frac{\alpha}{p}\right)^{2l}e^{\frac{2\alpha}{p}M_{s,t}}(t-s)^{2lH},
\end{aligned}
\]
which implies that 
\[
\begin{aligned}\mathbb{E}\left[\left|\lVert D^{l}e^{\frac{\alpha}{p}M_{s,t}}\rVert_{\mathcal{H}^{\otimes l}}\right|^{p}\right] & =\mathbb{E}\left[\left(\frac{\alpha}{p}\right)^{lp}e^{\alpha M_{s,t}}(t-s)^{lHp}\right]\\
 & =\left(\frac{\alpha}{p}\right)^{lp}(t-s)^{lHp}e^{\frac{\alpha^{2}}{2}(t-s)^{2H}},
\end{aligned}
\]
where we have used that $M_{s,t}\sim N(0,(t-s)^{2H})$. Hence, 
\[
\begin{aligned}\big\Vert e^{\frac{\alpha}{p}M_{s,t}}\big\Vert_{\mathbb{D}_{r}^{p}}^{p} & =\mathbb{E}\left[\left\vert e^{\frac{\alpha}{p}M_{s,t}}\right\vert ^{p}\right]+\sum_{l=1}^{r}\mathbb{E}\left[\left|\lVert D^{l}e^{\frac{\alpha}{p}M_{s,t}}\rVert_{\mathcal{H}^{\otimes l}}\right|^{p}\right]\\
 & =\mathbb{E}\left[e^{\alpha M_{s,t}}\right]+\sum_{l=1}^{r}\left(\frac{\alpha}{p}\right)^{lp}(t-s)^{lHp}e^{\frac{\alpha^{2}}{2}(t-s)^{2H}}\\
 & =\left[\sum_{l=0}^{r}\left(\frac{\alpha}{p}\right)^{lp}(t-s)^{lHp}\right]e^{\frac{\alpha^{2}}{2}(t-s)^{2H}}.
\end{aligned}
\]
Now by (\ref{eq:increment_ineq_chebyshev}), we obtain that
\[
\left[c_{p,r}\left(M_{s,t}-\frac{\alpha}{2}(t-s)^{2H}>\beta\right)\right]^{p}\leq\left[\sum_{l=0}^{r}\left(\frac{\alpha}{p}\right)^{lp}(t-s)^{lHp}\right]e^{-\alpha\beta}.
\]
For any positive $\eta$, optimise the above inequality by setting
$\alpha=\frac{\eta}{(t-s)^{2H}}$ and $\beta=\frac{\eta}{2}$, and
we arrive at
\[
\begin{aligned}\left[c_{p,r}\left(M_{s,t}>\eta\right)\right]^{p} & \leq\left[\sum_{l=0}^{r}\left(\frac{\eta}{p(t-s)^{2H}}\right)^{lp}(t-s)^{lHp}\right]e^{-\frac{\eta^{2}}{2(t-s)^{2H}}}\\
 & =\sum_{l=0}^{r}\left(\frac{\eta}{p(t-s)^{H}}\right)^{lp}e^{-\frac{\eta^{2}}{2(t-s)^{2H}}}.
\end{aligned}
\]
By replacing $B$ with $-B$, we may conclude that
\[
\left[c_{p,r}\left(|M_{s,t}|>\eta\right)\right]^{p}\leq2\left[\sum_{l=0}^{r}\left(\frac{\eta}{p(t-s)^{H}}\right)^{lp}\right]e^{-\frac{\eta^{2}}{2(t-s)^{2H}}}.
\]
\end{proof}
Let $0\leq u<r<s<t\leq T$. Set 
\[
X=\frac{B_{t}-B_{s}}{(t-s)^{H}},\qquad Y=\frac{B_{r}-B_{u}}{(r-u)^{H}},
\]
so that $X,Y\sim N(0,1)$. Moreover,
\begin{equation}
\begin{aligned}\mathbb{E}[XY] & =(t-s)^{-H}(r-u)^{-H}\left(R(t,r)-R(t,u)-R(s,r)+R(s,u)\right)\\
 & =\frac{1}{2(t-s)^{H}(u-r)^{H}}\left[(t-u)^{2H}-(t-r)^{2H}-\left((s-u)^{2H}-(s-r)^{2H}\right)\right],
\end{aligned}
\label{eq:std_increment_cov}
\end{equation}
which is non-negative when $H\in\left[\frac{1}{2},1\right)$, and
non-positive when $H\in\left(0,\frac{1}{2}\right]$. We need the following
simple observation. By (\ref{eq:fBM_Malliavin_deriv}), we compute
that
\begin{equation}
\begin{aligned}\langle DX,DY\rangle_{\mathcal{H}} & =(t-s)^{-H}(r-u)^{-H}\langle DB_{t}-DB_{s},DB_{r}-DB_{u}\rangle_{\mathcal{H}}\vphantom{\int_{0}^{\infty}}\\
 & =(t-s)^{-H}(r-u)^{-H}\int_{0}^{\infty}\left(K(t,v)\mathds{1}_{[0,t]}(v)-K(s,v)\mathds{1}_{[0,s]}(v)\right)\\
 & \hphantom{=}\cdot\left(K(r,v)\mathds{1}_{[0,r]}(v)-K(u,v)\mathds{1}_{[0,u]}(v)\right)dv\vphantom{\int_{0}^{\infty}}\\
 & =\mathbb{E}[XY].\vphantom{\int_{0}^{\infty}}
\end{aligned}
\label{eq:std_increment_Hprod}
\end{equation}

Next techinical lemma contains results similar to Proposition $1$
in Fukushima \citep{Fukushima1984} and Proposition 2 in Takeda \citep{Takeda1984}.
\begin{lem}
\label{capacity-probability}For all $H\in(0,1)$ and each $N\in\mathbb{N}$,
let $0\leq t_{0}<t_{1}<\cdots<t_{N}$ with $|t_{i}-t_{i-1}|=L$, $1\leq i\leq N$.
Take $-\infty<a_{i}<b_{i}<\infty$, $c_{i}>0$, $1\leq i\leq N$.
Then it holds that 
\[
\begin{aligned}\left[c_{p,r}\left(\bigcap_{i=1}^{N}\left\{ a_{i}<X_{i}<b_{i}\right\} \right)\right]^{p}\leq & \left[\sum_{l=0}^{r}N^{lp}C_{H}^{lp/2}\left(\frac{M_{r}}{c}\right)^{lp}\right]P\left(\bigcap_{i=1}^{N}\left\{ a_{i}-c_{i}<X_{i}<b_{i}+c_{i}\right\} \right)\end{aligned}
\]
for all $r\in\mathbb{N}$ and $p\in(1,\infty)$, where 
\begin{equation}
X_{i}=\frac{B_{t_{i}}-B_{t_{i-1}}}{L^{H}}\sim N(0,1),\label{eq:defn_std_increment}
\end{equation}
$c=\min_{1\leq i\leq N}c_{i}$, $M_{r}$ is a constant depending only
on $r$, and 
\[
C_{H}=\max\left\{ 2^{2H-1}-1,1\right\} \leq1
\]
 is some constant depending only on $H$.
\end{lem}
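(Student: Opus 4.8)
The plan is to bound the capacity directly by exhibiting an explicit admissible test function, rather than passing through Chebyshev's inequality as in Lemma~\ref{increment inequality}. First I would choose, for each $1\le i\le N$, a smooth bump $g_{i}\in C_{0}^{\infty}(\mathbb{R})$ with $0\le g_{i}\le1$, equal to $1$ on $[a_{i},b_{i}]$, vanishing outside $(a_{i}-c_{i},b_{i}+c_{i})$, and with derivatives controlled by the window width, $|g_{i}^{(k)}|\le(M_{r}/c_{i})^{k}$ for $1\le k\le r$, where $M_{r}$ depends only on $r$. Set $\varphi=\prod_{i=1}^{N}g_{i}(X_{i})$. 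Since each $X_{i}$ is a Wiener integral of a deterministic kernel, it equals $[h_{i}]$ for some $h_{i}\in\mathcal{H}$, so $\varphi=G([h_{1}],\dots,[h_{N}])$ with $G(x)=\prod_{i}g_{i}(x_{i})\in C_{0}^{\infty}(\mathbb{R}^{N})$; hence $\varphi\in\mathcal{S}\subset\mathbb{D}_{r}^{p}$ and, unlike in Lemma~\ref{increment inequality}, no cutoff approximation is needed. By construction $\varphi\equiv1$ on the open set $\bigcap_{i}\{a_{i}<X_{i}<b_{i}\}$ and $\varphi\ge0$, so $\varphi$ is admissible and the definition of capacity gives $c_{p,r}(\bigcap_{i}\{a_{i}<X_{i}<b_{i}\})\le\lVert\varphi\rVert_{\mathbb{D}_{r}^{p}}$; raising to the $p$-th power reduces the lemma to estimating $\lVert\varphi\rVert_{\mathbb{D}_{r}^{p}}^{p}=\mathbb{E}|\varphi|^{p}+\sum_{l=1}^{r}\mathbb{E}\lVert D^{l}\varphi\rVert_{\mathcal{H}^{\otimes l}}^{p}$ term by term.

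For the Malliavin derivatives I would use that the $h_{i}=DX_{i}$ are deterministic and that $D^{2}X_{i}=0$ (Lemma~\ref{lem:Malliavin deriv}), so the chain rule yields the closed form $D^{l}\varphi=\sum_{\mathbf{j}\in\{1,\dots,N\}^{l}}(\partial_{\mathbf{j}}G)(X)\,DX_{j_{1}}\otimes\cdots\otimes DX_{j_{l}}$. Expanding the tensor norm, $\lVert D^{l}\varphi\rVert_{\mathcal{H}^{\otimes l}}^{2}=\sum_{\mathbf{j},\mathbf{k}}(\partial_{\mathbf{j}}G)(\partial_{\mathbf{k}}G)\prod_{m=1}^{l}\langle DX_{j_{m}},DX_{k_{m}}\rangle_{\mathcal{H}}$, and here the crucial input is \eqref{eq:std_increment_Hprod}, namely $\langle DX_{i},DX_{j}\rangle_{\mathcal{H}}=\mathbb{E}[X_{i}X_{j}]$, together with the stationary-increment correlation structure. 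Since the $X_{i}$ are standardised increments over equal windows, Cauchy--Schwarz (for the diagonal) and the explicit adjacent correlation $\mathbb{E}[X_{i}X_{i+1}]=2^{2H-1}-1$ (which dominates all further off-diagonal entries) show that every Gram entry satisfies $|\langle DX_{i},DX_{j}\rangle_{\mathcal{H}}|\le C_{H}$; this bounds each of the $l$ inner products and, after taking square roots, produces the factor $C_{H}^{l/2}$.

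The remaining factor comes from a purely combinatorial count. Because $G$ factorises, $\partial_{\mathbf{j}}G=\prod_{i}g_{i}^{(n_{i})}(x_{i})$ with $n_{i}$ the multiplicity of $i$ in $\mathbf{j}$, so that $\sum_{\mathbf{j}}|\partial_{\mathbf{j}}G|$ factorises over the $l$ slots as $(\sum_{i=1}^{N}M_{r}/c_{i})^{l}\mathds{1}_{E}\le(NM_{r}/c)^{l}\mathds{1}_{E}$, where $E=\bigcap_{i}\{a_{i}-c_{i}\le X_{i}\le b_{i}+c_{i}\}$ is the enlarged set (the indicator appears because for $l\ge1$ every summand carries at least one factor $g_{i}^{(\ge1)}$, while all remaining $g_{j}$ still vanish off their windows). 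Combining the two bounds gives $\lVert D^{l}\varphi\rVert_{\mathcal{H}^{\otimes l}}\le C_{H}^{l/2}(NM_{r}/c)^{l}\mathds{1}_{E}$, whence $\mathbb{E}\lVert D^{l}\varphi\rVert^{p}\le N^{lp}C_{H}^{lp/2}(M_{r}/c)^{lp}P(E)$, while the $l=0$ term obeys $\mathbb{E}|\varphi|^{p}\le P(E)$ since $0\le\varphi\le\mathds{1}_{E}$. Summing over $0\le l\le r$ yields exactly the claimed inequality. I expect the main obstacle to be the bookkeeping in the two middle steps: organising the tensor-norm expansion so that the Gram entries are cleanly isolated (extracting $C_{H}^{l/2}$ through the quadratic form rather than losing it to a crude triangle inequality on $D^{l}\varphi$), and verifying that the multinomial sum collapses to $(\sum_{i}M_{r}/c_{i})^{l}$ with the correct support restriction to $E$; the correlation bound itself is routine once \eqref{eq:std_increment_Hprod} is invoked.
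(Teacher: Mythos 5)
Your proposal is correct and follows essentially the same route as the paper: an explicit product of smooth bump functions $\prod_i f_i(X_i)$ used as an admissible test function in the definition of $c_{p,r}$, the chain rule with $D^2X_i=0$ to get the closed form of $D^l$, the identity $\langle DX_i,DX_j\rangle_{\mathcal H}=\mathbb{E}[X_iX_j]$ to bound the Gram entries by $C_H$, and the multinomial count giving the factor $N^{lp}(M_r/c)^{lp}$ on the enlarged event. The only (immaterial) differences are that you observe the test function already lies in $\mathcal S$ so no cut-off approximation is needed, and you assert rather than prove the monotonicity of the off-diagonal correlations, which the paper verifies via convexity of $x\mapsto x^{2H-1}$.
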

\begin{proof}
The proof is a modification of Takeda's argument in \citep{Takeda1984}.
For $i=1,2,\cdots,N$, let $f_{i}\in C_{c}^{\infty}(\mathbb{R})$
be the cut-off functions valued in $[0,1]$ such that 
\[
f_{i}(x)=\begin{cases}
1, & x\in(a_{i},b_{i}),\\
0, & x\in(-\infty,a_{i}-c_{i})\cup(b_{i}+c_{i},\infty),
\end{cases}
\]
and 
\[
\left|\frac{d^{l}f_{i}}{dx^{l}}\right|\leq\frac{M_{r}}{c_{i}^{l}}
\]
for all $l\leq r$, where $M_{r}\geq1$ is a constant depending on
$r$. Set $F(x_{1,}\cdots,x_{N})=\prod_{i=1}^{N}f_{i}(x_{i}),$ then
according to the above conditions, we have that
\begin{equation}
\left|\partial_{n_{1},\cdots,n_{l}}^{l}F(x_{1,}\cdots,x_{N})\right|\leq\left(\frac{M_{r}}{c}\right)^{l}\mathds{1}_{\prod_{i=1}^{N}(a_{i}-c_{i},b_{i}+c_{i})}(x_{1},\cdots,x_{N})\label{eq:cut_off_deriv_con}
\end{equation}
 for each $l\leq r$, where $c=\min_{1\leq i\leq N}c_{i}$. For simplicity,
write $Y=F(X_{1},\cdots,X_{N})$, where $X_{i}$'s are defined as
in (\ref{eq:defn_std_increment}). Then $Y\in\mathbb{D}_{l}^{p}$,
and since all Malliavin derivatives of $X_{i}$ with order higher
than $2$ vanish, it holds that
\[
D^{l}Y=\sum_{\mathsmaller{1\leq n_{1},\cdots,n_{l}\leq N}}\partial_{n_{1},\cdots,n_{l}}^{l}F(X_{1},\cdots,X_{N})DX_{n_{1}}\otimes\cdots\otimes DX_{n_{l}}.
\]
Moreover, $D^{l}F\in\mathcal{H}^{\otimes l}$ and
\begin{equation}
\begin{aligned}\lVert D^{l}Y\rVert_{\mathcal{H}^{\otimes l}}^{2}= & \sum_{\substack{\mathsmaller{1\leq n_{1},\cdots,n_{l}\leq N}\\
\mathsmaller{1\leq m_{1},\cdots,m_{l}\leq N}
}
}\left(\partial_{n_{1},\cdots,n_{l}}^{l}F(X_{1},\cdots,X_{N})\partial_{m_{1},\cdots,m_{l}}^{l}F(X_{1},\cdots,X_{N})\vphantom{\prod_{i=1}^{l}}\right.\\
 & \left.\hphantom{\sum_{\substack{1\leq n_{1},\cdots,n_{l}\leq N\\
1\leq m_{1},\cdots,m_{l}\leq N
}
}}\cdot\prod_{i=1}^{l}\langle DX_{n_{i}},DX_{m_{i}}\rangle_{\mathcal{H}}\right).
\end{aligned}
\label{eq:cap_prob_Hnorm}
\end{equation}

Our next step is to find an upper bound for $|\langle DX_{j},DX_{k}\rangle_{\mathcal{H}}|$
for all $1\leq j,k\leq N$. When $1\leq j=k\leq N$, $\langle DX_{j},DX_{k}\rangle_{\mathcal{H}}=1$;
when $1\leq j<k\leq N$, by (\ref{eq:std_increment_cov}) and (\ref{eq:std_increment_Hprod}),
\[
\begin{aligned}\langle DX_{i},DX_{j}\rangle_{\mathcal{H}} & =\mathbb{E}[X_{j}X_{k}]\\
 & =\frac{1}{2}\left[\left(k-j+1\right)^{2H}+\left(k-j-1\right)^{2H}-2\left(k-j\right)^{2H}\right].
\end{aligned}
\]

Set $g(x)=\frac{1}{2}\left[(x+1)^{2H}+(x-1)^{2H}-2x^{2H}\right]$.
Observe that when $H<\frac{1}{2}$, $x^{2H}$ is concave, so $g(x)\leq0$,
and similarly when $H>\frac{1}{2}$, $g(x)\geq0$. The derivative
of $g$ is given by 
\[
g'(x)=H\left[\left((x+1)^{2H-1}-x^{2H-1}\right)-\left(x^{2H-1}-(x-1)^{2H-1}\right)\right].
\]
Using the fact that the function $x^{2H-1}$ is convex if $H\in\left(0,\frac{1}{2}\right)$,
we deduce that when $H\in\left(0,\frac{1}{2}\right)$, $g'(x)\geq0$.
As $k-j\in\{1,2,\cdots,N-1\}$, it follows that
\[
\left\vert \langle DX_{j},DX_{k}\rangle_{\mathcal{H}}\right\vert \leq2^{2H-1}-1.
\]
When $H\in\left(\frac{1}{2},1\right)$, $g'(x)\leq0$ and thus $|\langle DX_{j},DX_{k}\rangle_{\mathcal{H}}|\leq2^{2H-1}-1$
. Set $C_{H}=\max\left\{ 2^{2H-1}-1,1\right\} $, then $|\langle DX_{j},DX_{k}\rangle_{\mathcal{H}}|\leq C_{H}$
for all $1\leq j,k\leq N$. Moreover, as $H$ takes values in $(0,1)$,
$C_{H}\leq1$.

Therefore, by (\ref{eq:cap_prob_Hnorm}), together with (\ref{eq:cut_off_deriv_con}),
it follows that
\[
\lVert D^{l}Y\rVert_{\mathcal{H}^{\otimes l}}^{2}\leq N^{2l}\left(\frac{M_{r}}{c}\right)^{2l}\mathds{1}_{\prod_{i=1}^{N}(a_{i}-c_{i},b_{i}+c_{i})}(X_{1},\cdots,X_{N})C_{H}^{l}
\]
for all $l\leq r$. Hence
\[
\left|\lVert D^{l}Y\rVert_{\mathcal{H}^{\otimes l}}\right|^{p}\leq N^{lp}C_{H}^{lp/2}\left(\frac{M_{r}}{c}\right)^{lp}\mathds{1}_{\prod_{i=1}^{N}(a_{i}-c_{i},b_{i}+c_{i})}(X_{1},\cdots,X_{N}).
\]
By the definition of $(p,r)$-capacity,
\begin{align*}
 & \left[c_{p,r}\left(\bigcap_{i=1}^{N}\left\{ a_{i}<X_{i}<b_{i}\right\} \right)\right]^{p}\\
\leq & \vphantom{\biggr\updownarrow}\hphantom{(}\lVert Y\rVert_{\mathbb{D}_{r}^{p}}^{p}\\
= & \vphantom{\vphantom{\biggr\updownarrow}}\hphantom{(}\mathbb{E}\left[\lvert Y\rvert^{p}\right]+\sum_{l=1}^{r}\mathbb{E}\left[\left|\lVert D^{l}Y\rVert_{\mathcal{H}^{\otimes l}}\right|^{p}\right]\\
\leq & \hphantom{(}P\left(\bigcap_{i=1}^{N}\left\{ a_{i}-c_{i}<X_{i}<b_{i}+c_{i}\right\} \right)\\
 & +\sum_{l=1}^{r}\left(N^{lp}C_{H}^{lp/2}\left(\frac{M_{r}}{c}\right)^{lp}\right)P\left(\bigcap_{i=1}^{N}\left\{ a_{i}-c_{i}<X_{i}<b_{i}+c_{i}\right\} \right)\\
= & \left[\sum_{l=0}^{r}N^{lp}C_{H}^{lp/2}\left(\frac{M_{r}}{c}\right)^{lp}\right]P\left(\bigcap_{i=1}^{N}\left\{ a_{i}-c_{i}<X_{i}<b_{i}+c_{i}\right\} \right).
\end{align*}
\end{proof}
Throughout this paper, we always use the notation $X_{\cdot}$ to
denote normalised increment of fBM, though it may refer to increment
over time interval of different length, it always has standard Gaussian
distribution.

The third technical lemma we need is a $(2,1)$-capacity estimate
on the supremum process for fBM with Hurst parameter $H\in\left(0,\frac{1}{2}\right)$,
whose proof is quite technical due to lack of suitable tools such
as Doob's maximal inequality for martingales. We overcome the difficulties
by carefully applying Slepian's lemma for related Gaussian processes. 
\begin{lem}
\label{lem:supremum inequality} Let $0\leq s<t$. For $H\in(0,1)$
and $\eta>0$,
\begin{equation}
c_{2,1}\left(\sup_{s\leq u\leq t}\left(B_{u}-B_{s}\right)>\eta\right)\leq C_{s,t,\eta,H}\cdot\exp\left(-\frac{\eta^{2}}{4\left[\gamma_{H}(t-s)^{2H}+(t-s)\right]}\right),\label{eq:sup_ineq_1}
\end{equation}
and 
\begin{equation}
c_{2,1}\left(\sup_{s\leq u\leq t}\left\vert B_{u}-B_{s}\right\vert >\eta\right)\leq\sqrt{2}C_{s,t,\eta,H}\cdot\exp\left(-\frac{\eta^{2}}{4\left[\gamma_{H}(t-s)^{2H}+(t-s)\right]}\right),\label{eq:sup_ineq_2}
\end{equation}
\begin{equation}
c_{2,1}\left(\sup_{s\leq u\leq t}\left\vert B_{t}-B_{u}\right\vert >\eta\right)\leq\sqrt{2}C_{s,t,\eta,H}\cdot\exp\left(-\frac{\eta^{2}}{4\left[\gamma_{H}(t-s)^{2H}+(t-s)\right]}\right),\label{eq:sup_ineq_3}
\end{equation}
where 
\begin{equation}
\gamma_{H}=\begin{cases}
1, & H\leq\frac{1}{2},\\
\frac{3}{2}, & H>\frac{1}{2},
\end{cases}\label{eq:gamma_H}
\end{equation}
and 
\[
C_{s,t,\eta,H}=\sqrt{\frac{\eta^{2}(t-s)^{2H}}{2\left[\gamma_{H}(t-s)^{2H}+(t-s)\right]^{2}}+2}.
\]
\end{lem}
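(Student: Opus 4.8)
The plan is to bound the $(2,1)$-capacity of $\{S>\eta\}$, where $S:=\sup_{s\le u\le t}(B_u-B_s)$, by applying the Chebyshev inequality for $c_{2,1}$ to $\varphi=e^{\frac{\alpha}{2}S}$ and then optimising over $\alpha>0$, in the spirit of Lemma \ref{increment inequality} but with a single increment replaced by a supremum. I would first reduce to a finite maximum. Fix the dyadic points $t_k=s+2^{-n}k(t-s)$, $0\le k\le 2^n$, and set $S_n:=\max_{0\le k\le 2^n}(B_{t_k}-B_s)$. Working with the quasi-continuous modification of the excerpt's Remark, path continuity gives $\{S>\eta\}=\bigcup_n\{S_n>\eta\}$ as an increasing union, so by the lower continuity \eqref{eq:cap_lower_continuous} it suffices to bound $c_{2,1}(S_n>\eta)$ uniformly in $n$ and let $n\to\infty$.

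For the finite maximum I would record that $S_n\in\mathbb{D}_1^p$ with $DS_n=\sum_k\mathds{1}_{A_k}\,D(B_{t_k}-B_s)$, where $A_k$ is the a.s.\ defined event on which the maximum is attained at $t_k$. Since the $A_k$ partition $\boldsymbol{W}$ up to a null set and $\|DB_{t_k}-DB_s\|_{\mathcal{H}}^2=\mathbb{E}[(B_{t_k}-B_s)^2]=(t_k-s)^{2H}$ (cf.\ \eqref{eq:std_increment_Hprod}), this gives the pathwise bound $\|DS_n\|_{\mathcal{H}}^2\le(t-s)^{2H}$. Repeating the cut-off and closability argument of Lemma \ref{increment inequality} for $x\mapsto e^{\frac{\alpha}{2}x}$ composed with $S_n$ then shows $e^{\frac{\alpha}{2}S_n}\in\mathbb{D}_1^2$ with $De^{\frac{\alpha}{2}S_n}=\frac{\alpha}{2}e^{\frac{\alpha}{2}S_n}DS_n$, whence
\[
\big\|e^{\frac{\alpha}{2}S_n}\big\|_{\mathbb{D}_1^2}^2\le\mathbb{E}\big[e^{\alpha S_n}\big]\Big(1+\tfrac{\alpha^2}{4}(t-s)^{2H}\Big).
\]
Thus everything reduces to the scalar estimate $\mathbb{E}[e^{\alpha S_n}]\le 2\exp\big(\tfrac{\alpha^2}{2}D\big)$ with $D:=\gamma_H(t-s)^{2H}+(t-s)$.

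This scalar bound is where Slepian's lemma enters, and its use splits according to the sign of the increment correlation \eqref{eq:std_increment_cov}. Write $G_k=B_{t_k}-B_s$. For $H\le\frac12$ I would enlarge the space by an independent Brownian motion $\beta$ and set $\tilde G_k=G_k+(\beta_{t_k}-\beta_s)$; conditioning on $\omega$ and using Jensen for the convex maps $\max$ and $e^{\alpha(\cdot)}$, the mean-zero independent perturbation only raises the moment generating function, so $\mathbb{E}[e^{\alpha\max_k G_k}]\le\mathbb{E}[e^{\alpha\max_k\tilde G_k}]$. Now $\tilde G_k$ has variance $\rho_k:=(t_k-s)^{2H}+(t_k-s)$, and since the fBM cross terms $\mathbb{E}[(B_{t_j}-B_s)(B_{t_k}-B_{t_j})]$ are non-positive for $H\le\frac12$, comparison with the time-changed Brownian motion $\hat G_k=\beta_{\rho_k}$ — same variances $\rho_k$, larger covariances $\rho_{j\wedge k}$ — puts us exactly in the hypotheses of Slepian's inequality, giving $P(\max_k\tilde G_k>x)\le P(\max_k\hat G_k>x)$ for all $x$. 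Integrating this against $\alpha e^{\alpha x}\,dx$ and invoking the reflection principle $\sup_{0\le\tau\le D}\beta_\tau\overset{d}{=}|\beta_D|$ (here $\rho_{2^n}=D$ with $\gamma_H=1$) yields $\mathbb{E}[e^{\alpha\max_k G_k}]\le\mathbb{E}[e^{\alpha|\beta_D|}]\le 2e^{\frac{\alpha^2}{2}D}$. For $H>\frac12$ the cross terms are non-negative, the boost is unnecessary, and Slepian applied directly to $G_k$ against $\beta_{(t_k-s)^{2H}}$ gives $\mathbb{E}[e^{\alpha\max_k G_k}]\le 2e^{\frac{\alpha^2}{2}(t-s)^{2H}}$, which is dominated by the target bound with $\gamma_H=\frac32$. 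Inserting this into the norm estimate and choosing $\alpha=\eta/D$ converts the exponent into $e^{-\eta^2/(4D)}$ and the prefactor into exactly $C_{s,t,\eta,H}$; letting $n\to\infty$ gives \eqref{eq:sup_ineq_1}. Then \eqref{eq:sup_ineq_2} follows by running the same argument with $\varphi=e^{\frac{\alpha}{2}\sup_u|B_u-B_s|}$ and the bound $\mathbb{E}[e^{\alpha\sup_u|B_u-B_s|}]\le 2\mathbb{E}[e^{\alpha S}]$ coming from $-B\overset{d}{=}B$, which contributes the extra $\sqrt2$; and \eqref{eq:sup_ineq_3} follows from \eqref{eq:sup_ineq_2} via the time-reversal symmetry $u\mapsto s+t-u$ of the increments of fBM on $[s,t]$.

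The \emph{main obstacle} is the scalar moment generating bound: one must engineer the Gaussian comparison so that the reflection principle becomes available, and the two admissible directions of the increment correlation force the split into $H\le\frac12$ and $H>\frac12$ and hence the two values of $\gamma_H$. A secondary but genuine technical point is the rigorous verification that $S_n$, and then $e^{\frac{\alpha}{2}S_n}$, belongs to the Sobolev space with the claimed derivative; this needs the a.s.\ uniqueness of the argmax together with the same cut-off and closability bookkeeping used in the proof of Lemma \ref{increment inequality}.
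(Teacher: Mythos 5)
Your overall architecture --- reduce to a finite maximum $S_n$ over dyadic points, show $e^{\frac{\alpha}{2}S_n}\in\mathbb{D}_1^2$ with $\lVert DS_n\rVert_{\mathcal H}^2\le(t-s)^{2H}$, bound the Sobolev norm through a moment generating function estimate for the maximum, apply Chebyshev for the capacity and optimise $\alpha$ --- coincides with the paper's, and your passage to the limit via the lower continuity \eqref{eq:cap_lower_continuous} is a legitimate variant of the paper's Step 3, which instead passes to the limit inside $\mathbb{D}_1^2$ using weak compactness of the derivatives. Your treatment of $H>\frac12$ is correct and in fact gives a slightly sharper constant than the paper's.

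The genuine gap is in the case $H\le\frac12$, which is the heart of the lemma and the only range used later for the LIL and self-avoidance results: your Slepian comparison points the wrong way. You correctly compute that $\tilde G_k=G_k+(\beta_{t_k}-\beta_s)$ has variances $\rho_k$ and covariances $\mathbb{E}[\tilde G_j\tilde G_k]\le\rho_{j\wedge k}=\mathbb{E}[\hat G_j\hat G_k]$, i.e.\ the time-changed Brownian motion $\hat G_k=\beta_{\rho_k}$ is the \emph{more} positively correlated of the two vectors with equal variances. Slepian's inequality then yields that the more correlated vector has the stochastically \emph{smaller} maximum, that is $P(\max_k\hat G_k>x)\le P(\max_k\tilde G_k>x)$ --- the reverse of what you claim. (Sanity check with $n=2$ and unit variances: for two independent standard Gaussians $P(\max>x)=1-(1-q)^2\approx 2q$ with $q=P(g>x)$, while for two perfectly correlated ones $P(\max>x)=q$; lowering the correlation raises the maximum.) So the time-changed Brownian motion cannot serve as an upper bound, the reflection-principle step is unavailable, and the key estimate $\mathbb{E}[e^{\alpha S_n}]\le2e^{\alpha^2D/2}$ is not established. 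The paper gets around exactly this obstacle differently: it compares $Z_i=B_{t_i}-B_s$ with $Y_i=(\omega_{t_i}-\omega_s)+\xi_{s,t}$, where $\xi_{s,t}\sim N(0,\gamma_H(t-s)^{2H})$ is a single common Gaussian independent of $\omega$, so that \emph{every} entry of the covariance matrix, diagonal included, dominates the corresponding entry for $Z$; the comparison is then made through the convex-increasing-function form of the comparison theorem (Theorem 3.11 in Ledoux--Talagrand) rather than the equal-variance tail form, after which independence of $\xi_{s,t}$ factors the moment generating function and the reflection principle is applied to $\sup_u(\omega_u-\omega_s)$ alone. A further, minor, point: deducing \eqref{eq:sup_ineq_3} from \eqref{eq:sup_ineq_2} ``by time reversal'' is a statement about laws, which does not by itself transfer to capacities; one must re-run the Sobolev-norm argument for $\sup_u|B_t-B_u|$, which fortunately uses only the law (for the MGF) and the same derivative bound.
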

\begin{proof}
We shall follow the same ideas as for the proof of Proposition 2 and
3 in \citep{Fukushima1984}, while we have to overcome several difficulties
arising from the fact that the distribution of supremum process is
not known for fBM. When $H=\frac{1}{2}$, the above inequality is
covered by the result due to Fukushima in \citep{Fukushima1984}.

We prove (\ref{eq:sup_ineq_1}) and (\ref{eq:sup_ineq_2}) first.
For simplicity, define $M_{s,t}^{*}=\sup_{s\leq u\leq t}\left(B_{u}-B_{s}\right)$
for any $0\leq s<t$. Following Fukushima's notation in \citep{Fukushima1984},
for $s<t_{1}<\cdots<t_{n}\leq t,$ let us define $B_{s;t_{1},\cdots,t_{n}}=\left(B_{t_{1}}-B_{s},\cdots,B_{t_{n}}-B_{s}\right)$,
and let $g(x_{1},\cdots,x_{n})=x_{1}\vee\cdots\vee x_{n}$, and define
\[
M_{s;t_{1},\cdots,t_{n}}=g(B_{s;t_{1},\cdots,t_{n}})=\max_{1\leq i\leq n}\left(B_{t_{i}}-B_{s}\right).
\]

We proceed in 4 steps. 

\textbf{Step 1.} In this step, only the law of fBM will be involved,so
the argument is applicable to various Gaussian processes. As $t_{i}$'s
are fixed in the first two steps, we simplify our notations by writing
$B_{s,t}^{(n)}=B_{s;t_{1},\cdots,t_{n}}$ and $M_{s,t}^{(n)}=M_{s;t_{1},\cdots,t_{n}}$
for the moment. In this step, we establish an upper bound for $\mathbb{E}\left[e^{\alpha M_{s,t}^{(n)}}\right]$,
where $\alpha>0$. 

Consider the following correlation: 
\begin{equation}
\mathbb{E}\left[\left(B_{t_{i}}-B_{s}\right)\left(B_{t_{j}}-B_{s}\right)\right]=\mathbb{E}\left[\left(B_{t_{i}}-B_{s}\right)^{2}+\left(B_{t_{i}}-B_{s}\right)\left(B_{t_{j}}-B_{t_{i}}\right)\right].\label{eq:Correlation_increment}
\end{equation}

When $H<\frac{1}{2}$, for any $1\leq i\leq j\leq n$, the increments
of $\left(B_{t}\right)_{t\geq0}$ over different time intervals are
negatively correlated, which leads to
\begin{equation}
\begin{aligned}\mathbb{E}\left[\left(B_{t_{i}}-B_{s}\right)\left(B_{t_{j}}-B_{s}\right)\right] & \leq\mathbb{E}\left[\left(B_{t_{i}}-B_{s}\right)^{2}\right]\\
 & =(t_{i}-s)^{2H}\\
 & \leq(t-s)^{2H}.
\end{aligned}
\label{eq:Correlation_bound_H<1/2}
\end{equation}

When $H>\frac{1}{2}$, we seek for an upper bound of 
\[
\mathbb{E}\left[\left(B_{t_{i}}-B_{s}\right)\left(B_{t_{j}}-B_{t_{i}}\right)\right].
\]
We compute that
\[
\mathbb{E}\left[\left(B_{t_{i}}-B_{s}\right)\left(B_{t_{j}}-B_{t_{i}}\right)\right]=\frac{1}{2}\left[(t_{j}-s)^{2H}-(t_{j}-t_{i})^{2H}-(t_{i}-s)^{2H}\right]\leq\frac{1}{2}(t-s)^{2H},
\]
where $0\leq s<t_{i}<t_{j}\leq t$. Combining with (\ref{eq:Correlation_increment}),
we have 
\[
\mathbb{E}\left[\left(B_{t_{i}}-B_{s}\right)\left(B_{t_{j}}-B_{s}\right)\right]\leq\frac{3}{2}(t-s)^{2H}.
\]

Therefore, for all $H\in(0,1)$, 
\[
\mathbb{E}\left[\left(B_{t_{i}}-B_{s}\right)\left(B_{t_{j}}-B_{s}\right)\right]\leq\gamma_{H}(t-s)^{2H},
\]
where $\gamma_{H}$ is defined as in (\ref{eq:gamma_H}).

For convenience, set $Z_{i}=B_{t_{i}}-B_{s}\sim N(0,(t_{i}-s)^{2H})$,
and by the above estimate, correlations between any two $Z_{i}$'s
are bounded by $\gamma_{H}(t-s)^{2H}$. We want to apply Slepian's
lemma (see \citep{Ledoux2013}) to overcome the difficulties in finding
the distribution of supremum process of fBM, so we take a random variable
$\xi_{s,t}\sim N(0,\gamma_{H}(t-s)^{2H})$ independent of the standard
Brownian motion $(\omega_{t})_{t\geq0}$ on $(\boldsymbol{W},\mathcal{H},P)$,
so that $\xi_{s,t}$ and $\omega_{t_{i}}-\omega_{s}$ are independent
for all $i\in\{1,2,\cdots,n\}$. Define $Y_{i}=\omega_{t_{i}}-\omega_{s}+\xi_{s,t}$,
$1\leq i\leq n$ and let 
\[
N_{s,t}^{(n)}=\max_{1\leq i\leq n}Y_{i}=\max_{1\leq i\leq n}\left(\omega_{t_{i}}-\omega_{s}\right)+\xi_{s,t}.
\]
Then by independence, 
\[
\begin{aligned}\mathbb{E}\left[Y_{i}Y_{j}\right] & =\mathbb{E}\left[(\omega_{t_{i}}-\omega_{s}+\xi_{s,t})(\omega_{t_{j}}-\omega_{s}+\xi_{s,t})\right]\\
 & =\mathbb{E}\left[(\omega_{t_{i}}-\omega_{s})(\omega_{t_{j}}-\omega_{s})\right]+\mathbb{E}\left[\xi_{s,t}^{2}\right]\\
 & =t_{i}-s+\gamma_{H}(t-s)^{2H}\\
 & \geq\gamma_{H}(t-s)^{2H},
\end{aligned}
\]
for $1\leq i\leq j\leq n$, and hence by (\ref{eq:Correlation_bound_H<1/2}),
\[
\mathbb{E}[Z_{i}Z_{j}]\leq\mathbb{E}[Y_{i}Y_{j}]
\]
for all $1\leq i,j\leq n$. Since both exponential function and maximum
function are convex, their composition is also convex, and hence according
to Theorem 3.11 in Ledoux and Talagrand \citep{Ledoux2013}, Slepian's
lemma, we obtain that 
\[
\mathbb{E}\left[e^{\alpha M_{s,t}^{(n)}}\right]=\mathbb{E}\left[e^{\alpha\max_{1\leq i\leq n}Z_{i}}\right]\leq\mathbb{E}\left[e^{\alpha\max_{1\leq i\leq n}Y_{i}}\right]=\mathbb{E}\left[e^{\alpha N_{s,t}^{(n)}}\right],
\]
for all $\alpha>0$. Due to independence and the fact that $\max_{1\leq i\leq n}\left(\omega_{t_{i}}-\omega_{s}\right)\leq\sup_{s\leq u\leq t}\left(\omega_{u}-\omega_{s}\right)$,
\begin{align*}
\mathbb{E}\left[e^{\alpha N_{s,t}^{(n)}}\right] & =\mathbb{E}\left[e^{\alpha\xi_{s,t}}\right]\mathbb{E}\left[\exp\left(\alpha\max_{1\leq i\leq n}\left(\omega_{t_{i}}-\omega_{s}\right)\right)\right]\\
 & \leq\exp\left(\frac{\alpha^{2}}{2}\gamma_{H}(t-s)^{2H}\right)\mathbb{E}\left[\exp\left(\alpha\sup_{s\leq u\leq t}\left(\omega_{u}-\omega_{s}\right)\right)\right].
\end{align*}
Using the distribution of supremum of standard Brownian motion, we
obtain that 
\begin{equation}
\mathbb{E}\left[e^{\alpha M_{s,t}^{(n)}}\right]=\mathbb{E}\left[\exp\left(\alpha M_{s;t_{1},\cdots,t_{n}}\right)\right]\leq2\exp\left(\frac{\alpha^{2}}{2}\left[\gamma_{H}(t-s)^{2H}+(t-s)\right]\right).\label{eq:mgf_sup_BM}
\end{equation}

\textbf{Step 2. }The difference from classical approach will be demonstrated
in this step since we use only the Brownian motion capacity. In this
step, we show that $e^{\frac{\alpha}{2}M_{s,t}^{(n)}}\in\mathbb{D}_{1}^{2}$
and 
\[
\begin{aligned}De^{\frac{\alpha}{2}M_{s,t}^{(n)}} & =\frac{\alpha}{2}\exp\left(\frac{\alpha}{2}M_{s;t_{1},\cdots,t_{n}}\right)DM_{s,t}^{(n)}.\end{aligned}
\]

Observe that $g$ is Lipschitz, so by Proposition 1.2.4 in Nualart
\citep{Nualart2006}, $M_{s,t}^{(n)}=g(B_{s,t}^{(n)})\in\mathbb{D}_{1}^{2}$,
and the chain rule applies, which is
\[
\begin{aligned}DM_{s,t}^{(n)}(u) & =\sum_{i=1}^{n}\mathds{1}_{\{M_{s,t}^{(n)}=B_{t_{i}}-B_{s}\}}(B_{s,t}^{(n)})D(B_{t_{i}}-B_{s})\\
 & =\sum_{i=1}^{n}\mathds{1}_{\{M_{s,t}^{(n)}=B_{t_{i}}-B_{s}\}}(B_{s,t}^{(n)})\left[K(t_{i},u)\mathds{1}_{[0,t_{i}]}(u)-K(s,u)\mathds{1}_{[0,s]}(u)\right].
\end{aligned}
\]
Therefore, we have
\begin{align}
\langle DM_{s,t}^{(n)},DM_{s,t}^{(n)}\rangle_{\mathcal{H}} & =\sum_{i=1}^{n}\mathds{1}_{\{M_{s,t}^{(n)}=B_{t_{i}}-B_{s}\}}(B_{s,t}^{(n)})\nonumber \\
 & \hphantom{=}\cdot\int_{0}^{\infty}\left[K(t_{i},u)\mathds{1}_{[0,t_{i}]}(u)-K(s,u)\mathds{1}_{[0,s]}(u)\right]^{2}du\nonumber \\
 & =\sum_{i=1}^{n}\mathds{1}_{\{M_{s,t}^{(n)}=B_{t_{i}}-B_{s}\}}(B_{s,t}^{(n)})(t_{i}-s)^{2H}\nonumber \\
 & \leq(t-s)^{2H}\sum_{i=1}^{n}\mathds{1}_{\{M_{s,t}^{(n)}=B_{t_{i}}-B_{s}\}}.\label{eq:max_Hnorm}
\end{align}
Similar to the argument in lemma \ref{increment inequality}, we set
$f(x)=e^{\frac{\alpha}{2}x}$, and $\psi_{N}(x)$ as in lemma \ref{increment inequality},
and set $f_{N}=f\cdot\psi_{N}$. For simplicity, denote $F=e^{\frac{\alpha}{2}M_{s,t}^{(n)}}$,
and $F_{N}=f_{N}(M_{s,t}^{(n)})$. Then since $f_{N}\in C_{0}^{\infty}(\mathbb{R})$,
the chain rule applies, and 
\[
DF_{N}=f_{N}'(M_{s,t}^{(n)})DM_{s,t}^{(n)}.
\]
Similarly, we have that 
\begin{align*}
 & \hphantom{=\ \ }\mathbb{E}\left[\big\Vert DF_{N}-\frac{\alpha}{2}e^{\frac{\alpha}{2}M_{s,t}^{(n)}}DM_{s,t}^{(n)}\big\Vert_{\mathcal{H}}^{2}\right]\\
 & =\int_{X}\left|f'(M_{s,t}^{(n)})\psi_{N}(M_{s,t}^{(n)})+f(M_{s,t}^{(n)})\psi_{N}'(M_{s,t}^{(n)})-\frac{\alpha}{2}e^{\frac{\alpha}{2}M_{s,t}^{(n)}}\right|^{2}\lVert DM_{s,t}^{(n)}\rVert_{\mathcal{H}}^{2}dP\\
 & \leq\int_{X}\left|\frac{\alpha}{2}e^{\frac{\alpha}{2}M_{s,t}^{(n)}}\left(\psi_{N}(M_{s,t}^{(n)})-1\right)+e^{\frac{\alpha}{2}M_{s,t}^{(n)}}\psi_{N}'(M_{s,t}^{(n)})\right|^{2}\left(\sum_{i=1}^{n}\mathds{1}_{\{M_{s,t}^{(n)}=B_{t_{i}}-B_{s}\}}\right)(t-s)^{2H}dP\\
 & \leq2\mathbb{E}\left[\left|\frac{\alpha}{2}e^{\frac{\alpha}{2}M_{s,t}^{(n)}}\cdot\mathds{1}_{\{|M_{s,t}^{(n)}|\geq N\}}\right|^{2}+\left|e^{\frac{\alpha}{2}M_{s,t}^{(n)}}C\cdot\mathds{1}_{\{|M_{s,t}^{(n)}|\geq N\}}\right|^{2}\right](t-s)^{2H},
\end{align*}
which tends to zero as $N\to\infty$, where $C$ is defined as in
lemma \ref{increment inequality}. Therefore, since $F_{N}\to F$
in $L^{2}(\boldsymbol{W})$, $DF_{N}\to\frac{\alpha}{2}e^{\frac{\alpha}{2}M_{s,t}^{(n)}}DM_{s,t}^{(n)}$
in $L^{2}(\boldsymbol{W};\mathcal{H})$ and $D$ is closable from
$L^{2}(\boldsymbol{W})$ to $L^{2}(\boldsymbol{W};\mathcal{H})$,
it follows that 
\begin{equation}
DF=\frac{\alpha}{2}e^{\frac{\alpha}{2}M_{s,t}^{(n)}}DM_{s,t}^{(n)}\label{eq: Dexp(aM^n_s,t)}
\end{equation}
 and $F\in\mathbb{D}_{1}^{2}$.

\textbf{Step 3.} In this step, we find an upper bound for $\mathbb{E}\left[e^{\alpha M_{s,t}^{*}}\right]$
for any $\alpha>0$, then we prove that $e^{\alpha M_{s,t}^{*}}\in\mathbb{D}_{1}^{2}$
and find an upper bound for $\Vert e^{\alpha M_{s,t}^{*}}\Vert_{\mathbb{D}_{1}^{2}}$.
As $M_{s;t_{1},\cdots,t_{n}}$ increases to $M_{s,t}^{*}$ when we
refine the partition and let $n$ go to infinity, the monotone convergence
theorem and (\ref{eq:mgf_sup_BM}) implies that 
\[
\mathbb{E}\left[e^{\alpha M_{s,t}^{*}}\right]\leq2\exp\left(\frac{\alpha^{2}}{2}\left[\gamma_{H}(t-s)^{2H}+(t-s)\right]\right).
\]

We have already proved that $e^{\alpha M_{s;t_{1},\cdots,t_{n}}}\in\mathbb{D}_{1}^{2}$
in last step, by (\ref{eq:max_Hnorm}) and (\ref{eq: Dexp(aM^n_s,t)}),
\begin{align*}
\langle D\exp\left(\frac{\alpha}{2}M_{s;t_{1},\cdots,t_{n}}\right),D\exp\left(\frac{\alpha}{2}M_{s;t_{1},\cdots,t_{n}}\right)\rangle_{\mathcal{H}} & \leq\frac{\alpha^{2}}{4}(t-s)^{2H}\exp\left(\alpha M_{s;t_{1},\cdots,t_{n}}\right)\\
 & \hphantom{=}\cdot\sum_{i=1}^{n}\mathds{1}_{\{M_{s;t_{1},\cdots,t_{n}}=B_{t_{i}}-B_{s}\}}.
\end{align*}
Therefore, by (\ref{eq:mgf_sup_BM}), we obtain that
\[
\begin{aligned} & \hphantom{\leq\ \ }\mathbb{E}\left[\langle D\exp\left(\frac{\alpha}{2}M_{s;t_{1},\cdots,t_{n}}\right),D\exp\left(\frac{\alpha}{2}M_{s;t_{1},\cdots,t_{n}}\right)\rangle_{\mathcal{H}}\right]\\
 & \leq\frac{\alpha^{2}}{4}(t-s)^{2H}\sum_{i=1}^{n}\int_{\{M_{s;t_{1},\cdots,t_{n}}=B_{t_{i}}-B_{s}\}}\exp\left(\alpha M_{s;t_{1},\cdots,t_{n}}\right)dP\\
 & =\frac{\alpha^{2}}{4}(t-s)^{2H}\mathbb{E}\left[\exp\left(\alpha M_{s;t_{1},\cdots,t_{n}}\right)\right]\\
 & \leq\frac{\alpha^{2}}{2}(t-s)^{2H}\exp\left(\frac{\alpha^{2}}{2}\left[\gamma_{H}(t-s)^{2H}+(t-s)\right]\right),
\end{aligned}
\]
which implies that
\[
\sup_{n}\mathbb{E}\left[\lVert D\exp\left(\frac{\alpha}{2}M_{s;t_{1},\cdots,t_{n}}\right)\rVert_{\mathcal{H}}^{2}\right]<\infty.
\]
Applying Lemma 1.2.3 in \citep{Nualart2006}, we deduce that $e^{\frac{\alpha}{2}M_{s,t}^{*}}\in\mathbb{D}_{1}^{2}$
and 
\[
D\exp\left(\frac{\alpha}{2}M_{s;t_{1},\cdots,t_{n}}\right)\to De^{\frac{\alpha}{2}M_{s,t}^{*}}
\]
 weakly in $L^{2}(\boldsymbol{W};\mathcal{H})$. As a consequence,
we have
\[
\begin{aligned}\mathbb{E}\left[\langle De^{\frac{\alpha}{2}M_{s,t}^{*}},De^{\frac{\alpha}{2}M_{s,t}^{*}}\rangle_{\mathcal{H}}\right] & \leq\liminf_{n\to\infty}\mathbb{E}\left[\langle D\exp\left(\frac{\alpha}{2}M_{s;t_{1},\cdots,t_{n}}\right),D\exp\left(\frac{\alpha}{2}M_{s;t_{1},\cdots,t_{n}}\right)\rangle_{\mathcal{H}}\right]\\
 & \leq\frac{\alpha^{2}}{4}(t-s)^{2H}\mathbb{E}\left[e^{\alpha M_{s,t}^{*}}\right]\\
 & \leq\frac{\alpha^{2}}{2}(t-s)^{2H}\exp\left(\frac{\alpha^{2}}{2}\left[\gamma_{H}(t-s)^{2H}+(t-s)\right]\right).
\end{aligned}
\]
Therefore,
\begin{equation}
\begin{aligned}\big\Vert e^{\frac{\alpha}{2}M_{s,t}^{*}}\big\Vert_{\mathbb{D}_{1}^{2}}^{2} & =\mathbb{E}\left[e^{\alpha M_{s,t}^{*}}\right]+\mathbb{E}\left[\langle De^{\frac{\alpha}{2}M_{s,t}^{*}},De^{\frac{\alpha}{2}M_{s,t}^{*}}\rangle_{\mathcal{H}}\right]\\
 & \leq\left(\frac{\alpha^{2}}{2}(t-s)^{2H}+2\right)\exp\left(\frac{\alpha^{2}}{2}\left[\gamma_{H}(t-s)^{2H}+(t-s)\right]\right).
\end{aligned}
\label{eq:M*_1,2_norm}
\end{equation}

\textbf{Step 4. }By Chebyshev's inequality for capacity and (\ref{eq:M*_1,2_norm}),
we thus have
\begin{equation}
\begin{aligned} & \hphantom{=\ \ }\left[c_{2,1}\left(M_{s,t}^{*}-\frac{\alpha}{2}(t-s)^{2H}>\beta\right)\right]^{2}\\
 & =\left[c_{2,1}\left(\frac{\alpha}{2}M_{s,t}^{*}-\frac{\alpha^{2}}{4}(t-s)^{2H}>\frac{\alpha\beta}{2}\right)\right]^{2}\\
 & =\left[c_{2,1}\left(\exp\left(\frac{\alpha}{2}M_{s,t}^{*}\right)>\exp\left(\frac{\alpha\beta}{2}+\frac{\alpha^{2}}{4}(t-s)^{2H}\right)\right)\right]^{2}\\
 & \leq\exp\left(-\alpha\beta-\frac{\alpha^{2}}{2}(t-s)^{2H}\right)\big\Vert e^{\frac{\alpha}{2}M_{s,t}^{*}}\big\Vert_{\mathbb{D}_{1}^{2}}^{2}\\
 & \leq\left(\frac{\alpha^{2}}{2}(t-s)^{2H}+2\right)\exp\left(-\alpha\beta+\frac{\alpha^{2}}{2}\left[(\gamma_{H}-1)(t-s)^{2H}+(t-s)\right]\right)
\end{aligned}
\label{eq:sup_chebyshev}
\end{equation}
for any positive constants $\alpha$ and $\beta$. 

Notice that the exponential function is the dominating part in the
last term of (\ref{eq:sup_chebyshev}), so we optimise the above quantity
by minimising the exponent and setting 
\[
\alpha=\frac{\eta}{\gamma_{H}(t-s)^{2H}+(t-s)},
\]
and 
\[
\beta=\eta-\frac{\alpha}{2}(t-s)^{2H}.
\]
Therefore, we get that
\[
\left[c_{2,1}\left(M_{s,t}^{*}>\eta\right)\right]^{2}\leq C_{s,t,\eta,H}^{2}\exp\left(-\frac{\eta^{2}}{2\left[\gamma_{H}(t-s)^{2H}+(t-s)\right]}\right),
\]
where 
\[
C_{s,t,\eta,H}=\sqrt{\frac{\eta^{2}(t-s)^{2H}}{2\left[\gamma_{H}(t-s)^{2H}+(t-s)\right]^{2}}+2}.
\]
Moreover, by replacing $B$ with $-B$, it follows that
\[
\left[c_{2,1}\left(\sup_{s\leq u\leq t}\left|B_{u}-B_{s}\right|>\eta\right)\right]^{2}\leq2C_{s,t,\eta,H}^{2}\exp\left(-\frac{\eta^{2}}{2\left[\gamma_{H}(t-s)^{2H}+(t-s)\right]}\right).
\]

Finally, (\ref{eq:sup_ineq_3}) may be established directly following
the same argument with slight modification in the definition of $M_{s,t}^{*}$.
\end{proof}
\begin{rem}
The results in the previous lemma can be considered as the maximal
inequality for fBMs but with respect to Brownian motion capacity.
For a similar result when $H=\frac{1}{2}$, one may refer to Fukushima
\citep{Fukushima1984}, or Takeda \citep{Takeda1984} for any $r\in\mathbb{N}$
and $p\in(1,\infty)$. Though we establish the inequalities for all
$H\in(0,1)$, when considering a sufficiently small time interval
$[s,t]$, the result looks weaker when $H>\frac{1}{2}$ due to the
appearance of $(t-s)$ in the exponent. In fact, when $H>\frac{1}{2}$,
$(t-s)$ will be the dominating part rather than $(t-s)^{2H}$. However,
the factor $(t-s)$ appears necessary for small time intervals.
\end{rem}

\section{Modulus of Continuity}

In this part, we shall show the result on modulus of continuity for
fractional Brownian motion with respect to the $(p,r)$-capacity defined
on classical Wiener space. We shall adopt the arguments in Fukushima's
work \citep{Fukushima1984} and the original proof by L\'{e}vy \citep{Levy1937},
who proved the modulus of continuity of Brownian motion in probability
sense. 
\begin{thm}
Let $(B_{t})_{t\geq0}$ be an fBM with Hurst parameter $H$. Then
it holds that 
\begin{equation}
\limsup_{\delta\downarrow0}\frac{1}{\sqrt{2\delta^{2H}\log(1/\delta)}}\max_{\substack{\mathsmaller{0\leq s<t\leq1}\\
\mathsmaller{t-s\leq\delta}
}
}\lvert B_{t}-B_{s}\rvert\leq1,\quad\text{q.s.}\label{eq:continuity_upp_bound}
\end{equation}
when $H\in(0,1)$ and 
\begin{equation}
\limsup_{\delta\downarrow0}\frac{1}{\sqrt{2\delta^{2H}\log(1/\delta)}}\max_{\substack{\mathsmaller{0\leq s<t\leq1}\\
\mathsmaller{t-s\leq\delta}
}
}\lvert B_{t}-B_{s}\rvert\geq1,\quad\text{q.s.}\label{eq:continuity_low_bound}
\end{equation}
when $H\in(0,\frac{1}{2}]$.
\end{thm}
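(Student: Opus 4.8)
The plan is to treat the two inequalities separately, reducing each to a Borel--Cantelli argument along the dyadic scales $\delta=2^{-n}$: the increment estimate of Lemma~\ref{increment inequality} drives the upper bound \eqref{eq:continuity_upp_bound}, while the capacity--probability comparison of Lemma~\ref{capacity-probability} together with Slepian's lemma drives the lower bound \eqref{eq:continuity_low_bound}. For \eqref{eq:continuity_upp_bound} it suffices to show, for each fixed $\epsilon>0$, that the set where the $\limsup$ exceeds $1+\epsilon$ is slim, since a countable intersection over $\epsilon\downarrow0$ leaves precisely the quasi-sure event. I would fix the threshold $\eta_n=(1+\epsilon)\sqrt{2\cdot 2^{-2nH}\,n\log 2}$ and apply Lemma~\ref{increment inequality} to each of the $2^n$ adjacent dyadic increments $B_{k2^{-n}}-B_{(k-1)2^{-n}}$; the exponential factor becomes $2^{-(1+\epsilon)^2 n}$ while the bracketed sum is only polynomial in $n$. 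The strong subadditivity \eqref{eq:cap_p_subadditivity} then bounds $[c_{p,r}(A_n)]^p$ for the bad event $A_n=\bigcup_k\{|B_{k2^{-n}}-B_{(k-1)2^{-n}}|>\eta_n\}$ by $M_{p,r}\,2^n\,\mathrm{poly}(n)\,2^{-(1+\epsilon)^2 n}$; taking $p$-th roots, $c_{p,r}(A_n)$ decays geometrically because $(1+\epsilon)^2>1$, so $\sum_n c_{p,r}(A_n)<\infty$ and the capacity Borel--Cantelli lemma gives that, quasi-surely, eventually every scale-$n$ dyadic increment is at most $\eta_n$.

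The remaining and most delicate step for the upper bound is to pass from these dyadic increments to an arbitrary increment $|B_t-B_s|$ with $t-s\le\delta$ while preserving the sharp constant $1$. A naive symmetric telescoping over all finer scales $m>n$ contributes $\sum_{m>n}2^{-mH}\sqrt m$, which is of the \emph{same} order as the leading term and therefore spoils the constant. Instead I would combine the sharp bound at the single coarse scale $2^{-n}\approx\delta$ with a separate, cruder modulus of continuity (any H\"older exponent strictly below $H$, obtained from the same increment estimate by a lossy chaining) to control the two end corrections $|B_s-B_{s'}|$ and $|B_t-B_{t'}|$ over the residual intervals below a chosen grid mesh. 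Choosing the mesh finer than $\delta$ makes these corrections of smaller order than $\sqrt{2\delta^{2H}\log(1/\delta)}$, so the leading constant is dictated solely by the coarse scale; letting $\epsilon\downarrow0$ yields \eqref{eq:continuity_upp_bound} for all $H\in(0,1)$.

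For the lower bound \eqref{eq:continuity_low_bound}, which is claimed only for $H\le\frac12$, the target is to show that quasi-surely some scale-$n$ increment is large. Writing $X_i$ for the normalised increments as in \eqref{eq:defn_std_increment}, the bad event is $E_n=\bigcap_{i=1}^{2^n}\{|X_i|<(1-\epsilon)\sqrt{2n\log 2}\}$, exactly of the product form handled by Lemma~\ref{capacity-probability}; applying it bounds $[c_{p,r}(E_n)]^p$ by a factor polynomial in $2^n$ and $n$ times the probability $P(\tilde E_n)$ of the slightly enlarged event. Here the hypothesis $H\le\frac12$ enters decisively: by \eqref{eq:std_increment_cov} the increments over disjoint intervals are then negatively correlated, so bounding the two-sided intersection by the one-sided event $\{X_i<(1-\epsilon)\sqrt{2n\log 2}+c\}$ and invoking Slepian's lemma gives $P(\tilde E_n)\le\prod_i P\big(X_i<(1-\epsilon)\sqrt{2n\log 2}+c\big)=(1-q_n)^{2^n}$. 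Since $q_n\approx 2^{-(1-\epsilon)^2 n}$ with $(1-\epsilon)^2<1$, this decays like $\exp(-2^{cn})$ and overwhelms the polynomial prefactor even after taking $p$-th roots.

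Summability of $c_{p,r}(E_n)$ and the capacity Borel--Cantelli lemma then force, quasi-surely, that eventually $\max_i|X_i|\ge(1-\epsilon)\sqrt{2n\log 2}$, so that along $\delta=2^{-n}$ the ratio in \eqref{eq:continuity_low_bound} is at least $1-\epsilon$; letting $\epsilon\downarrow0$ along a countable sequence completes the proof. I expect the genuine obstacle to be the upper-bound passage to the continuum with the exact constant, the point being that the end corrections must be shown to be of strictly smaller order; by contrast the restriction $H\le\frac12$ in the lower bound is precisely what makes the Slepian comparison with an independent family available, and for $H>\frac12$ the positive correlation reverses the inequality and this route breaks down.
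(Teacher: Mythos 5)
Your treatment of the lower bound \eqref{eq:continuity_low_bound} is essentially the paper's proof: normalised adjacent dyadic increments, Lemma \ref{capacity-probability} to pass from capacity to probability of a slightly enlarged event, negative correlation of the increments for $H\leq\frac{1}{2}$ plus Slepian's lemma to compare with an independent Gaussian family, and the doubly exponential bound $(1-q_n)^{2^n}\leq\exp(-2^{cn})$ feeding the capacity Borel--Cantelli lemma. No issues there.

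The upper bound is where your plan has a genuine gap. Writing $g(\delta)=\sqrt{2\delta^{2H}\log(1/\delta)}$, you control only the $2^{n}$ \emph{adjacent} dyadic increments at each scale $n$, and then propose to reach an arbitrary pair $s<t$ with $t-s\leq\delta$ by approximating $s,t$ with points $s',t'$ of a mesh $2^{-m}$ finer than $\delta$ and absorbing the two end corrections into a crude H\"older modulus. The end corrections are indeed negligible, but the middle term $|B_{t'}-B_{s'}|$ that remains is an increment between two mesh points separated by roughly $2^{m-n}$ grid units: it is \emph{not} an adjacent increment at any scale you have controlled. Summing the $2^{m-n}$ adjacent scale-$m$ increments it contains gives about $2^{m-n}(1+\epsilon)g(2^{-m})$, which diverges as $m\to\infty$; alternatively, bounding each such pair directly against the fixed threshold $(1+\epsilon)g(\delta)$ and taking a union over the $\sim2^{2m-n}$ admissible pairs destroys summability. (Taking $s',t'$ on the coarse grid $2^{-n}$ instead does not help either: the end corrections then live on intervals of length up to $\delta$, and a H\"older-$\alpha$ bound with $\alpha<H$ gives $O(\delta^{\alpha})$, which dominates $g(\delta)$.) The paper closes exactly this hole by estimating, at each scale $n$, the two-parameter maximum over \emph{all} pairs $0\leq i<j\leq2^{n}$ with $k=j-i\leq2^{n\theta}$ of $|B_{j2^{-n}}-B_{i2^{-n}}|/g(k2^{-n})$, each pair normalised by $g$ of its \emph{own} gap, using Lemma \ref{increment inequality} together with the strong subadditivity \eqref{eq:cap_p_subadditivity}; the pair count $2^{n(1+\theta)}$ is beaten by the decay $2^{-n(1-\theta)(1+\epsilon)^{2}}$ precisely when $1+\theta<(1-\theta)(1+\epsilon)^{2}$, and the continuum statement then follows by dyadic approximation and continuity with no loss in the constant. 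Your argument needs this two-parameter maximum (or an equivalent device); the adjacent-increment bound plus a lossy modulus cannot by itself yield the sharp constant $1$.
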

\begin{proof}
Let us prove (\ref{eq:continuity_low_bound}) first. For any $r\in\mathbb{N}$
and $p\in(1,\infty)$, we want to show that 
\[
c_{p,r}\left(\limsup_{\delta\downarrow0}\frac{1}{g(\delta)}\max_{\substack{\mathsmaller{0\leq s<t\leq1}\\
\mathsmaller{t-s\leq\delta}
}
}\lvert B_{t}-B_{s}\rvert<1\right)=0,
\]
where $g(\delta)=\sqrt{2\delta^{2H}\log(1/\delta)}$. 

By lemma \ref{capacity-probability}, we have
\begin{align*}
 & \hphantom{=\ \ }c_{p,r}\left(\max_{1\leq j\leq2^{n}}\left\vert B_{\frac{j}{2^{n}}}-B_{\frac{j-1}{2^{n}}}\right\vert \leq(1-\theta)g(2^{-n})\right)\\
 & =c_{p,r}\left(\max_{1\leq j\leq2^{n}}2^{nH}\left\vert B_{\frac{j}{2^{n}}}-B_{\frac{j-1}{2^{n}}}\rvert\right\vert \leq(1-\theta)\sqrt{2\log(2^{n})}\right)\\
 & \leq\left[\sum_{k=0}^{r}N^{kp}C_{H}^{kp/2}\left(\frac{M_{r}}{c}\right)^{kp}\right]^{1/p}\left[P\left(\max_{1\leq j\leq2^{n}}2^{nH}\left|B_{\frac{j}{2^{n}}}-B_{\frac{j-1}{2^{n}}}\right|\leq(1-\theta)\sqrt{2\log(2^{n})}+c\right)\right]^{1/p}
\end{align*}
 for $\theta\in(0,1)$, where $c$ is some small constant such that
$c<\theta\sqrt{2\log2}$. Set
\[
X_{j}=2^{nH}\left(B_{\frac{j}{2^{n}}}-B_{\frac{j-1}{2^{n}}}\right)\sim N(0,1),
\]
then $X_{j}\sim N(0,1)$ and when $H\leq\frac{1}{2}$, $\mathbb{E}[X_{j}X_{k}]\leq0$
for $j\neq k$. Take a sequence of independent standard Gaussian random
variables $Y_{j}$'s so that $\mathbb{E}[X_{j}X_{k}]\leq0=\mathbb{E}[Y_{j}Y_{k}]$
for $j\neq k$. Let $c'=\frac{c}{\sqrt{2log}2}$ so that $\theta-c'>0$
and hence $0<1-\theta+c'<1$. Slepian's lemma (see Corollary 3.12,
\citep{Ledoux2013}) implies that
\begin{align*}
 & \hphantom{=\ \ }P\left(\bigcap_{1\leq j\leq2^{n}}\left\{ \lvert X_{j}\rvert\leq(1-\theta+c')\sqrt{2\log(2^{n})}\right\} \right)\\
 & \leq P\left(\bigcap_{1\leq j\leq2^{n}}\left\{ \lvert X_{j}\rvert\leq(1-\theta+c')^{1/2}\sqrt{2\log(2^{n})}\right\} \right)\\
 & \leq P\left(\bigcap_{1\leq j\leq2^{n}}\left\{ X_{j}\leq(1-\theta+c')^{1/2}\sqrt{2\log(2^{n})}\right\} \right)\\
 & \leq P\left(\bigcap_{1\leq j\leq2^{n}}\left\{ Y_{j}\leq(1-\theta+c')^{1/2}\sqrt{2\log(2^{n})}\right\} \right)\\
 & =\prod_{1\leq j\leq2^{n}}P\left(Y_{j}\leq(1-\theta+c')^{1/2}\sqrt{2\log(2^{n})}\right)\\
 & =\left[1-P\left(Y_{j}>(1-\theta+c')^{1/2}\sqrt{2\log(2^{n})}\right)\right]^{2^{n}}\vphantom{\prod_{i}}\\
 & \leq\exp\left(-\xi2^{n}\right),\vphantom{\prod_{i}}
\end{align*}
where 
\begin{align*}
\xi & =P\left(Y_{j}>(1-\theta+c')^{1/2}\sqrt{2\log(2^{n})}\right)\\
 & \geq\frac{(1-\theta+c')^{1/2}\sqrt{2\log(2^{n})}}{1+2(1-\theta+c')\log(2^{n})}\exp\left(-(1-\theta+c')\log(2^{n})\right)\\
 & \geq C2^{-n(1-\theta+c')}
\end{align*}
for $n$ sufficiently large, hence it follows that
\[
P\left(\bigcap_{1\leq j\leq2^{n}}\left\{ \lvert X_{j}\rvert\leq(1-\theta+c')\sqrt{2\log(2^{n})}\right\} \right)\leq\exp\left(-C2^{n(\theta-c')}\right).
\]
The right-hand side is a term of a convergent series, and hence by
the first Borel-Cantelli Lemma for $(p,r)$-capacity, (\ref{eq:continuity_low_bound})
follows immediately.

For the upper bound, we first notice that $g(k2^{-n})=(k2^{-n})^{H}\sqrt{2\log(\frac{2^{n}}{k})}$.
For any $\varepsilon>0$, applying lemma \ref{increment inequality}
with $\eta=(1+\varepsilon)g(k2^{-n})$, we get that 

\begin{align*}
I_{n}^{p} & =\left[c_{p,r}\left(\max_{\substack{\mathsmaller{0<k=j-i\leq2^{n\theta}}\\
\mathsmaller{0\leq i<j\leq2^{n}}
}
}\frac{\left|B_{j2^{-n}}-B_{i2^{-n}}\right|}{g(k2^{-n})}\geq1+\varepsilon\right)\right]^{p}\\
 & \leq M_{p,r}\sum_{\substack{\mathsmaller{0<k=j-i\leq2^{n\theta}}\\
\mathsmaller{0\leq i<j\leq2^{n}}
}
}\left[c_{p,r}\left(\frac{\left|B_{j2^{-n}}-B_{i2^{-n}}\right|}{g(k2^{-n})}\geq1+\varepsilon\right)\right]^{p}\\
 & \leq M_{p,r}2^{n}\sum_{\substack{1\leq k\leq2^{n\theta}}
}\left[2\sum_{l=0}^{r}\left(\frac{(1+\varepsilon)g(k2^{-n})}{p(k2^{-n})^{H}}\right)^{lp}\right](k2^{-n})^{(1+\varepsilon)^{2}}\\
 & =M_{p,r}2^{n}\sum_{\substack{1\leq k\leq2^{n\theta}}
}\left[2\sum_{l=0}^{r}\left(\frac{(1+\varepsilon)}{p}\sqrt{2\log\left(\frac{2^{n}}{k}\right)}\right)^{lp}\right](k2^{-n})^{(1+\varepsilon)^{2}}\\
 & \leq M_{p,r}2^{n(1+\theta)}\left[2\sum_{l=0}^{r}\left(\frac{(1+\varepsilon)}{p}\sqrt{2n\log2}\right)^{lp}\right]2^{-n(1-\theta)(1+\varepsilon)^{2}},
\end{align*}
where the first inequality follows from (\ref{eq:cap_p_subadditivity}).
Now we only need to pick up suitable $\theta$ such that $\sum_{n}I_{n}<\infty$.
To this end, we want $1+\theta<(1-\theta)(1+\varepsilon)^{2}$. In
fact, any 
\[
\theta\in\left(0,\frac{(1+\varepsilon)^{2}-1}{(1+\varepsilon)^{2}+1}\right)
\]
will do. The proof is complete by applying the first Borel-Cantelli
lemma for $(p,r)$-capacity and letting $\varepsilon\to0$.
\end{proof}
The upper bound (\ref{eq:continuity_upp_bound}) implies the following
result:
\begin{cor}
$(B_{t})_{t\geq0}$ is $\alpha$-H\"{o}lder-continuous for $\alpha<H$
quasi-surely with respect to the Brownian motion capacity.
\end{cor}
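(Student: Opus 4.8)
The plan is to read off the Hölder property directly from the modulus-of-continuity upper bound (\ref{eq:continuity_upp_bound}), so that the whole statement reduces to an elementary comparison between the gauge $g(\delta)=\sqrt{2\delta^{2H}\log(1/\delta)}$ and the power $\delta^{\alpha}$. Write $\mathcal{N}$ for the set of paths on which the $\limsup$ in (\ref{eq:continuity_upp_bound}) exceeds $1$; by the theorem this set is slim, that is $c_{p,r}(\mathcal{N})=0$ for every $p\in(1,\infty)$ and $r\in\mathbb{N}$. I would then show that every path in the complement $\boldsymbol{W}\setminus\mathcal{N}$ is $\alpha$-Hölder on $[0,1]$ for each fixed $\alpha<H$, so that the exceptional set for Hölder continuity is contained in $\mathcal{N}$ and is therefore slim by monotonicity of the capacity; this is exactly what quasi-sure Hölder continuity means.

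First I would fix $\alpha\in(0,H)$ and record the key limit
\[
\frac{g(\delta)}{\delta^{\alpha}}=\sqrt{2}\,\delta^{H-\alpha}\sqrt{\log(1/\delta)}\longrightarrow 0\quad\text{as }\delta\downarrow0,
\]
which holds precisely because $H-\alpha>0$ forces the polynomial factor to beat the logarithm, so that $g(\delta)\le\delta^{\alpha}$ for all sufficiently small $\delta$. Next, for $\omega\in\boldsymbol{W}\setminus\mathcal{N}$ the $\limsup$ in (\ref{eq:continuity_upp_bound}) is at most $1$, so there is a path-dependent threshold $\delta_{0}=\delta_{0}(\omega)>0$ with
\[
\max_{\substack{\mathsmaller{0\le s<t\le1}\\ \mathsmaller{t-s\le\delta}}}\lvert B_{t}-B_{s}\rvert\le 2\,g(\delta),\qquad 0<\delta\le\delta_{0}.
\]
Combining the two displays, after shrinking $\delta_{0}$ if necessary, yields $\lvert B_{t}-B_{s}\rvert\le 2\lvert t-s\rvert^{\alpha}$ whenever $\lvert t-s\rvert\le\delta_{0}$, which is local Hölder continuity with a path-dependent constant.

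To upgrade this to a genuine finite Hölder seminorm on $[0,1]$ I would observe that the same bound forces $\delta\mapsto\max_{t-s\le\delta}\lvert B_{t}-B_{s}\rvert\to0$, so the continuous modification of the path is uniformly continuous and in particular bounded on $[0,1]$, say by $\lVert B\rVert_{\infty}<\infty$. For the remaining pairs with $\lvert t-s\rvert>\delta_{0}$ the crude estimate $\lvert B_{t}-B_{s}\rvert/\lvert t-s\rvert^{\alpha}\le 2\lVert B\rVert_{\infty}\delta_{0}^{-\alpha}$ then controls the ratio, so that $\sup_{s\ne t}\lvert B_{t}-B_{s}\rvert/\lvert t-s\rvert^{\alpha}<\infty$. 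Hence every $\omega\notin\mathcal{N}$ gives an $\alpha$-Hölder path, the exceptional set lies inside the slim set $\mathcal{N}$, and the claim on $[0,1]$ follows. To pass from $[0,1]$ to all of $[0,\infty)$ I would run the identical argument on each $[0,n]$ (reproving the modulus estimate verbatim, or transferring it via self-similarity) and use countable subadditivity of $c_{p,r}$ to conclude that the union over $n$ of the corresponding slim sets remains slim.

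I expect the only genuine subtlety to be this local-to-global passage: the modulus bound supplies Hölder control only below the path-dependent scale $\delta_{0}(\omega)$, so the argument must first extract uniform continuity, and hence boundedness, of the path from the same $\limsup$ estimate in order to dominate the large-increment ratios and obtain a bona fide finite Hölder constant. Keeping the exceptional set inside $\mathcal{N}$ throughout, and invoking countable subadditivity when enlarging the time horizon, is what guarantees the property holds quasi-surely rather than merely almost surely.
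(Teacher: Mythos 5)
Your proposal is correct and follows exactly the route the paper intends: the paper states the corollary as an immediate consequence of the upper bound (\ref{eq:continuity_upp_bound}), and your argument simply fleshes out that implication (the elementary comparison $g(\delta)\le\delta^{\alpha}$ for small $\delta$ when $\alpha<H$, the passage from the path-dependent small-scale bound to a finite Hölder seminorm via boundedness, and the countable union over compact time intervals), with the exceptional set contained in the single slim set where the $\limsup$ exceeds $1$. No gaps.
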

\begin{rem}
We regard $(B_{t})_{t\geq0}$ as a family of measurable functions
on $(\boldsymbol{W},\mathscr{F})$ with parameter $t\geq0$. What
we proved previously is that apart from a slim set, $t\to B_{t}(\omega)$
is continuous. Therefore, we can modify $\left(B_{t}\right)_{t\geq0}$
on the slim set $K$ by for example setting $B_{t}(\omega)=0$ for
all $\omega\in K$ such that the modified process is continuous, and
$K\in\mathscr{F}$ with $P(K)=0$ as $c_{p,r}$ is increasing in $p$
and $r$. From now on, we always refer $\left(B_{t}\right)_{t\geq0}$
to its continuous modification.
\end{rem}

\section{Non-differentiability}

In this part, we will generalise a very standard result based on the
argument in \citep{Dvoretzky1961} (see also \citep{Karatzas2012}
page 110), \citep{Fukushima1984} and \citep{Takeda1984}). 

\begin{thm}
Let $H\in(0,1)$. Then 
\[
\limsup_{h\downarrow0}\frac{\lvert B_{t+h}-B_{t}\rvert}{h}=\infty\quad\text{for all }t\in[0,1]\quad\text{q.s.}
\]
\end{thm}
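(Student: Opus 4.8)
The plan is to adapt the Dvoretzky--Erd\H{o}s--Kakutani covering argument to the $(p,r)$-capacity, reducing everything to the small-ball increment control provided by Lemma~\ref{capacity-probability}. First I would observe that the complement of the asserted event is contained in $\bigcup_{K,m\ge 1}A_{K,m}$, where
\[
A_{K,m}=\left\{\omega:\exists\,t\in[0,1]\text{ with }|B_{t+h}-B_{t}|\le Kh\text{ for all }0<h\le 2^{-m}\right\}.
\]
By countable subadditivity of $c_{p,r}$ and the definition of a slim set, it suffices to prove $c_{p,r}(A_{K,m})=0$ for every fixed $K,m\in\mathbb{N}$ and every $r\in\mathbb{N}$, $1<p<\infty$.

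Second, I would carry out the geometric covering. Fix an integer $\ell\ge 1$ (to be chosen large at the end, depending on $r,p,H$), partition $[0,2]$ into dyadic intervals of length $2^{-n}$, and for $n$ so large that $(\ell+1)2^{-n}\le 2^{-m}$ look at the $\ell$ dyadic intervals immediately to the right of the one containing a putative bad point $t\in[0,1]$. Using intervals to the right guarantees that only the right-increment bound defining $A_{K,m}$ is needed: if $\omega\in A_{K,m}$ with $t\in[(i-1)2^{-n},i2^{-n}]$, then for each $j\in\{i+1,\dots,i+\ell\}$ both endpoints exceed $t$ by at most $(\ell+1)2^{-n}\le 2^{-m}$, so the triangle inequality gives $|B_{j2^{-n}}-B_{(j-1)2^{-n}}|\le 2(\ell+1)K2^{-n}$. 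Writing $X_j=2^{nH}(B_{j2^{-n}}-B_{(j-1)2^{-n}})\sim N(0,1)$ and $\delta_n=2(\ell+1)K2^{-n(1-H)}$, this yields $A_{K,m}\subseteq\liminf_n C_n\subseteq\limsup_n C_n$, where
\[
C_n=\bigcup_{i}\bigcap_{j=i+1}^{i+\ell}\left\{|X_j|\le\delta_n\right\},
\]
so by the first Borel--Cantelli lemma for capacity it is enough to show $\sum_n c_{p,r}(C_n)<\infty$.

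Third comes the capacity estimate. For each $i$ I would apply Lemma~\ref{capacity-probability} with $N=\ell$, $a_j=-\delta_n$, $b_j=\delta_n$, $c_j=\delta_n$, bounding $[c_{p,r}(\bigcap_j\{|X_j|\le\delta_n\})]^p$ by the factor $\sum_{l=0}^{r}\ell^{lp}C_H^{lp/2}(M_r/\delta_n)^{lp}$ times $P(\bigcap_j\{|X_j|<2\delta_n\})$; since $\delta_n\to 0$ the sum is dominated by its $l=r$ term, of order $2^{n(1-H)rp}$. The crucial input is the uniform small-ball bound
\[
P\Big(\bigcap_{j=i+1}^{i+\ell}\{|X_j|<2\delta_n\}\Big)\le C(H,\ell)\,\delta_n^{\ell},
\]
valid for \emph{all} $H\in(0,1)$. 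Indeed, by stationarity of the increments together with self-similarity, the covariance matrix of $(X_{i+1},\dots,X_{i+\ell})$ is the fixed $\ell\times\ell$ Toeplitz matrix of fractional Gaussian noise, with entries $\tfrac12[(m+1)^{2H}+(m-1)^{2H}-2m^{2H}]$, independent of $i$ and $n$; its determinant is a strictly positive constant, so the joint Gaussian density is bounded and the probability of the box $[-2\delta_n,2\delta_n]^{\ell}$ is $\lesssim\delta_n^{\ell}$. Combining gives $[c_{p,r}(\bigcap_j\{|X_j|\le\delta_n\})]^p\lesssim 2^{n(1-H)(rp-\ell)}$, and summing over the $O(2^n)$ admissible $i$ via the strengthened subadditivity (\ref{eq:cap_p_subadditivity}) yields
\[
\big[c_{p,r}(C_n)\big]^p\lesssim 2^{n[1+(1-H)(rp-\ell)]}.
\]

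Finally, I would choose $\ell>rp+\tfrac{1}{1-H}$, making the exponent strictly negative, whence $\sum_n c_{p,r}(C_n)<\infty$; Borel--Cantelli then gives $c_{p,r}(\limsup_n C_n)=0$, so $c_{p,r}(A_{K,m})=0$, and quantifying over $K,m$ and over $r,p$ shows the exceptional set is slim. The main obstacle is exactly the uniform small-ball estimate: unlike the Brownian case one cannot invoke independence of increments, and for $H>\tfrac12$ the increments are positively correlated, so Slepian-type domination by the independent model is unavailable; the resolution is that the relevant matrix is a fixed nondegenerate fractional Gaussian noise matrix, which makes the density bound work for every $H$. A secondary point, absent in the purely probabilistic proof where three intervals suffice, is that $\ell$ must be taken to grow with $rp$ in order to absorb the derivative factor $(M_r/\delta_n)^{rp}$ produced by the capacity norm.
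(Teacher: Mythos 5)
Your proposal is correct and follows essentially the same route as the paper: the Dvoretzky--Erd\H{o}s--Kakutani covering reduction, Lemma \ref{capacity-probability} applied to a block of consecutive normalised increments, and the uniform bound on their joint Gaussian density coming from the fixed nondegenerate fractional-Gaussian-noise covariance matrix. The one substantive difference is in your favour: by letting the mollifier width $c_j=\delta_n$ shrink with $n$ and requiring $\ell>rp+\tfrac{1}{1-H}$ you correctly absorb the resulting factor $(M_r/\delta_n)^{rp}$, whereas the paper's choice of $N$ as the smallest integer with $N(1-H)/p>1$ does not account for the blow-up of $(M_r/c)^{lp}$ that is forced once $c$ must be taken of order $n^{-(1-H)}$ for its claimed bound $O(n^{-N(1-H)})$ on the probability to hold.
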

\begin{proof}
Let
\[
A=\left\{ \limsup_{h\downarrow0}\frac{\left|B_{t+h}-B_{t}\right|}{h}<\infty\ \text{for some }t\in[0,1]\right\} .
\]
The goal is to show that $A$ is a slim set. If $\omega\in A$, then
there exists a $t\in[0,1]$, positive integers $M$ and $k$, such
that $\left|B_{t+h}(\omega)-B_{t}(\omega)\right|\leq Mh$ for all
$0\leq h\leq\frac{1}{k}$ . Therefore, we may consider
\[
A_{k,M}^{t}=\left\{ \sup_{\mathsmaller{h\in[0,\frac{1}{k}]}}\frac{\left|B_{t+h}-B_{t}\right|}{h}\leq M\right\} 
\]
where $M$ and $k$ are positive integers. Then
\[
A=\bigcup_{t\in[0,1]}\bigcup_{M=1}^{\infty}\bigcup_{k=1}^{\infty}A_{k,M}^{t}.
\]
By the sub-additivity property of $(p,r)$-capacity, it remains to
show that 
\[
c_{p,r}\Big(\bigcup_{t\in[0,1]}A_{k,M}^{t}\Big)=0
\]
for all $r\in\mathbb{N}$ and $1<p<\infty$.

Fix $r$, $p$, $k$ and $M$. For $H\in(0,1)$, take $N$ to be the
smallest integer such that $\frac{N(1-H)}{p}>1$, and divide $[0,1]$
into $n$ subintervals with $n\geq(N+1)k$. Then for all $t\in[\frac{i-1}{n},\frac{i}{n}]$,
$1\leq i\leq n$, 
\[
\frac{i+N}{n}-t\leq\frac{1}{k},
\]
which indicates that for $1\leq j\leq N$,
\begin{equation}
\frac{i+j-1}{n}-t\leq\frac{i+j}{n}-t\leq\frac{i+N}{n}-t\leq\frac{1}{k}.\label{eq:non_diff_eq1}
\end{equation}
Now if $\omega\in A_{k,M}^{t}$ with $t\in[\frac{i-1}{n},\frac{i}{n}]$,
then for each $1\leq j\leq N$, by (\ref{eq:non_diff_eq1}),
\[
\begin{aligned}\left|B_{\frac{i+j}{n}}(\omega)-B_{\frac{i+j-1}{n}}(\omega)\right| & \leq\left|B_{t+(\frac{i+j}{n}-t)}(\omega)-B_{t}(\omega)\right|+\left|B_{t}(\omega)-B_{t+(\frac{i+j-1}{n}-t)}(\omega)\right|\\
 & \leq\left[\left(\frac{i+j}{n}-t\right)+\left(\frac{i+j-1}{n}-t\right)\right]M\\
 & \leq\frac{(2j+1)M}{n}.
\end{aligned}
\]
 Therefore, if we define 
\[
C_{i,n}=\bigcap_{j=1}^{N}\left\{ n^{H}\left|B_{\frac{i+j}{n}}-B_{\frac{i+j-1}{n}}\right|\leq\frac{(2j+1)M}{n^{1-H}}\right\} ,\quad1\leq i\leq n
\]
for each $n\geq(N+1)k$, then
\[
\bigcup_{t\in[0,1]}A_{k,M}^{t}\subset\bigcup_{i=1}^{n}C_{i,n}.
\]
Therefore, it suffices to prove that $\sum_{i=1}^{n}c_{p,r}(C_{i,n})\to0$
as $n\to\infty$. 

To this end, we apply lemma \ref{capacity-probability} to bound $c_{p,r}(C_{i,n})$
from above. For each fixed $i$, set $X_{j}=n^{H}(B_{\frac{i+j}{n}}-B_{\frac{i+j-1}{n}})$,
and $\alpha_{j}=\frac{(2j+1)M}{n^{1-H}}$, $1\leq j\leq N$. By lemma
\ref{capacity-probability} with $L=\frac{1}{n}$, it follows that
\begin{equation}
\begin{aligned}\sum_{i=1}^{n}c_{p,r}(C_{i,n})\leq & \left[\sum_{l=0}^{r}\left(N^{lp}C_{H}^{lp/2}\left(\frac{M_{r}}{c}\right)^{lp}\right)\right]^{1/p}\\
 & \cdot\begin{aligned}\sum_{i=1}^{n}\left[P\left(\bigcap_{j=1}^{N}\{-\alpha_{j}-c\leq X_{j}\leq\alpha_{j}+c\}\right)\right]^{1/p},\end{aligned}
\end{aligned}
\label{eq:non_diff_eq2}
\end{equation}
where $c>0$ is a constant, $M_{r}$ and $C_{H}$ are as in lemma
\ref{capacity-probability}. Note that $(X_{1},\cdots,X_{N})$ is
a centred Gaussian random variable with covariance matrix $\Sigma$,
determined by
\[
\mathbb{E}[X_{j}X_{k}]=\frac{1}{2}\left[(k-j+1)^{2H}+(k-j-1)^{2H}\right]-(k-j)^{2H},
\]
which depends only on $j$ and $k$. $\Sigma$ is an $N\times N$
positive definite matrix independent of $n$. Therefore, the right-hand
side of (\ref{eq:non_diff_eq2}) may be computed explicitly as
\[
\begin{aligned}P\left(\bigcap_{j=1}^{N}\{|X_{j}|\leq\alpha_{j}+c\}\right) & =2^{N}\int_{0}^{\alpha_{N}+c}\cdots\int_{0}^{\alpha_{1}+c}\frac{1}{\sqrt{2\pi|\Sigma|}}\exp\left(-\frac{1}{2}\bold{x}^{T}\Sigma^{-1}\bold{x}\right)dx_{1}\cdots dx_{N}\\
 & \leq2^{N}\frac{1}{\sqrt{2\pi|\Sigma|}}\prod_{j=1}^{N}(\alpha_{j}+c)\\
 & =O(n^{-N(1-H)}),
\end{aligned}
\]
hence it follows that
\[
\sum_{i=1}^{n}\left[P\left(\bigcap_{j=1}^{N}\{-\alpha_{j}-c\leq X_{j}\leq\alpha_{j}+c\}\right)\right]^{1/p}\leq O(n\cdot n^{-N(1-H)/p})\to0
\]
as $n\to\infty$, which completes the proof.
\end{proof}

\section{Law of Iterated Logarithm}

In this section, we establish the result on law of iterated logarithm
for fBM with Hurst parameter $H\in\left(0,\frac{1}{2}\right]$ with
respect to $(p,r)$-capacity on classical Wiener space, using the
argument from \citep{Fukushima1984} together with the technical lemmas
we established in Section 3.

\begin{thm}
Let $H\in\left(0,\frac{1}{2}\right]$. Then it holds that 
\[
c_{2,1}\left(\limsup_{t\downarrow0}\frac{B_{t}}{\sqrt{2t^{2H}\log\log(1/t)}}>1\right)=0.
\]
\end{thm}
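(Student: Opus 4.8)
Write $\psi(t)=\sqrt{2t^{2H}\log\log(1/t)}$ and fix $\theta\in(0,1)$. The plan is to discretise along the geometric grid $t_n=\theta^n\downarrow 0$, estimate $B_{t_n}$ and the oscillation of $B$ over each block $[t_{n+1},t_n]$ separately, and glue the two by interpolation. Since $c_{2,1}$ is subadditive and $\{\limsup_{t\downarrow0}B_t/\psi(t)>1\}=\bigcup_{k\ge1}\{\limsup_{t\downarrow0}B_t/\psi(t)>1+1/k\}$, it suffices to show that for each $k$ one can pick $\varepsilon,\delta>0$ and $\theta$ close to $1$ with $(1+\varepsilon+\delta)\theta^{-H}\le 1+1/k$ such that, off a set of $c_{2,1}$-capacity zero, $\limsup_{t\downarrow0}B_t/\psi(t)\le(1+\varepsilon+\delta)\theta^{-H}$.

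The crux is to retain the sharp LIL constant, and for that I would run the Borel--Cantelli argument at the level of $[c_{2,1}(\cdot)]^2$ rather than $c_{2,1}(\cdot)$. The one-sided bound obtained inside the proof of Lemma \ref{increment inequality}, before the final extraction of the $p$-th root, gives with $s=0$, $p=2$, $r=1$
\[
[c_{2,1}(B_{t_n}>\eta)]^2\le\Big[1+\big(\tfrac{\eta}{2t_n^{H}}\big)^2\Big]\exp\Big(-\tfrac{\eta^2}{2t_n^{2H}}\Big),
\]
which carries the sharp Gaussian exponent $-\eta^2/(2t_n^{2H})$; the looser factor-of-$4$ exponent in the displayed statement of Lemma \ref{increment inequality} is merely an artefact of the $p$-th root and would only deliver $\limsup\le\sqrt2$. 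Choosing $\eta=(1+\varepsilon)\psi(t_n)$ turns the exponent into $-(1+\varepsilon)^2\log\log(1/t_n)$, so that with $A_n=\{B_{t_n}>(1+\varepsilon)\psi(t_n)\}$ one has $[c_{2,1}(A_n)]^2=O\big((\log n)\,n^{-(1+\varepsilon)^2}\big)$, summable for every $\varepsilon>0$. The strong subadditivity (\ref{eq:cap_p_subadditivity}) then converts $\sum_n[c_{2,1}(A_n)]^2<\infty$ into $c_{2,1}(\limsup_nA_n)=0$, i.e. the sharp, $p$-th-power form of the capacitary Borel--Cantelli lemma.

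For the oscillation over a single block I would apply the squared form of (\ref{eq:sup_ineq_3}) in Lemma \ref{lem:supremum inequality} to $O_n=\{\sup_{t_{n+1}\le u\le t_n}|B_{t_n}-B_u|>\delta\psi(t_n)\}$. Here $t_n-t_{n+1}=\theta^n(1-\theta)$ and, because $H\le\tfrac12$ forces $\gamma_H=1$, the relevant denominator is $(\theta^n)^{2H}(1-\theta)^{2H}+\theta^n(1-\theta)$, in which the first term dominates when $H<\tfrac12$ and the two terms are comparable when $H=\tfrac12$. Taking $\eta=\delta\psi(t_n)$ the exponent is then bounded above by $-c_\theta\delta^2\log\log(1/t_n)$ with $c_\theta\to\infty$ as $\theta\uparrow1$, so once $\theta$ is close enough to $1$ (for the fixed $\delta$) we again obtain $\sum_n[c_{2,1}(O_n)]^2<\infty$ and hence $c_{2,1}(\limsup_nO_n)=0$ by (\ref{eq:cap_p_subadditivity}). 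Off the capacity-zero set $\limsup_nA_n\cup\limsup_nO_n$, every $t\in[t_{n+1},t_n]$ with $n$ large obeys $B_t\le B_{t_n}+|B_t-B_{t_n}|\le(1+\varepsilon+\delta)\psi(t_n)$, while $\psi(t_n)/\psi(t)\le\psi(t_n)/\psi(t_{n+1})\to\theta^{-H}$; this yields the interpolated bound $(1+\varepsilon+\delta)\theta^{-H}$ and completes the reduction of the first paragraph.

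The principal difficulty is exactly this matching of constants: the raw capacity estimates lose a factor of $2$ in the exponent, so the argument succeeds only because (i) Borel--Cantelli is run through $[c_{2,1}]^2$ via (\ref{eq:cap_p_subadditivity}), preserving the sharp exponent in the main term, and (ii) the suboptimality of the supremum inequality is confined to the oscillation term, where it is harmless since the block length $\theta^n(1-\theta)$ is small relative to $t_n$ once $\theta\uparrow1$. It is in step (ii) that $H\le\tfrac12$ is essential: for $H>\tfrac12$ the term $(t-s)$ dominates $(t-s)^{2H}$ on the short block $[t_{n+1},t_n]$, the coefficient $c_\theta\delta^2$ of $\log\log(1/t_n)$ degenerates to $0$ as $n\to\infty$, and the oscillation can no longer be summably controlled, in accordance with the Remark following Lemma \ref{lem:supremum inequality}.
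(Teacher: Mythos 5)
Your proof is correct, but your decomposition differs from the paper's. The paper applies the squared form of Lemma \ref{lem:supremum inequality} once per scale, directly to $\sup_{0\leq u\leq\theta^{n}}B_{u}$ over the \emph{entire} interval $[0,\theta^{n}]$; the sharp constant survives there because for $H<\frac{1}{2}$ the Brownian correction $\theta^{n}$ in the denominator $\gamma_{H}\theta^{2nH}+\theta^{n}$ is eventually negligible against $\theta^{2nH}$, so the factor $\theta^{2nH}/(\theta^{2nH}+\theta^{n})$ tends to $1$. You instead split each scale into the grid value $B_{t_{n}}$, controlled by the squared one-sided increment bound from Lemma \ref{increment inequality} (a pure Gaussian tail, hence sharp), plus the oscillation over the short block $[t_{n+1},t_{n}]$, where the suboptimality of the supremum inequality is absorbed by taking $\theta$ close to $1$. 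What your route buys: it treats $H=\frac{1}{2}$ on the same footing as $H<\frac{1}{2}$, whereas the paper's direct estimate degenerates there (the factor becomes $\frac{1}{2}$, forcing $(1+\delta)^{2}>2$) and the paper must defer $H=\frac{1}{2}$ to the classical Fukushima--Takeda result for Brownian motion; it also isolates exactly where $H\leq\frac{1}{2}$ enters. The cost is an extra family of events and one more parameter to tune. Both arguments hinge on the same two points you flag explicitly and the paper leaves implicit: Borel--Cantelli must be run on $\left[c_{2,1}(\cdot)\right]^{2}$ via (\ref{eq:cap_p_subadditivity}) rather than on $c_{2,1}(\cdot)$ itself (otherwise the exponent is halved and the constant $1$ is lost), and the capacity estimates must be used in their squared form rather than as displayed.
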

\begin{proof}
When $H=\frac{1}{2}$, the problem will be reduced to Brownian motion
case, which will be the same as in \citep{Fukushima1984} and \citep{Takeda1984}.
The rest of our proof will be similar to the argument in \citep{Fukushima1984}.
Let $h(t)=\sqrt{2t^{2H}\log\log(1/t)}$. Fix $\theta,\delta\in(0,1)$,
and set $\eta=(1+\delta)h(\theta^{n})$, $s=0$, $t=\theta^{n}$ in
Lemma \ref{lem:supremum inequality}, then it follows that
\begin{equation}
\begin{aligned} & \hphantom{=\ \ }\left[c_{2,1}\left(\sup_{0\leq u\leq\theta^{n}}B_{u}>(1+\delta)h(\theta^{n})\right)\right]^{2}\\
 & \leq\left[\left(\frac{\theta^{2nH}}{\theta^{2nH}+\theta^{n}}\right)^{2}(1+\delta^{2})\log\log(\theta^{-n})+2\right]\exp\left(-\frac{\theta^{2nH}}{\theta^{2nH}+\theta^{n}}(1+\delta)^{2}\log\log(\theta^{-n})\right)\vphantom{\left(\sup_{0\leq u\leq\theta^{n}}(\theta^{n})\right)}\\
 & \leq\left[(1+\delta^{2})\log\log(\theta^{-n})+2\right]\left(n\log(\theta^{-1})\right)^{-\frac{\theta^{2nH}}{\theta^{2nH}+\theta^{n}}(1+\delta)^{2}}\\
 & =C_{1}(\log n+C_{2})n^{-\frac{\theta^{2nH}}{\theta^{2nH}+\theta^{n}}(1+\delta)^{2}}.\vphantom{\left(\sup_{0\leq u\leq\theta^{n}}(\theta^{n})\right)}
\end{aligned}
\label{eq:LIL_eq1}
\end{equation}
For each $\theta$ and $\delta$, as $H<\frac{1}{2}$ and $\theta<1$,
there exists some $N_{0}$ such that for all $n\geq N_{0}$, 
\[
\frac{\theta^{2nH}}{\theta^{2nH}+\theta^{n}}(1+\delta)^{2}>1,
\]
so the right-hand side of (\ref{eq:LIL_eq1}) is a term of a convergent
series, and thus by the first Borel-Cantelli lemma for capacity, 
\[
{\normalcolor \sup_{0\leq u\leq\theta^{n}}B_{u}\leq(1+\delta)h(\theta^{n})\quad\text{eventually }}
\]
under $(2,1)$-capacity. The rest of proof remains the same as in
probability case.
\end{proof}
\begin{thm}
Let $(B_{t})_{t\geq0}$ be a one-dimensional fBM on $(\boldsymbol{W},\mathcal{H},P)$
with Hurst parameter $H\in\left(0,\frac{1}{2}\right]$. Then it holds
that 
\[
c_{2,1}\left(\limsup_{t\downarrow0}\frac{B_{t}}{\sqrt{2t^{2H}\log\log(1/t)}}<1\right)=0.
\]
\end{thm}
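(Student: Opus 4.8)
The plan is to prove the complementary inequality, namely that off a $c_{2,1}$-null set one has $\limsup_{t\downarrow0}B_t/h(t)\ge 1$, where $h(t)=\sqrt{2t^{2H}\log\log(1/t)}$. Since $\{\limsup<1\}=\bigcup_{m}\{\limsup\le 1-\tfrac1m\}$ and $c_{2,1}$ is countably subadditive, it suffices to produce, for parameters $\theta,\epsilon,\delta\in(0,1)$ ranging over a countable set, a $c_{2,1}$-null exceptional set off which $\limsup_{n\to\infty}B_{\theta^{n}}/h(\theta^{n})\ge(1-\epsilon)(1-\theta)^{H}-(1+\delta)\theta^{H}$, and then to let $\theta,\delta\downarrow0$ and $\epsilon\downarrow0$ so that the right-hand side tends to $1$; monotonicity of $c_{2,1}$ together with $\limsup_{t\downarrow0}\ge\limsup_{n}$ along the subsequence $\{\theta^{n}\}$ finishes the job. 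As is standard for the lower half of a law of iterated logarithm, I would work with the increments $Y_{n}=B_{\theta^{n}}-B_{\theta^{n+1}}$ over the disjoint intervals $[\theta^{n+1},\theta^{n}]$ and their standardisations $X_{n}=Y_{n}/\sigma_{n}$, $\sigma_{n}=(1-\theta)^{H}\theta^{nH}$, so that $X_{n}\sim N(0,1)$.

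First I would show that the ``all increments small'' event
\[
G_{M}=\bigcap_{n\ge M}\Bigl\{Y_{n}\le(1-\epsilon)\,\tilde h(\theta^{n})\Bigr\},\qquad \tilde h(\theta^{n})=\sqrt{2(\theta^{n}-\theta^{n+1})^{2H}\log\log\tfrac{1}{\theta^{n}-\theta^{n+1}}},
\]
has $c_{2,1}(G_{M})=0$ for every $M$. The crucial point is that, since $H\le\tfrac12$, increments over disjoint intervals are negatively correlated, so by \eqref{eq:std_increment_cov}--\eqref{eq:std_increment_Hprod} and Cauchy--Schwarz $\langle DX_{j},DX_{k}\rangle_{\mathcal H}=\mathbb E[X_{j}X_{k}]\in[-1,0]$ for $j\ne k$; in particular $|\langle DX_{j},DX_{k}\rangle_{\mathcal H}|\le1$. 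Re-running the proof of Lemma \ref{capacity-probability} (the equal-spacing hypothesis there was used only to control these inner products, which are now bounded by $C_{H}=1$) with one-sided cut-offs $f_{n}$ equal to $1$ on $(-\infty,b_{n})$ and vanishing past $b_{n}+c$ gives, for the finite sub-intersection $G_{M}^{(K)}=\bigcap_{n=M}^{M+K}\{X_{n}\le b_{n}\}$,
\[
\bigl[c_{2,1}(G_{M}^{(K)})\bigr]^{2}\le\Bigl[1+(K+1)^{2}\bigl(\tfrac{M_{1}}{c}\bigr)^{2}\Bigr]\,P\Bigl(\bigcap_{n=M}^{M+K}\{X_{n}\le b_{n}+c\}\Bigr),
\]
where $b_{n}=(1-\epsilon)\sqrt{2\log\log(1/(\theta^{n}(1-\theta)))}$.

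Next I would bound the probability by Slepian's lemma, exactly as in the upper-bound arguments of the paper: since $\mathbb E[X_{j}X_{k}]\le0=\mathbb E[Z_{j}Z_{k}]$ for an independent standard Gaussian sequence $(Z_{j})$, one gets $P(\bigcap\{X_{n}\le b_{n}+c\})\le\prod_{n=M}^{M+K}P(Z_{n}\le b_{n}+c)\le\exp(-\sum_{n=M}^{M+K}\tilde q_{n})$ with $\tilde q_{n}=P(Z_{n}>b_{n}+c)$. Because $\log\log(1/(\theta^{n}(1-\theta)))\sim\log n$ and $(1-\epsilon)^{2}<1$, the Gaussian tail yields $\tilde q_{n}\ge c''\,n^{-a'}$ for any fixed $a'\in((1-\epsilon)^{2},1)$ and large $n$ (the additive $c$ is of lower order), so the partial sums $\sum_{n=M}^{M+K}\tilde q_{n}$ diverge at a polynomial rate and $P(\bigcap\{X_{n}\le b_{n}+c\})\le\exp(-c'(M+K)^{\beta})$ for some $\beta\in(0,1)$. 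This stretched-exponential decay in $K$ overwhelms the polynomial factor $1+(K+1)^{2}(M_{1}/c)^{2}$, so letting $K\to\infty$ and using monotonicity of $c_{2,1}$ gives $c_{2,1}(G_{M})\le\lim_{K}c_{2,1}(G_{M}^{(K)})=0$.

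Finally I would assemble the pieces. By subadditivity $c_{2,1}(\bigcup_{M}G_{M})=0$, so off this null set $Y_{n}>(1-\epsilon)\tilde h(\theta^{n})$ for infinitely many $n$. The upper bound established in the previous theorem, applied to both $B$ and $-B$ under $c_{2,1}$, shows that off a further $c_{2,1}$-null set $B_{\theta^{n+1}}\ge-(1+\delta)h(\theta^{n+1})$ eventually. Using $\tilde h(\theta^{n})/h(\theta^{n})\to(1-\theta)^{H}$ and $h(\theta^{n+1})/h(\theta^{n})\to\theta^{H}$, the decomposition $B_{\theta^{n}}=Y_{n}+B_{\theta^{n+1}}$ gives, infinitely often,
\[
\frac{B_{\theta^{n}}}{h(\theta^{n})}\ge(1-\epsilon)(1-\theta)^{H}-(1+\delta)\theta^{H}+o(1),
\]
which is the claimed subsequential lower bound. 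The main obstacle is the one being circumvented throughout: there is no second Borel--Cantelli lemma for capacities, so independence cannot be exploited directly; the whole argument rests on quantifying the capacity of the \emph{finite} intersections via Lemma \ref{capacity-probability} and checking that the probabilistic (stretched-exponential) decay beats the polynomially growing capacity constant as the number of increments tends to infinity. A secondary technical point is the extension of Lemma \ref{capacity-probability} to the geometrically spaced increments, which is precisely where the hypothesis $H\le\tfrac12$ (negative correlation, $C_{H}=1$) is used.
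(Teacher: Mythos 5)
Your proposal is correct and follows essentially the same route as the paper: decompose into the increments $B_{\theta^{n}}-B_{\theta^{n+1}}$ over disjoint geometric intervals, bound the capacity of finite intersections of ``small increment'' events via Lemma \ref{capacity-probability} (valid here because $H\le\tfrac12$ forces $\langle DX_{j},DX_{k}\rangle_{\mathcal H}=\mathbb E[X_{j}X_{k}]\in[-1,0]$), compare with independent Gaussians by Slepian's lemma, and check that the stretched-exponential probability decay beats the polynomially growing capacity constant, before assembling with the upper-bound theorem applied to $-B$. The only differences are presentational: you use one-sided cut-offs where the paper takes a limit $a_{i}\downarrow-\infty$ via lower continuity of the capacity, and you write out the final assembly step (including the correct ``infinitely often'' reading of $c_{2,1}(\liminf_{n}G_{n})=0$) that the paper leaves implicit.
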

\begin{proof}
When $H=\frac{1}{2}$, the problem is reduced to Brownian motion case,
so we only need to consider the case when $H\in\left(0,\frac{1}{2}\right)$.
Let $h(t)=\sqrt{2t^{2H}\log\log(1/t)}$, and let $\theta\in(0,1)$,
define 
\[
G_{n}=\left\{ B_{\theta^{n}}-B_{\theta^{n+1}}<(1-\theta^{H})h(\theta^{n})\right\} .
\]
Our next step is to prove that
\[
c_{2,1}\left(\liminf_{n\to\infty}G_{n}\right)=0,
\]
from which we may deduce that for sufficiently large $n$,
\[
B_{\theta^{n}}-B_{\theta^{n+1}}>(1-\theta^{H})h(\theta^{n})
\]
apart from on a $(2,1)$-capacity zero set.

Write 
\[
X_{n}=\frac{B_{\theta^{n}}-B_{\theta^{n+1}}}{(\theta^{n}-\theta^{n+1})^{H}}\sim N(0,1),
\]
then by definition,
\[
G_{n}=\left\{ X_{n}<\frac{1-\theta^{H}}{(1-\theta)^{H}}\sqrt{2\log\log(\theta^{-n})}\right\} .
\]
For any integers $l\leq N$, take a decreasing sequence of real numbers
$\{a_{i}\}_{i=1}^{\infty}$ such that $a_{i}\downarrow-\infty$ as
$i\to\infty$, due to the continuity of capacity (\ref{eq:cap_lower_continuous}),
we have that 
\begin{align*}
\left[c_{2,1}\left(\bigcap_{n=l}^{N}G_{n}\right)\right]^{2} & =\left[c_{2,1}\left(\bigcap_{n=l}^{N}\left\{ X_{n}<\frac{1-\theta^{H}}{(1-\theta)^{H}}\sqrt{2\log\log(\theta^{-n})}\right\} \right)\right]^{2}\\
 & =\left[c_{2,1}\left(\bigcup_{i=1}^{\infty}\bigcap_{n=l}^{N}\left\{ a_{i}<X_{n}<\frac{1-\theta^{H}}{(1-\theta)^{H}}\sqrt{2\log\log(\theta^{-n})}\right\} \right)\right]^{2}\\
 & =\lim_{i\to\infty}\left[c_{2,1}\left(\bigcap_{n=l}^{N}\left\{ a_{i}<X_{n}<\frac{1-\theta^{H}}{(1-\theta)^{H}}\sqrt{2\log\log(\theta^{-n})}\right\} \right)\right]^{2}.
\end{align*}
Then we may apply lemma \ref{capacity-probability} to control the
intersection capacity with probability as the following:
\begin{align}
\left[c_{2,1}\left(\bigcap_{n=l}^{N}G_{n}\right)\right]^{2} & \leq\lim_{i\to\infty}\left(1+(N-l)^{2}C_{H}\left(\frac{M_{r}}{c}\right)^{2}\right)\nonumber \\
 & \hphantom{=}\cdot P\left(\bigcap_{n=l}^{N}\left\{ a_{i}-c_{n}<X_{n}<\frac{1-\theta^{H}}{(1-\theta)^{H}}\sqrt{2\log\log(\theta^{-n})}+c_{n}\right\} \right)\nonumber \\
 & \leq\left(1+(N-l)^{2}C_{H}\left(\frac{M_{r}}{c}\right)^{2}\right)\nonumber \\
 & \hphantom{=}\cdot P\left(\bigcap_{n=l}^{N}\left\{ X_{n}<\frac{1-\theta^{H}}{(1-\theta)^{H}}\sqrt{2\log\log(\theta^{-n})}+c_{n}\right\} \right).\label{eq:LIL_cap_prob}
\end{align}
When $H\in\left(0,\frac{1}{2}\right)$, the increments of fBM over
different time intervals are negatively correlated, i.e. $\mathbb{E}[X_{n}X_{m}]\leq0$.
For all $l\leq n,m\leq N$, we may take a sequence of independent
standard Gaussian random variables $\{Y_{n}\}$, and apply Slepian's
lemma to the intersection probability in the last line in (\ref{eq:LIL_cap_prob})
to obtain that 
\begin{align*}
 & \hphantom{=\ \ }P\left(\bigcap_{n=l}^{N}\left\{ X_{n}<\frac{1-\theta^{H}}{(1-\theta)^{H}}\sqrt{2\log\log(\theta^{-n})}+c_{n}\right\} \right)\\
 & \leq P\left(\bigcap_{n=l}^{N}\left\{ Y_{n}<\frac{1-\theta^{H}}{(1-\theta)^{H}}\sqrt{2\log\log(\theta^{-n})}+c_{n}\right\} \right)\\
 & =\prod_{n=l}^{N}P\left(Y_{n}<\frac{1-\theta^{H}}{(1-\theta)^{H}}\sqrt{2\log\log(\theta^{-n})}+c_{n}\right)\\
 & =\prod_{n=l}^{N}\left[1-P\left(Y_{n}\geq\frac{1-\theta^{H}}{(1-\theta)^{H}}\sqrt{2\log\log(\theta^{-n})}+c_{n}\right)\right]\\
 & \leq\exp\left[-\sum_{n=l}^{N}P\left(Y_{n}\geq\frac{1-\theta^{H}}{(1-\theta)^{H}}\sqrt{2\log\log(\theta^{-n})}+c_{n}\right)\right],
\end{align*}
where the last inequality follows from the fact that $1-x\leq e^{-x}$.
We proceed by picking up suitable $c_{n}$'s such that the left-hand
of (\ref{eq:LIL_cap_prob}) vanishes as $N$ goes to infinity. Notice
that for each $n\in[l,N]$, it holds that
\begin{align}
 & \hphantom{=\ \ }P\left(Y_{n}\geq\alpha\sqrt{2\log\log(\theta^{-n})}\right)\nonumber \\
 & =\frac{1}{\sqrt{2\pi}}\int_{\alpha\sqrt{2\log\log(\theta^{-n})}}^{\infty}e^{-\frac{x^{2}}{2}}dx\nonumber \\
 & \ge\frac{1}{\sqrt{2\pi}}\frac{\alpha\sqrt{2\log\log(\theta^{-n})}}{1+2\alpha^{2}\log\log(\theta^{-n})}\exp\left(-\alpha^{2}\log\log(\theta^{-n})\right)\nonumber \\
 & \geq\frac{1}{\sqrt{2\pi}}\frac{1}{C_{1}\sqrt{2\alpha^{2}\log\log(\theta^{-n})}}\cdot\frac{1}{n^{\alpha^{2}}(\log(\theta^{-1}))^{\alpha^{2}}}\nonumber \\
 & \geq\frac{C_{2}}{n^{\alpha^{2}}\sqrt{\log n}},\label{eq:LIL_eq2}
\end{align}
where $C_{1}$ and $C_{2}$ are positive constants. Choose suitable
$C$ and small $\beta$ such that $\log x<Cx^{\beta}$ for large $x$,
and set $c_{n}$ to be small enough such that the quantity
\[
\alpha=\frac{c_{n}}{\sqrt{2\log\log(\theta^{-n})}}+\frac{1-\theta^{H}}{(1-\theta)^{H}}
\]
satisfies $\gamma=\alpha^{2}+\frac{\beta}{2}<1$. By taking $\alpha$
equal to the above value in (\ref{eq:LIL_eq2}), we conclude that
\[
\sum_{n=l}^{N}P\left(Y_{n}\geq\frac{1-\theta^{H}}{(1-\theta)^{H}}\sqrt{2\log\log(\theta^{-n})}+c_{n}\right)\geq\sum_{n=l}^{N}\frac{C_{3}}{n^{\gamma}}\geq C_{3}(N^{1-\gamma}-l^{1-\gamma}),
\]
where $C_{3}$ is a positive constant. Therefore,
\[
\left[c_{2,1}\left(\bigcap_{n=l}^{\infty}G_{n}\right)\right]^{2}\leq\left[c_{2,1}\left(\bigcap_{n=l}^{N}G_{n}\right)\right]^{2}\leq C'(N-l)^{2}C_{H}e^{-C_{3}(N^{1-\gamma}-l^{1-\gamma})},
\]
where $C'$ is some positive constant, and $\begin{aligned}C_{H} & =\max\left\{ 2^{2H-1}-1,1\right\} \leq1\end{aligned}
$ as in lemma \ref{capacity-probability}. Since the right-hand side
of above inequality vanishes as $N$ goes to infinity, we arrive at
\[
c_{2,1}\left(\liminf_{n\to\infty}G_{n}\right)=0.
\]
\end{proof}
We are unable to extend the result to the case where $H>\frac{1}{2}$,
and we do not believe a similar result is true for this case in fact.

\section{Self-intersection of sample paths}

Recall that $\boldsymbol{W}_{0}^{d}$ consists of all $\mathbb{R}^{d}$-valued
continuous paths, started at the origin, and $(\boldsymbol{W}_{0}^{d},\mathcal{H},P)$
is the corresponding classical Wiener space. In this section, a $d$-dimensional
fBM is defined to be the functional on $(\boldsymbol{W}_{0}^{d},\mathcal{H},P)$
given by the integral
\begin{equation}
B_{t}=\int_{0}^{t}K(t,s)d\omega(s),\label{eq:d-dim fBM defn}
\end{equation}
where $\omega\in\boldsymbol{W}_{0}^{d}$ is $d$-dimensional Brownian
motion. By definition, a $d$-dimensional fBM is $d$ copies of independent
one-dimensional fBM defined as in (\ref{eq:fBM_integral_repn}) due
to the definition of multi-dimensional Brownian motion. Like in the
one-dimensional case, we take a suitable modification of $B_{t}$
such that it is quasi-surely continuous with respect to classical
Wiener capacity. 

In this section, we will study the self-avoiding property for $d$-dimensional
fBM and establish a result with respect to $(2,1)$-capacity on $(\boldsymbol{W}_{0}^{d},\mathcal{H},P)$,
following the idea by Kakutani \citep{Kakutani1944} together with
several techniques in Fukushima \citep{Fukushima1984} and Takeda
\citep{Takeda1984} to tackle with capacities.
\begin{thm}
Let $B=(B_{t})_{t\geq0}$ be the $d$-dimensional fBM defined in (\ref{eq:d-dim fBM defn})
with Hurst parameter $H$. When $H\leq\frac{1}{2}$ and $d>\frac{2}{H}+2$,
$B$ has no double point under $(2,1)$-capacity on classical Wiener
space; when $H\geq\frac{1}{2}$ and $d>6$, $B$ has no double point
under $(2,1)$-capacity.
\end{thm}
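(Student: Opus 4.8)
The plan is to reduce the absence of double points to a statement about two disjoint time intervals, cover each dyadically, and then combine the maximal inequality of Lemma~\ref{lem:supremum inequality} with a $d$-dimensional small-ball capacity estimate carried out exactly as in Lemma~\ref{capacity-probability}. Throughout I work with the quasi-continuous modification fixed in Section~4.

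First I would note that any double point $B_s=B_t$ with $0\le s<t$ can be separated by two rationals $q_1<q_2$ with $s<q_1<q_2<t$, so that $s\in[0,q_1]$ and $t\in[q_2,T]$ for some integer $T$. Hence the set $D$ of paths with a double point satisfies
\[
D\subseteq\bigcup_{\substack{q_1<q_2\in\mathbb Q\\ T\in\mathbb N}}\big\{B([0,q_1])\cap B([q_2,T])\neq\emptyset\big\},
\]
and by countable sub-additivity of $c_{2,1}$ it suffices to prove that for a fixed triple $(q_1,q_2,T)$ with gap $g=q_2-q_1>0$ the event $\mathcal H=\{B([0,q_1])\cap B([q_2,T])\neq\emptyset\}$ has zero $(2,1)$-capacity. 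The whole point of this reduction is that any subinterval of $[0,q_1]$ and any subinterval of $[q_2,T]$ are separated in time by at least $g$, which keeps the variance of the relevant increment bounded below and removes the degeneracy that would otherwise plague nearby intervals.

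Next I would partition $[0,q_1]$ and $[q_2,T]$ into dyadic intervals $I_i,J_k$ of length $2^{-n}$ with left endpoints $t_i,t_k'$. If the two ranges meet, then for some pair $B_s=B_t$ with $s\in I_i$, $t\in J_k$, so $|B_{t_i}-B_{t_k'}|\le\mathrm{osc}_i+\mathrm{osc}_k$, where $\mathrm{osc}_i\le 2\sup_{u\in\bar I_i}|B_u-B_{t_i}|$. Splitting at a threshold $r_n$ gives, for every $n$,
\[
\mathcal H\subseteq\Big(\bigcup_{i,k}\{|B_{t_i}-B_{t_k'}|\le 2r_n\}\Big)\cup\Big(\bigcup_i\{\mathrm{osc}_i>r_n\}\Big)=:G_n\cup E_n .
\]
Lemma~\ref{lem:supremum inequality} bounds $[c_{2,1}(\mathrm{osc}_i>r_n)]^2$ by $\exp\!\big(-c\,r_n^2/[\gamma_H2^{-2nH}+2^{-n}]\big)$ up to a polynomial prefactor. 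Choosing $r_n=K2^{-nH}\sqrt n$ when $H\le\frac12$ (the $2^{-2nH}$ term dominates) and $r_n=K2^{-n/2}\sqrt n$ when $H\ge\frac12$ (the $2^{-n}$ term dominates) makes each tail $\lesssim e^{-cK^2 n}$, so the strong sub-additivity (\ref{eq:cap_p_subadditivity}) over the $\approx 2^n$ intervals yields $\sum_n[c_{2,1}(E_n)]^2<\infty$ for $K$ large.

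For $G_n$ I would exploit that the $d$-dimensional fBM is $d$ independent copies, so $X_{ik}:=(B_{t_i}-B_{t_k'})/|t_i-t_k'|^H$ has i.i.d.\ $N(0,1)$ coordinates and $\{|B_{t_i}-B_{t_k'}|\le 2r_n\}$ lies in the box $\{|X_{ik}^{(l)}|\le\rho_{ik}\ \forall l\}$ with $\rho_{ik}=2r_n/|t_i-t_k'|^H$. Running the Malliavin computation of Lemma~\ref{capacity-probability} on a product cut-off $\prod_l f_l(X_{ik}^{(l)})$—where the cross terms $\langle DX_{ik}^{(l)},DX_{ik}^{(l')}\rangle$ now vanish by independence and $\|DX_{ik}^{(l)}\|_{\mathcal H}=1$—and widening by $c=\rho_{ik}$ gives
\[
[c_{2,1}(|B_{t_i}-B_{t_k'}|\le 2r_n)]^2\lesssim(1+d\,M_1^2\rho_{ik}^{-2})\,\rho_{ik}^{\,d}\lesssim\rho_{ik}^{\,d-2}.
\]
The gain of $\rho^{-2}$ over the probabilistic rate $\rho^{d}$ is precisely the single Malliavin derivative in the $(2,1)$-norm, and is the origin of the extra $+2$ in the threshold. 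Since $|t_i-t_k'|\ge g$ and $\sum_{i,k}|t_i-t_k'|^{-H(d-2)}\approx2^{2n}\iint_{[0,q_1]\times[q_2,T]}(v-u)^{-H(d-2)}\,du\,dv$ is $2^{2n}$ times a finite constant, the strong sub-additivity gives
\[
[c_{2,1}(G_n)]^2\lesssim(\sqrt n)^{\,d-2}\,2^{\,n(2-\kappa_H(d-2))},\qquad\kappa_H=\min\{H,\tfrac12\},
\]
which is summable exactly when $\kappa_H(d-2)>2$, i.e.\ $d>2/H+2$ for $H\le\frac12$ and $d>6$ for $H\ge\frac12$. The first Borel--Cantelli lemma for capacity then forces $c_{2,1}(\limsup_n(G_n\cup E_n))=0$; since $\mathcal H\subseteq G_n\cup E_n$ for all $n$ we get $c_{2,1}(\mathcal H)=0$, and the reduction above finishes the proof.

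The main obstacle is securing the oscillation control at the correct scale, which is also what separates the two regimes: for $H>\frac12$ the linear term $(t-s)$ dominates $\gamma_H(t-s)^{2H}$ in Lemma~\ref{lem:supremum inequality}, so fBM increments over short intervals behave at the Brownian scale $2^{-n/2}$ rather than $2^{-nH}$, forcing $\kappa_H=\frac12$ and hence $d>6$. A second delicate point, and the reason the result is restricted to $(2,1)$-capacity, is that Lemma~\ref{lem:supremum inequality} is available only for $p=2,r=1$: extending to general $(p,r)$ would need a corresponding supremum estimate, which is unavailable since the law of $\sup$ of fBM is unknown. Finally one must verify that the implied constants in the small-ball bound are uniform in $(i,k)$ and $n$, so that summing over the $\approx2^{2n}$ pairs is legitimate; this is exactly where the lower bound $|t_i-t_k'|\ge g$ from the initial reduction is indispensable.
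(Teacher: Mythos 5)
Your proposal is correct and follows essentially the same route as the paper's proof: reduce to two time intervals separated by a positive gap, subdivide both, split the double-point event into a small-ball part handled by the capacity-versus-probability comparison of Lemma \ref{capacity-probability} (which supplies the $\rho^{-2}$ gain responsible for the ``$+2$'') and an oscillation part handled by Lemma \ref{lem:supremum inequality}, then balance the threshold against the mesh. The only differences are cosmetic: you use a dyadic mesh with the near-critical threshold $K2^{-n\kappa_H}\sqrt{n}$ and invoke Borel--Cantelli, whereas the paper uses a $k$-fold partition with threshold $k^{-\sigma}$ for a strictly sub-critical $\sigma$ and simply lets $k\to\infty$.
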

\begin{proof}
When $H=\frac{1}{2}$, the above result is proved in Fukushima \citep{Fukushima1984}
and Takeda \citep{Takeda1984}. It suffices to show that for any two
disjoint intervals $I=(s_{0},s_{1})$ and $J=(t_{0},t_{1})$ with
$s_{0}<s_{1}<t_{0}<t_{1}$,
\begin{equation}
c_{2,1}\left(B_{s}=B_{t},\text{ for some }s\in I\text{ and some }t\in J\right)=0.\label{eq:self_avd_equiv}
\end{equation}
By self-similarity property of fBM, we only need to establish the
above equality for $0\leq s_{0}<s_{1}<t_{0}<t_{1}\leq1$. Denote the
set in (\ref{eq:self_avd_equiv}) by $A$. Then for any $\eta>0$,
we may write
\[
A\subset \bigcap_{i=1}^{d}\left\{ \left|B_{s_{1}}^{i}-B_{t_{0}}^{i}\right|<2\eta\right\} \cup\bigcup_{i=1}^{d}\left\{ \sup_{s\in I}\left|B_{s_{1}}^{i}-B_{s}^{i}\right|>\eta\right\} \cup\bigcup_{i=1}^{d}\left\{ \sup_{t\in J}\left|B_{t}^{i}-B_{t_{0}}^{i}\right|>\eta\right\} ,
\]
where $B^{i}$ is the $i$-th component of $B$. It thus follows from
sub-additivity property of capacity that 
\[
\begin{aligned}c_{2,1}(A)\leq &\ c_{2,1}\left(\bigcap_{i=1}^{d}\left\{ \left|B_{s_{1}}^{i}-B_{t_{0}}^{i}\right|<2\eta\right\} \right)\\
 & +\sum_{i=1}^{d}c_{2,1}\left(\sup_{s\in I}\left|B_{s_{1}}^{i}-B_{s}^{i}\right|>\eta\right)\\
 & +\sum_{i=1}^{d}c_{2,1}\left(\sup_{t\in J}\left|B_{t}^{i}-B_{t_{0}}^{i}\right|>\eta\right).
\end{aligned}
\]
Applying lemma \ref{capacity-probability} with $c=c_{i}=\eta$, $i=1,2,\cdots,d$,
we obtain that
\[
\begin{aligned}c_{2,1}\left(\bigcap_{i=1}^{d}\left\{ -2\eta<B_{s_{1}}^{i}-B_{t_{0}}^{i}<2\eta\right\} \right)\leq & \left(1+d^{2}C_{H}\left(\frac{M}{\eta}\right)^{2}\right)\\
 & \cdot P\left(\bigcap_{i=1}^{d}\left\{ \left|B_{s_{1}}^{i}-B_{t_{0}}^{i}\right|<3\eta\right\} \right),
\end{aligned}
\]
where $C_{H}=\max\left\{ 2^{2H-1}-1,1\right\} \leq1$, and $M$ is
some positive constant. Therefore, 
\begin{align*}
 & c_{2,1}\left(\bigcap_{i=1}^{d}\left\{ \left|B_{s_{1}}^{i}-B_{t_{0}}^{i}\right|<2\eta\right\} \right)\\
\leq & \left(1+d^{2}\left(\frac{M}{\eta}\right)^{2}\right)\prod_{i=1}^{d}P\left(\left|B_{s_{1}}^{i}-B_{t_{0}}^{i}\right|<3\eta\vphantom{\left(\frac{M}{\eta}\right)^{2}}\right)\\
= & \left(1+d^{2}\left(\frac{M}{\eta}\right)^{2}\right)\left[\frac{1}{\sqrt{2\pi(t_{0}-s_{1})^{2H}}}\int_{-3\eta}^{3\eta}\exp\left(-\frac{x^{2}}{2(t_{0}-s_{1})^{2H}}\right)dx\right]^{d}\\
\leq & \left(1+d^{2}\left(\frac{M}{\eta}\right)^{2}\right)\left(\frac{6\eta}{\sqrt{2\pi\left(d(I,J)\right)^{2H}}}\right)^{d},
\end{align*}
where $d(I,J)=t_{0}-s_{1}$ denotes the distance between these two
intervals. Also, applying lemma \ref{lem:supremum inequality}, it
follows that 
\[
\begin{aligned}c_{2,1}\left(\sup_{s\in I}\left|B_{s_{1}}^{i}-B_{s}^{i}\right|>\eta\right) & \leq\sqrt{\frac{\eta^{2}(s_{1}-s_{0})^{2H}}{\left[\gamma_{H}(s_{1}-s_{0})^{2H}+(s_{1}-s_{0})\right]^{2}}+4}\\
 & \hphantom{=}\cdot\exp\left(-\frac{\eta^{2}}{4[\gamma_{H}(s_{1}-s_{0})^{2H}+(s_{1}-s_{0})]}\right)\\
 & =\sqrt{\frac{\eta^{2}|I|^{2H}}{\left[\gamma_{H}|I|^{2H}+|I|\right]^{2}}+4}\cdot\exp\left(-\frac{\eta^{2}}{4[\gamma_{H}|I|^{2H}+|I|]}\right),
\end{aligned}
\]
where $|I|=s_{1}-s_{0}$ denote the length of $I$. Accordingly,
\[
c_{2,1}\left(\sup_{t\in J}\left|B_{t}^{i}-B_{t_{0}}^{i}\right|>\eta\right)\leq\sqrt{\frac{\eta^{2}|J|^{2H}}{\left[\gamma_{H}|J|^{2H}+|J|\right]^{2}}+4}\exp\left(-\frac{\eta^{2}}{4[\gamma_{H}|J|^{2H}+|J|]}\right),
\]
with $|J|=t_{1}-t_{0}$, the length of interval $J$. 

Divide $I$ and $J$ into $k$ subintervals evenly, i.e. $I=\bigcup_{m=1}^{k}I_{m}$,
$J=\bigcup_{l=1}^{k}J_{l}$, $I_{m}$ and $J_{l}$ are disjoint for
all $1\leq m,l\leq k$ and $|I_{m}|=|I|/k$, $|J_{l}|=|J|/k$. By
sub-additivity and above,
\begin{align*}
c_{2,1}(A)\leq & \sum_{m=1}^{k}\sum_{l=1}^{k}c_{2,1}\Big(B_{s}=B_{t},\text{ for some }s\in I_{m}\text{ and some }t\in J_{l}\Big)\vphantom{\left(\frac{6\eta}{\sqrt{\left(a\right)^{2H}}}\right)}\\
\leq & \sum_{m=1}^{k}\sum_{l=1}^{k}\left[\left(1+d^{2}\left(\frac{M}{\eta}\right)^{2}\right)\left(\frac{6\eta}{\sqrt{2\pi\left(d(I_{m},J_{l})\right)^{2H}}}\right)^{d}\right.\\
 & +d\sqrt{\frac{\eta^{2}|I_{m}|^{2H}}{\left(\gamma_{H}|I_{m}|^{2H}+|I_{m}|\right)^{2}}+4}\exp\left(-\frac{\eta^{2}}{4\left(\gamma_{H}|I_{m}|^{2H}+|I_{m}|\right)}\right)\\
 & \left.+d\sqrt{\frac{\eta^{2}|J_{l}|^{2H}}{\left(\gamma_{H}|J_{l}|^{2H}+|J_{l}|\right)^{2}}+4}\exp\left(-\frac{\eta^{2}}{4\left(\gamma_{H}|J_{l}|^{2H}+|J_{l}|\right)}\right)\right]\\
\leq &\ k^{2}\left[\left(1+d^{2}\left(\frac{M}{\eta}\right)^{2}\right)\left(\frac{6\eta}{\sqrt{2\pi\left(d(I,J)\right)^{2H}}}\right)^{d}\right.\\
 & +d\sqrt{\frac{\eta^{2}k^{2H}}{\gamma_{H}^{2}|I|^{2H}}+4}\exp\left(-\frac{\eta^{2}}{4\left(\gamma_{H}|I|^{2H}k^{-2H}+|I|k^{-1}\right)}\right)\\
 & \left.+d\sqrt{\frac{\eta^{2}k^{2H}}{\gamma_{H}^{2}|J|^{2H}}+4}\exp\left(-\frac{\eta^{2}}{4\left(\gamma_{H}|J|^{2H}k^{-2H}+|J|k^{-1}\right)}\right)\right].
\end{align*}
Set $\eta=k^{-\sigma}$, then according to the previous estimate,
when $k$ is sufficiently large and $H<\frac{1}{2}$, it holds that
\begin{equation}
c_{2,1}(A)\leq C_{1}\left(k^{2-\sigma(d-2)}+k^{(H-\sigma)+2}e^{-Ck^{2(H-\sigma)}}\right),\label{eq:self_avd_est}
\end{equation}
where $C_{1}$ is some constant. Notice that when $\frac{2}{d-2}<\sigma<H$,
the expression on the right-hand side of (\ref{eq:self_avd_est})
vanishes as $k$ tends to infinity. This implies that if such a $\sigma$
exists, then $B$ has no double point under $(2,1)$-capacity, which
only requires $\frac{2}{d-2}<H$, i.e. $d>\frac{2}{H}+2$.

On the other hand, when $H>\frac{1}{2}$, by setting $\eta=k^{-\sigma}$,
we get that 
\[
c_{2,1}(A)\leq C_{2}\left(k^{2-\sigma(d-2)}+k^{(H-\sigma)+2}e^{-Ck^{2(1-\sigma)}}\right)
\]
when $k$ is sufficiently large, where $C_{2}$ is a constant. Therefore,
in order to guarantee that the right-hand side vanishes as $k$ tends
to infinity, we require $\frac{2}{d-2}<\sigma<\frac{1}{2}$, which
forces $d>6$.
\end{proof}
\begin{rem}
For $d$-dimensional Brownian motion, absence of double points under
$(2,1)$-capacity was proved by Fukushima in \citep{Fukushima1984}.
According to Lyons \citep{Lyons1986}, the critical dimension for
such property is $d=6$. Due to lack of tools such as potential theory,
the critical dimension of self-avoiding property for fBM remains,
we believe, an open question even in probability context.
\end{rem}

\appendix

\section{Appendix}

In this appendix, we provide a proof for lemma \ref{lem:Malliavin deriv}
in this section. The proof is a modification of the proof for Proposition
3.1 in Decreusefond and \"{U}st\"{u}nel \citep{Decreusefond1999}.
The following elementary estimate, which will be used in the proof,
taken from Theorem 3.2 in \citep{Decreusefond1999}: for any $H\in(0,1)$,
there exists a constant $c_{H}$ such that
\begin{equation}
K(t,r)\leq c_{H}r^{-\left|H-\frac{1}{2}\right|}(t-r)^{-\left(\frac{1}{2}-H\right)_{+}}\mathds{1}_{[0,t]}(r)\label{eq:K_estimate}
\end{equation}
 for any $t>r\geq0$, where $x_{+}=\max(x,0)$. 

\smallskip

\emph{Proof of Lemma} \ref{lem:Malliavin deriv}. For each fixed $t>0$,
denote $u_{t}(s)=K(t,s)\mathds{1}_{[0,t]}(s)$ for simplicity, and
set for $n\in\mathbb{N}$,
\[
u_{t}^{(n)}(s)=\sum_{i=0}^{2^{n}-1}\frac{2^{n}}{t}\left(\int_{i2^{-n}t}^{(i+1)2^{-n}t}u_{t}(r)dr\right)\mathds{1}_{(i2^{-n}t,(i+1)2^{-n}t]}(s).
\]
Then $u_{t}$ and $u_{t}^{(n)}$, $n\in\mathbb{N}$, belong to $L^{2}([0,\infty))$.
For convenience, let 
\[
F_{i}^{t,(n)}=\frac{2^{n}}{t}\left(\int_{i2^{-n}t}^{(i+1)2^{-n}t}u_{t}(r)dr\right),\quad0\leq i\leq2^{n}-1.
\]

We want to apply the dominated convergence theorem to show that for
each $t>0$, $u_{t}^{(n)}\to u_{t}$ in $L^{2}([0,\infty))$. Our
first step is to find a control function of $\{u_{t}^{(n)}\}$ in
$L^{2}([0,\infty))$. Notice that $u_{t}^{(n)}(s)$ vanishes outside
of $(0,t]$, and it is defined to be a step function inside $(0,t]$,
so we only need to check that on each ``step'', i.e. $s\in(i2^{-n}t,(i+1)2^{-n}t]$,
$0\leq i\leq2^{n-1}$, $u_{t}^{(n)}(s)$ is controlled. 

When $H>\frac{1}{2}$, for each $s\in(i2^{-n}t,(i+1)2^{-n}t]$, $0\leq i\leq2^{n-1}$,
by the estimate in (\ref{eq:K_estimate}),
\begin{align*}
\left\vert u_{t}^{(n)}(s)\right\vert  & =\frac{2^{n}}{t}\int_{i2^{-n}t}^{(i+1)2^{-n}t}K(t,r)dr\\
 & \leq\frac{2^{n}}{t}\int_{i2^{-n}t}^{(i+1)2^{-n}t}c_{H}r^{\frac{1}{2}-H}dr\\
 & =c_{H}'\left(\frac{t}{2^{n}}\right)^{\frac{1}{2}-H}\left[\left(\mathsmaller{\frac{i+1}{2^{n}}}\right)^{\frac{3}{2}-H}-\left(\mathsmaller{\frac{i}{2^{n}}}\right)^{\frac{3}{2}-H}\right]\\
 & \leq c_{H}'t^{\frac{1}{2}-H}\left[(i+1)\left(\mathsmaller{\frac{i+1}{2^{n}}}\right)^{\frac{1}{2}-H}-i\left(\mathsmaller{\frac{i+1}{2^{n}}}\right)^{\frac{1}{2}-H}\right]\\
 & =c_{H}'t^{\frac{1}{2}-H}\left(\mathsmaller{\frac{i+1}{2^{n}}}\right)^{\frac{1}{2}-H}\\
 & \leq c_{H}'t^{\frac{1}{2}-H}s^{\frac{1}{2}-H},\vphantom{\frac{a}{b}^{a}}
\end{align*}
where $c_{H}'=c_{H}\left(\frac{3}{2}-H\right)^{-1}$. This implies
that when $H>\frac{1}{2}$, we may take the control function to be
$c_{H}'t^{\frac{1}{2}-H}s^{\frac{1}{2}-H}\mathds{1}_{(0,t]}(s)$. 

When $H<\frac{1}{2}$, similar to above, we have that by (\ref{eq:K_estimate}),
\begin{align*}
\left\vert u_{t}^{(n)}(s)\right\vert  & =\frac{2^{n}}{t}\int_{i2^{-n}t}^{(i+1)2^{-n}t}K(t,r)dr\vphantom{\left(\frac{i}{2^{n}}\right)^{H}}\\
 & \leq c_{H}\frac{2^{n}}{t}\int_{s}^{(i+1)2^{-n}t}r^{H-\frac{1}{2}}(t-r)^{H-\frac{1}{2}}dr+c_{H}\frac{2^{n}}{t}\int_{i2^{-n}t}^{s}r^{H-\frac{1}{2}}(t-r)^{H-\frac{1}{2}}dr\vphantom{\left(\frac{i}{2^{n}}\right)^{H}}\\
 & \leq c_{H}\frac{2^{n}}{t}s^{H-\frac{1}{2}}\int_{i2^{-n}t}^{(i+1)2^{-n}t}(t-r)^{H-\frac{1}{2}}dr+c_{H}\frac{2^{n}}{t}(t-s)^{H-\frac{1}{2}}\int_{i2^{-n}t}^{(i+1)2^{-n}t}r^{H-\frac{1}{2}}dr\vphantom{\left(\frac{i}{2^{n}}t\right)^{H}}\\
 & =c_{H}'\frac{2^{n}}{t}s^{H-\frac{1}{2}}\left[\left(\mathsmaller{t-\frac{i}{2^{n}}t}\right)^{H+\frac{1}{2}}-\left(\mathsmaller{t-\frac{i+1}{2^{n}}t}\right)^{H+\frac{1}{2}}\right]\vphantom{\left(\frac{i}{2^{n}}\right)^{H}}\\
 & \hphantom{=}+c_{H}'\frac{2^{n}}{t}(t-s)^{H-\frac{1}{2}}\left[\left(\mathsmaller{\frac{i+1}{2^{n}}t}\right)^{H+\frac{1}{2}}-\left(\mathsmaller{\frac{i}{2^{n}}t}\right)^{H+\frac{1}{2}}\right]\vphantom{\left(\frac{i}{2^{n}}\right)^{H}}\\
 & \leq c_{H}'\frac{2^{n}}{t}s^{H-\frac{1}{2}}\left[\left(\mathsmaller{t-\frac{i}{2^{n}}t}\right)\left(\mathsmaller{t-\frac{i}{2^{n}}t}\right)^{H-\frac{1}{2}}-\left(\mathsmaller{t-\frac{i+1}{2^{n}}t}\right)\left(\mathsmaller{t-\frac{i}{2^{n}}t}\right)^{H-\frac{1}{2}}\right]\vphantom{\left(\frac{i}{2^{n}}\right)^{H}}\\
 & \hphantom{=}+c_{H}'\frac{2^{n}}{t}(t-s)^{H-\frac{1}{2}}\left[\left(\mathsmaller{\frac{i+1}{2^{n}}t}\right)\left(\mathsmaller{\frac{i+1}{2^{n}}t}\right)^{H-\frac{1}{2}}-\left(\mathsmaller{\frac{i}{2^{n}}t}\right)\left(\mathsmaller{\frac{i+1}{2^{n}}t}\right)^{H-\frac{1}{2}}\right]\vphantom{\left(\frac{i}{2^{n}}\right)^{H}}\\
 & =c_{H}'s^{H-\frac{1}{2}}\left(\mathsmaller{t-\frac{i}{2^{n}}t}\right)^{H-\frac{1}{2}}+c_{H}'(t-s)^{H-\frac{1}{2}}\left(\mathsmaller{\frac{i+1}{2^{n}}t}\right)^{H-\frac{1}{2}}\vphantom{\left(\frac{i}{2^{n}}\right)^{H}}\\
 & \leq2c_{H}'s^{H-\frac{1}{2}}(t-s)^{H-\frac{1}{2}}.\vphantom{\left(\frac{i}{2^{n}}\right)^{H}}
\end{align*}
Therefore, when $H<\frac{1}{2}$, the control function is $2c_{H}'s^{H-\frac{1}{2}}(t-s)^{H-\frac{1}{2}}\mathds{1}_{(0,t]}(s)$,
which is an element of $L^{2}([0,\infty))$. 

On the other hand,
\[
u_{t}^{(n)}(s)=\frac{\int_{0}^{(i+1)2^{-n}t}u_{t}(r)dr-\int_{0}^{i2^{-n}t}u_{t}(r)dr}{2^{-n}t}\to u_{t}(s)
\]
as $n$ tends to infinity due to the continuity of $u_{t}(s)$ on
$(0,t)$. Now we may apply the dominated convergence theorem and conclude
that $u_{t}^{(n)}\to u_{t}$ in $L^{2}([0,\infty))$. 

For fixed $t\in[0,1]$, set
\begin{equation}
B_{t}^{(n)}(\omega)=\begin{cases}
\sum_{i=0}^{2^{n}-1}F_{i}^{t,(n)}\left(\omega_{(i+1)2^{-n}t}-\omega_{i2^{-n}t}\right), & 0<t\leq1,\\
0,\vphantom{\sum_{i=0}^{2^{n}-1}} & t=0.
\end{cases}\label{eq:Bn_martingale}
\end{equation}
Let $\mathscr{G}=(\mathscr{G}_{n})_{n\geq0}$, where $\mathcal{\mathscr{G}}_{n}=\sigma\left\{ \omega_{i2^{-n}t},0\leq i\leq2^{n}\right\} $
is the $\sigma$-algebra generated by $\omega_{i2^{-n}t}$'s, $0\leq i\leq2^{n}$.
Then $(B_{t}^{(n)})_{n\in\mathbb{N}}$ is a discrete martingale with
respect to this filtration $\mathcal{\mathscr{G}}$. This was observed
by Decreusefond and \"{U}st\"{u}nel\citep{Decreusefond1999}. 

We claim that $(B_{t}^{(n)})_{n\in\mathbb{N}}$ defined in (\ref{eq:Bn_martingale})
i \ref{lem:Malliavin deriv} is a discrete martingale with respect
to $\mathcal{\mathscr{G}}$, where $\mathscr{G}=(\mathscr{G}_{n})_{n\geq0}$,
the $\sigma$-algebra generated by $\omega_{i2^{-n}t}$'s, $0\leq i\leq2^{n}$.
The proof of this claim relies on the fact that for a standard Brownian
motion $\omega_{t}$ and any $0\leq t_{0}<t_{1}<\cdots<t_{n}$, 
\begin{equation}
\mathbb{E}\left[\omega_{t_{i}}\left|\omega_{t_{0}},\omega_{t_{1}},\cdots\omega_{t_{i-1}},\omega_{t_{i+1}},\omega_{t_{n}}\vphantom{\frac{t_{i+1}}{t_{i+1}}}\right.\right]=\frac{t_{i+1}-t_{i}}{t_{i+1}-t_{i-1}}\omega_{t_{i-1}}+\frac{t_{i}-t_{i-1}}{t_{i+1}-t_{i-1}}\omega_{t_{i+1}}.\label{eq:Middlepoint_con_exp}
\end{equation}
To verify (\ref{eq:Middlepoint_con_exp}), one only needs to spot
that for each $i$ and $n$, 
\[
X_{i}:=\omega_{t_{i}}-\frac{t_{i+1}-t_{i}}{t_{i+1}-t_{i-1}}\omega_{t_{i-1}}-\frac{t_{i}-t_{i-1}}{t_{i+1}-t_{i-1}}\omega_{t_{i+1}}
\]
is independent of $\sigma(\omega_{t_{0}},\omega_{t_{1}},\cdots\omega_{t_{i-1}},\omega_{t_{i+1}},\omega_{t_{n}})$.
Indeed, for any $0\leq j<i\leq n$,
\[
\begin{aligned}\mathbb{E}\left[X_{i}\omega_{t_{j}}\right] & =\mathbb{E}\left[\omega_{t_{i}}\omega_{t_{j}}\right]-\frac{t_{i+1}-t_{i}}{t_{i+1}-t_{i-1}}\mathbb{E}\left[\omega_{t_{i-1}}\omega_{t_{j}}\right]-\frac{t_{i}-t_{i-1}}{t_{i+1}-t_{i-1}}\mathbb{E}\left[\omega_{t_{i+1}}\omega_{t_{j}}\right]\\
 & =t_{j}-\frac{t_{i+1}-t_{i}}{t_{i+1}-t_{i-1}}t_{j}-\frac{t_{i}-t_{i-1}}{t_{i+1}-t_{i-1}}t_{j}\\
 & =0,\vphantom{\frac{t_{i+1}}{t_{i+1}}}
\end{aligned}
\]
and one may verify $X_{i}$ and $\omega_{t_{j}}$ are independent
via similar computation when $0<i<j\leq n$. Thus $\omega_{t_{i}}$
is independent of all linear combinations of $\omega_{t_{0}},\omega_{t_{1}},\cdots\omega_{t_{i-1}},\omega_{t_{i+1}},\omega_{t_{n}}$,
and hence $\sigma(\omega_{t_{0}},\omega_{t_{1}},\cdots\omega_{t_{i-1}},\omega_{t_{i+1}},\omega_{t_{n}})$.
Therefore, we get that
\[
\begin{aligned} & \hphantom{=\ \ }\mathbb{E}\left[\omega_{t_{i}}\left|\omega_{t_{0}},\omega_{t_{1}},\cdots\omega_{t_{i-1}},\omega_{t_{i+1}},\omega_{t_{n}}\vphantom{\frac{t_{i+1}}{t_{i+1}}}\right.\right]\\
 & =\mathbb{E}\left[X_{i}+\frac{t_{i+1}-t_{i}}{t_{i+1}-t_{i-1}}\omega_{t_{i-1}}+\frac{t_{i}-t_{i-1}}{t_{i+1}-t_{i-1}}\omega_{t_{i+1}}\left|\omega_{t_{0}},\omega_{t_{1}},\cdots\omega_{t_{i-1}},\omega_{t_{i+1}},\omega_{t_{n}}\vphantom{\frac{t_{i+1}}{t_{i+1}}}\right.\right]\\
 & =\frac{t_{i+1}-t_{i}}{t_{i+1}-t_{i-1}}\omega_{t_{i-1}}+\frac{t_{i}-t_{i-1}}{t_{i+1}-t_{i-1}}\omega_{t_{i+1}}.
\end{aligned}
\]
For each $1\leq i\leq2^{n}-1$, if $i$ is odd, then we may write
$i=2k+1$, $0\leq k\leq2^{-n+1}-1$, and thus by (\ref{eq:Middlepoint_con_exp}),
\[
\begin{aligned}\mathbb{E}\left[\omega_{(i+1)2^{-n}t}-\omega_{i2^{-n}t}|\mathcal{\mathscr{G}}_{n-1}\right] & =\mathbb{E}\left[\omega_{(k+1)2^{-n+1}t}-\omega_{(2k+1)2^{-n}t}|\mathcal{\mathscr{G}}_{n-1}\right]\\
 & =\frac{1}{2}\omega_{(k+1)2^{-n+1}t}-\frac{1}{2}\omega_{k2^{-n+1}t}.
\end{aligned}
\]
If $i$ is even, write $i=2k$ for $0\leq k\leq2^{-n+1}-1$, then
it holds that 
\[
\begin{aligned}\mathbb{E}\left[\omega_{(i+1)2^{-n}t}-\omega_{i2^{-n}t}|\mathscr{G}_{n-1}\right] & =\mathbb{E}\left[\omega_{(2k+1)2^{-n}t}-\omega_{k2^{-n+1}t}|\mathcal{\mathscr{G}}_{n-1}\right]\\
 & =\frac{1}{2}\omega_{(k+1)2^{-n+1}t}-\frac{1}{2}\omega_{k2^{-n+1}t}.
\end{aligned}
\]
Therefore, by the definition of $F_{i}^{t,(n)}$, we conclude that
\begin{align*}
\mathbb{E}\left[B_{t}^{(n)}\left|\mathcal{\mathscr{G}}_{n-1}\vphantom{B_{t}^{(n)}}\right.\right]= & \sum_{i=0}^{2^{n}-1}F_{i}^{t,(n)}\mathbb{E}\left[\omega_{(i+1)2^{-n}t}-\omega_{i2^{-n}t}\left|\mathcal{\mathscr{G}}_{n-1}\vphantom{B_{t}^{(n)}}\right.\right]\\
= & \sum_{k=0}^{2^{n-1}-1}F_{2k+1}^{t,(n)}\left(\frac{1}{2}\omega_{(k+1)2^{-n+1}t}-\frac{1}{2}\omega_{k2^{-n+1}t}\right)\\
 & +\sum_{k=0}^{2^{n-1}-1}F_{2k}^{t,(n)}\left(\frac{1}{2}\omega_{(k+1)2^{-n+1}t}-\frac{1}{2}\omega_{k2^{-n+1}t}\right)\\
= & \sum_{k=0}^{2^{n-1}-1}\frac{2^{-n+1}}{t}\left(\omega_{(k+1)2^{-n+1}t}-\omega_{k2^{-n+1}t}\right)\\
 & \cdot\left(\int_{k2^{-n+1}t}^{(2k+1)2^{-n}t}u_{t}(s)ds+\int_{(2k+1)2^{-n}t}^{(k+1)2^{-n+1}t}u_{t}(s)ds\right)\vphantom{\sum_{i=0}^{2^{n}-1}}\\
= & B_{t}^{(n-1)}.\vphantom{\sum_{i=0}^{2^{n}-1}}
\end{align*}
For $p\in(1,\infty)$, because the increments of $\omega_{t}$ over
different time intervals are independent, and $B_{t}^{(n)}$ is contained
in the first Wiener chaos, by (2.3) from Lemma 2.2 in \citep{Boedihardjo2016}
with $N=1$,
\begin{align*}
\big\Vert B_{t}^{(n)}\big\Vert_{p} & \leq2\sqrt{p-1}\big\Vert B_{t}^{(n)}\big\Vert_{2}\vphantom{\left(\sum_{i=1}^{2^{n}-1}\right)^{\frac{1}{2}}}\\
 & =2\sqrt{p-1}\left[\sum_{i=1}^{2^{n}-1}\left(\frac{2^{n}}{t}\right)^{2}\left(\int_{(i-1)2^{-n}t}^{i2^{-n}t}u_{t}(s)ds\right)^{2}\mathbb{E}\left[\left(\omega_{(i+1)2^{-n}}-\omega_{i2^{-n}t}\right)^{2}\right]\right]^{\frac{1}{2}}\\
 & =2\sqrt{p-1}\left[\sum_{i=1}^{2^{n}-1}\frac{2^{n}}{t}\left(\int_{(i-1)2^{-n}t}^{i2^{-n}t}u_{t}(s)ds\right)^{2}\right]^{\frac{1}{2}}\\
 & \leq2\sqrt{p-1}\left(\sum_{i=1}^{2^{n}-1}\int_{(i-1)2^{-n}t}^{i2^{-n}t}u_{t}^{2}(s)ds\right)^{\frac{1}{2}}\\
 & =2\sqrt{p-1}t^{H},\vphantom{\left(\sum_{i=1}^{2^{n}-1}\right)^{\frac{1}{2}}}
\end{align*}
and hence $\sup_{n\in\mathbb{N}}\mathbb{E}[|B_{t}^{(n)}|^{p}]<\infty$.
It thus follows from the martingale convergence theorem that $(B_{t}^{(n)})_{n\in\mathbb{N}}$
conver \ref{lem:Malliavin deriv} ige to $B_{t}$ in $L^{p}(\boldsymbol{W})$,
and $B_{t}$ is a Gaussian random variable with mean zero and covariance
given by 
\[
\begin{aligned}\lim_{n\to\infty}\mathbb{E}\left[B_{s}^{(n)}B_{t}^{(n)}\right] & =\lim_{n\to\infty}\mathbb{E}\left[\left(\int_{0}^{\infty}u_{t}^{(n)}(r)d\omega_{r}\right)\left(\int_{0}^{\infty}u_{s}^{(n)}(r)d\omega_{r}\right)\right]\\
 & =\lim_{n\to\infty}\int_{0}^{\infty}u_{t}^{(n)}(r)u_{s}^{(n)}(r)dr\\
 & =\int_{0}^{s\wedge t}K(t,r)K(s,r)dr\\
 & =R(s,t)\vphantom{\int_{0}^{s\wedge t}}
\end{aligned}
\]
for any $s,t>0$. In particular, the variance of $B_{t}$ is given
by $\lim_{n\to\infty}\mathbb{E}[|B_{t}^{(n)}|^{2}]=t^{2H}$.

Now by the definition of Malliavin derivative, for $t>0$,
\[
DB_{t}^{(n)}(s)=\int_{0}^{s}u_{t}^{(n)}(v)dv,
\]
and higher-order derivatives of $B_{t}^{(n)}$ all vanish. We have
already proved that $B_{t}^{(n)}\to B_{t}$ in $L^{p}(\boldsymbol{W})$
and $u_{t}^{(n)}\to u_{t}$ in $L^{2}([0,\infty))$, so for any $r\in\mathbb{N}$
and $p\in(1,\infty)$, as
\begin{align*}
\lVert B_{t}^{(n)}-B_{t}^{(m)}\rVert_{\mathbb{D}_{r}^{p}}= & \left(\mathbb{E}\left[|B_{t}^{(n)}-B_{t}^{(m)}|^{p}\right]+\mathbb{E}\left[\left|\lVert DB_{t}^{(n)}-DB_{t}^{(m)}\rVert_{\mathcal{H}}\right|^{p}\right]\right)^{1/p}\\
= & \left(\mathbb{E}\left[|B_{t}^{(n)}-B_{t}^{(m)}|^{p}\right]+\mathbb{E}\left[\left|\lVert u_{t}^{(n)}-u_{t}^{(m)}\rVert_{L^{2}([0,\infty))}\right|^{p}\right]\right)^{1/p},
\end{align*}
we obtain that $(B_{t}^{(n)})_{n\in\mathbb{N}}$ is Cauchy in $\mathbb{D}_{r}^{p}$.
By the completeness of $\mathbb{D}_{r}^{p}$, this sequence tends
to a limit random variable in $\mathbb{D}_{r}^{p}$ as $n$ goes to
infinity. Now by the definition of $\lVert\cdot\rVert_{\mathbb{D}_{r}^{p}}$,
this convergence implies convergence in $L^{p}(\boldsymbol{W})$,
and by the uniqueness of limit, this random variable must coincide
with $B_{t}$. Moreover, 
\[
DB_{t}(s)=\int_{0}^{s\wedge t}K(t,u)du,
\]
where $DB_{t}\in\mathcal{H}$ is the Malliavin derivative of $B_{t}$
with respect to Brownian motion, and its higher order Malliavin derivatives
all vanish. 

\bibliographystyle{apalike}

\end{document}